\newtheorem{theorem}{Theorem}[section]
\newtheorem{lemma}[theorem]{Lemma}
\newtheorem{corollary}[theorem]{Corollary}
\newtheorem{proposition}[theorem]{Proposition}
\theoremstyle{definition}
\newtheorem{definition}[theorem]{Definition}
\newtheorem{remark}[theorem]{Remark}
\newtheorem*{claim}{Claim}
\numberwithin{equation}{section}
\title[Krull--Gabriel dimension of Cohen--Macaulay modules]{Krull--Gabriel dimension of Cohen--Macaulay modules over hypersurfaces of countable Cohen--Macaulay representation type}
\subjclass[2010]{Primary  13C14;  Secondary 16G60.}
\date{\today}
\thanks{The author was partly supported by JSPS KAKENHI Grant Number 18K13399 and 21K03213.}
\author{Naoya Hiramatsu} 
\address{General Education Program, National Institute of Technology, Kure College, 2-2-11, Agaminami, Kure Hiroshima, 737-8506 Japan}
\email{hiramatsu@kure-nct.ac.jp} 
\begin{document}
%%%%%%%%%%%%%%%%%%%%%%%%%%%%%%%%%%
%%%%%%%%%%%%%%%%%%%%%%%%%%%%%%%%%%
%  Original notation
%%%%%%%%%%%%%%%%%%%%%%%%%%%%%%%%%%
%%%%%%%%%%%%%%%%%%%%%%%%%%%%%%%%%%
\def\AA{\mathcal A} 
\def\Ass{\mathrm Ass} 
\def\BB{\mathcal B} 
\def\CC{\mathcal C} 
\def\CMR{{\mathcal C}(R)} 
\def\CMRs{{\mathcal C} (R^{\sharp})}
\def\CMRsG{{\mathcal C} (R^{\sharp}\ast G)}
\def\CM{{\mathcal{C}}}
\def\Coker{\mathrm{Coker}}
\def\DD{\mathcal D} 
\def\depth{\mathrm{depth}}
\def\End{\mathrm{End}}
\def\Ext{\mathrm{Ext}}
\def\FF{\mathcal{F}}
\def\Hom{\mathrm{Hom}}
\def\HomR{\mathrm{Hom}_R}
\def\Im{\mathrm{Im}}
\def\Ind{\mathrm{Ind}}
\def\Ker{\mathrm{Ker}}
\def\KGdim{\mathrm{KGdim}\ }
\def\m{\mathfrak m}
\def\Mod{\mathrm{Mod}}
\def\mod{\mathrm{mod}}
\def\modC{{\mathrm{mod}}(\CMR )}
\def\PP{\mathcal P} 
\def\p{\mathfrak p}
\def\q{\mathfrak q}
\def\Rs{R^{\sharp }}
\def\Rss{R^{\sharp \sharp}}
\def\RsG{R^{\sharp } \ast G}
\def\Spec{\mathrm{Spec}} 
\def\SS{\mathcal S} 
\def\Ss{S^{\sharp }}
\def\sCM{\underline{{\mathcal C}}} 
\def\sCMR{\underline{{\mathcal C}}(R)} 
\def\sCMRs{\underline{{\mathcal C}}(\Rs)} 
\def\sEnd{\underline{\mathrm{End}}}
\def\sHom{\underline{\mathrm{Hom}}}
\def\sHomR{\underline{\mathrm{Hom}}_R}
\def\sHomRs{\underline{\mathrm{Hom}}_{\Rs}}
\def\smod{\underline{\mathrm{mod}}}
\def\smodC{\underline{\mathrm{mod}}(\CMR )}
\def\smodCs{\underline{\mathrm{mod}}(\CMRs )}
\def\smodCss{\underline{\mathrm{mod}}({\mathcal C} (R^{\sharp \sharp}) )}
\def\Supp{{\mathrm{Supp}}}
\def\syz{\mathrm{syz}}
\def\Z{\mathbb Z}
%%%%%%%%%%%%%%%%%% Abstract%%%%%%%%%%%%%%%%%
\begin{abstract}
We calculate the Krull--Gabriel dimension of the functor category of the (stable) category of maximal Cohen--Macaulay modules over hypersurfaces of countable Cohen--Macaulay representation type. 
We show that the Krull--Gabriel dimension is $0$ if the hypersurface is of finite Cohen--Macaulay representation type and that is $2$ if the hypersurface is of countable but not finite Cohen--Macaulay representation type.  
\end{abstract}
%%%%%%%%%%%%%%%%%%%%%%%%%%%%%%%%%%
%%%%%%%%%%%%%%%%%%%%%%%%%%%%%%%%%%
\maketitle

\section{Introduction}\label{sec1}
The notion of Krull--Gabriel dimension has been considered under a functorial approach viewpoint of representation theory of finite dimensional algebras. 
It was introduced by Gabriel\cite{Ga62} and has been studied by many authors including Geigle\cite{G85} and Schr\"oer\cite{S00}. 
The notion is considered for an abelian category and defined by a length of a certain filtration of Serre subcategories. 
See Section \ref{A} for the precise definition. 
Let $A$ be a finite dimensional algebra and $\mod (A)$ a category of finitely generated $A$-modules. 
The functor category $\mod (\mod A)$ of $\mod A$ is an abelian category, so that the Krull-Gabriel dimension of  $\mod (\mod (A))$, which is denoted by $\KGdim  \mod (\mod A)$, can be investigated.   
The Krull--Gabriel dimension is closely related to representation types of algebras. 
It was proved by Auslander\cite{A80} that $A$ is of finite representation type if and only if $\KGdim \mod (\mod (A)) = 0$. 
Krause\cite{Kr98} shows that there are no algebras such that $\KGdim \mod (\mod A) \not= 1$ and Geigle\cite{G85} shows that every tame hereditary algebra is of Krull--Gabriel dimension $2$. 
Geigle\cite{G85} also shows that an algebra which is of wild representation type has Krull--Gabriel dimension $\infty$. 

Let $R$ be a commutative Cohen--Macaulay local ring and $\CMR$ the category of maximal Cohen--Macaulay $R$-modules. 
In this paper, we study the Krull--Gabriel dimension of the functor category of $\CMR$. 
More precisely we calculate the Krull--Gabriel dimension of $\smodC$; the category of finitely presented contravariant additive functors $F$ with $F(R)=0$ from $\CMR$ to a category of abelian groups. 
First, we shall show the following theorem, which gives an analogy of a result due to Auslander.

\begin{theorem}\label{Z1}[Theorem \ref{A4}]
Let $R$ be a complete Cohen--Macaulay local ring. 
Then $R$ is of finite Cohen--Macaulay representation type if and only if $\KGdim \smodC = 0$. 
\end{theorem}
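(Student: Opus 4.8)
The plan is to prove both directions of the equivalence by characterizing when the stable functor category $\smodC$ collapses to Krull--Gabriel dimension $0$. Recall that $\KGdim$ is defined via a filtration of Serre subcategories, and dimension $0$ means that after quotienting out the finite-length objects once, nothing remains—equivalently, every object of $\smodC$ has finite length, so that $\smodC$ itself is a length category. The strategy is therefore to translate the Cohen--Macaulay representation-theoretic condition into this finiteness statement about the functor category, following the template of Auslander's theorem for finite-dimensional algebras.

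For the forward direction, I would assume $R$ is of finite Cohen--Macaulay representation type, so there are only finitely many indecomposable maximal Cohen--Macaulay modules up to isomorphism. Since $R$ is complete, the category $\CMR$ is then a Krull--Schmidt category with an additive generator $M$ (the direct sum of all indecomposables), and the stable endomorphism ring $\Lambda = \sEnd(M)$ is an Artin algebra. The key structural input is that $\smodC$ is equivalent to $\smod(\Lambda)$, the category of finitely presented modules over this (stable) Auslander algebra; this is the standard functor-category translation. Over an Artin algebra every finitely presented module has finite length, so every object of $\smodC$ has finite length as a functor, forcing $\KGdim \smodC = 0$.

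For the converse, I would argue contrapositively: if $R$ is \emph{not} of finite Cohen--Macaulay representation type, then $\CMR$ contains infinitely many nonisomorphic indecomposables, and I must produce an object of $\smodC$ that is \emph{not} of finite length, so that the dimension-$0$ Serre quotient is nontrivial and $\KGdim \smodC \geq 1$. The natural candidate is a simple functor $S_N = \sHomR(-, N)/\mathrm{rad}$ attached to an indecomposable $N$, or more precisely the representable functor $\sHomR(-, N)$ itself; the presence of infinitely many indecomposables should manifest as a representable (or finitely presented) functor admitting an infinite descending chain of subfunctors, exactly the failure of finite length. Concretely, infinitude of indecomposables yields, via almost split sequences in $\CMR$, an unbounded supply of simple functors, and one assembles these into a single finitely presented functor with no composition series.

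The main obstacle I expect is the converse direction, specifically the construction of a non-finite-length finitely presented functor from mere infinitude of indecomposables. In the finite-dimensional algebra setting this is Auslander's result, but the stable category $\sCMR$ and the modified hypothesis $F(R)=0$ require care: one must ensure the constructed functor lies in $\smodC$ (vanishes on $R$) and remains finitely presented, while its failure of finite length is genuinely detected by the Krull--Gabriel filtration. I would handle this by invoking the theory of almost split sequences in $\CMR$—which exist since $R$ is complete Cohen--Macaulay—to control the simple functors and their extensions, and by using the equivalence between $\smodC$ and a functor category over the stable Auslander algebra to import the finite-dimensional-algebra machinery. The delicate point is that finite length in $\smodC$ must be shown equivalent to finiteness of the indecomposable set, and I expect this equivalence to rest on a careful analysis of the radical filtration of representable functors.
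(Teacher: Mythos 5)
Your forward direction is essentially the paper's: the paper simply cites \cite[Chapter 13]{Y} for the fact that every object of $\smodC$ has finite length when $R$ has finite CM representation type, and your route through the additive generator $M$ and the stable Auslander algebra $\Lambda=\sEnd(M)$ is a valid way to see this (though you should note that $\Lambda$ being an \emph{Artin} algebra is not automatic: it uses that finite CM type forces $R$ to have an isolated singularity, so that $\sHomR(M,N)$ has finite length over $R$).

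The genuine gap is in the converse. You correctly reduce the problem to producing, from the infinitude of indecomposables, a single finitely presented functor in $\smodC$ of infinite length, and you flag this as ``the main obstacle'' --- but you never resolve it. The candidates you name do not work as stated: a representable functor $\sHomR(-,N)$ for one fixed indecomposable $N$ can perfectly well have finite length even when $\CMR$ has infinitely many indecomposables, since its support may be finite; and ``assembling'' infinitely many simple functors into one finitely presented functor is not possible in general, because a finitely presented functor is a quotient of a single representable and hence its support is controlled by one module. The Auslander-algebra machinery you propose to import is unavailable precisely in the case you need it, since without finite type there is no additive generator. The paper's proof supplies exactly the missing ingredient: by \cite[Lemma 2.1]{H17} there exists a single $X \in \CMR$ with $\sHomR(M,X) \neq 0$ for \emph{every} non-free indecomposable $M$, i.e.\ $\Supp(\sHomR(-,X)) \cup \{R\} = \Ind(\CMR)$. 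Since a finite-length object of $\smodC$ has finite support (each simple functor is supported at a single indecomposable), $\KGdim \smodC = 0$ applied to this one functor forces $\Ind(\CMR)$ to be finite. Without this ``stable cogenerator'' $X$, or some explicit substitute for it, your contrapositive argument does not close.
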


Let $k$ be an algebraically closed uncountable field of characteristic not two.
Next, we investigate the case when $R$ is a hypersurface that is of countable but not finite Cohen--Macaulay representation type. 
Namely $R$ is isomorphic to the ring $k[\![x_0 , x_1 , x_2 , \dots, x_n]\!]/(f)$, where $f$ is of the following:
$$
f=\begin{cases}
x_0 ^2+x_2^2+\cdots+x_n^2 & (A_\infty),\\
x_0^2x_1+x_2^2+\cdots+x_n^2 & (D_\infty).
\end{cases}
$$

\begin{theorem}\label{Z2}[Corollary \ref{C11}]
Let $k$ be an algebraically closed uncountable field of characteristic not two.
Let $R$ be a hypersurface of countable but not finite Cohen--Macaulay representation type. 
Then $\KGdim \smodC =2$. 
\end{theorem}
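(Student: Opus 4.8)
The plan is to compute the Serre-subcategory filtration $0=\AA_{-1}\subseteq\AA_0\subseteq\AA_1\subseteq\cdots$ that defines $\KGdim$ of $\AA:=\smodC$ and to show that it first exhausts $\AA$ at stage $2$. The lower bound $\KGdim\smodC\geq 1$ is immediate: since $R$ is not of finite Cohen--Macaulay representation type, Theorem \ref{Z1} gives $\AA_0\neq\AA$. To make the higher stages tractable I would first reduce the number of variables. A functor in $\smodC$ kills $R$, hence factors through the stable category, so $\AA\simeq\smod(\sCMR)$ depends only on $\sCMR$; since Kn\"orrer periodicity yields a triangle equivalence $\sCMR\simeq\sCM(\Rss)$, we get $\smodC\simeq\smodCss$ and $\KGdim\smodC$ is a Kn\"orrer invariant. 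Iterating, I may assume $R$ is one of the explicit one- or two-dimensional representatives, namely $k[\![x,y]\!]/(x^2)$ or $k[\![x,y,z]\!]/(x^2+z^2)$ in type $(A_\infty)$, and $k[\![x,y]\!]/(x^2y)$ or $k[\![x,y,z]\!]/(x^2y+z^2)$ in type $(D_\infty)$, for each of which the indecomposable maximal Cohen--Macaulay modules and the stable Auslander--Reiten quiver are completely known. (Equivalently, the preceding results relate $\smodC$ to the functor category over the skew group category $\CMRsG$ of the branched double cover, which one may use in place of the explicit quiver.)

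Next I would pin down the simple objects and the bottom of the filtration. As $\CMR$ is Krull--Schmidt, the simple objects of $\AA$ are exactly the functors $S_M=\sHom(-,M)/\mathrm{rad}(-,M)$ indexed by the non-free indecomposable modules $M$, and $\AA_0$ consists of the finitely presented functors admitting a finite composition series, i.e. those built from finitely many of the $S_M$. For each such $M$ the almost split sequence $0\to\tau M\to E\to M\to 0$ furnishes the minimal projective presentation $\sHom(-,\tau M)\to\sHom(-,E)\to\sHom(-,M)\to S_M\to 0$, so the homological position of $S_M$ is governed by the position of $M$ in the Auslander--Reiten quiver. Stably this quiver is, in each case, a single component of type $A_\infty$ (a ray, respectively a $\mathbb{Z}A_\infty$ component for the two-dimensional representatives), with one additional branch vertex in type $(D_\infty)$, indexed by the countably many non-free indecomposables $M_1,M_2,\dots$.

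The core of the argument is the computation of $\AA_1$ and of the quotient $\AA/\AA_1$. Reading the presentations along the ray, the simple functors $S_{M_1},S_{M_2},\dots$ form an infinite chain under the irreducible maps; I would use this to produce a finitely presented functor that reflects the generic (Pr\"ufer/adic-type) behaviour of the whole ray and is of infinite length modulo $\AA_0$, showing $\AA_1\neq\AA$ and hence $\KGdim\smodC\geq 2$. For the reverse inequality I would identify the finite-length objects of $\AA/\AA_0$: the almost split sequences let one telescope the ray, and I expect $\AA_1/\AA_0$ to be generated by a single such generic functor per component, so that $\AA/\AA_1$ has only finitely many simple objects and every object of $\AA/\AA_1$ has finite length. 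That yields $\AA_2=\AA$ and $\KGdim\smodC\leq 2$, completing the equality. The $(D_\infty)$ case follows the same scheme after accounting for the branch vertex, which contributes only finitely much extra data and does not change the stabilization stage.

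The step I expect to be the main obstacle is the upper bound $\AA_2=\AA$: showing that \emph{every} finitely presented functor becomes of finite length after dividing out $\AA_1$. This requires controlling arbitrary projective presentations $\sHom(-,N)\to\sHom(-,M)\to F\to 0$ uniformly along the infinite component, i.e. proving that the only accumulation of composition factors is the single generic direction already absorbed into $\AA_1$. Equivalently, it is the assertion that these categories are genuinely tame/domestic rather than wild, so that the Cantor--Bendixson rank of the associated Ziegler spectrum is exactly $2$; this is where the precise, finite shape of the $A_\infty$/$D_\infty$ Auslander--Reiten quiver, rather than merely its countability, is essential.
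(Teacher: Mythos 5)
Your skeleton agrees with the paper's: reduce via Kn\"orrer periodicity (the paper's Theorem \ref{C10}) to the explicit representatives $k[\![x,y]\!]/(x^2)$ and $k[\![x,y,z]\!]/(x^2y+z^2)$, then exhibit one ``generic'' object of infinite length modulo $\AA_0$ for the lower bound and show everything has finite length modulo $\AA_1$ for the upper bound. But two substantive points are missing or wrong. First, your description of the simple objects is incorrect in exactly the place that matters: the simples of $\smodC$ are the functors arising from AR sequences, and these exist only for indecomposables in $\CM_0(R)$. The module $I=\Coker(x)$ (resp.\ its analogue in type $(D_\infty)$) is \emph{not} locally free on the punctured spectrum, admits no AR sequence, and $\sHomR(-,I)/\mathrm{rad}(-,I)$ is not a finitely presented simple. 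This is not a technicality: $\sHomR(-,I)$ is precisely the object the paper uses to witness $\KGdim\geq 2$ (via the exact sequences $0\to I\to I_n\to I\to 0$ of Araya--Iima--Takahashi, which give $2\,\ell(\sHomR(-,I))\geq\ell(\sHomR(-,I_n))=2n$ in $\smodC/\smodC_0$ for all $n$), and the computation $\ell(\sHomR(-,I_n))=2n$ itself requires constructing the simple object $H_1$ of $\smodC/\smodC_0$ from the non-split sequence $0\to I_1\to I\oplus R\to I\to 0$ and chasing the AR sequences along the ray. You gesture at this but do not produce the functor or the length estimate.

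Second, and more seriously, the upper bound $\AA_2=\AA$ --- which you yourself identify as ``the main obstacle'' --- is not proved but only conjectured (``I expect $\AA_1/\AA_0$ to be generated by a single generic functor per component''), and the substitute you offer, tameness and the Cantor--Bendixson rank of the Ziegler spectrum, is not available here: the paper explicitly notes that the Puninski and Los--Puninski computations concern the definable category of MCM modules inside $\Mod(R)$, a different category from $\smodC$, so their rank computations cannot simply be imported. The paper closes this gap concretely: it shows $\sHomR(-,I)$ is \emph{simple} in $\smodC/\smodC_1$ (Proposition \ref{B8}) by reducing any subfunctor, via the pullback trick of Lemma \ref{B7}, to a cokernel presented by a morphism $\varphi\in\sHomR(I^{\oplus l},I)$, using $\sEnd_R(I)\cong k[\![y]\!]$ to compute the mapping cone $C(\varphi)\cong I_{n_1}\oplus I^{\oplus l-1}$, splitting off $I^{\oplus l-1}$ because the entries of $\varphi$ are non-zerodivisors on $\sEnd_R(I)$, and concluding that the quotient embeds in $\sHomR(-,I_{n_1})\in\smodC_1$. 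Since every $F\in\smodC$ is a quotient of a representable $\sHomR(-,X)$ with $X$ a finite sum of $I$'s and $I_n$'s, this yields $\AA_2=\AA$. Without this (or an equivalent) argument your proposal establishes only $\KGdim\smodC\geq 1$.
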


The studies of Krull--Gabriel dimension of maximal Cohen--Macaulay modules over 1-dimensional hypersurfaces of type $(A_{\infty})$ and $(D_{\infty})$ are given by Puninski\cite{P18} and by Los and Puninski\cite{LP19}. 
Their studies investigate the Krull--Gabriel dimension of the definable category of maximal Cohen--Macaulay modules in the category of all $R$-modules so that our studies are different from theirs. 

The organization of this paper is as follows.
In Section \ref{A}, we review the notion of Krull--Gabriel dimension and consider the case when $R$ is of finite Cohen--Macaulay representation type (Theorem \ref{A4}).
Section \ref{B} is devoted to compute the Krull--Gabriel dimension of $\smodC$ where $R$ is a 1-dimensional hypersurface singularities of type ($A_{\infty} $). 
In Section \ref{D}, we calculate the Krull--Gabriel dimension over a 2-dimensional hypersurface of type ($D_{\infty} $).
In Section \ref{C} we investigate how Krull--Gabriel dimension changes with Kn\"orrer's periodicity (Theorem \ref{C10}). 
Using it we attempt to compute the Krull--Gabriel dimension for higher (or lower) dimensional cases (Corollary \ref{C11}).

%%%%%%%%%%%%%%%%%%%%%%%%%%%%%%%%%%%%%%%%%
%%%%%%%%%%%%%%%%%%%%%%%%%%%%%%%%%%%%%%%%%
%%%%%%%%%%%%%%%%%%%%%%%%%%%%%%%%%%%%%%%%%
\section{Preliminaries}\label{A}

Let us recall the definition of Krull--Gabriel dimension for an abelian category. 
Let $\AA$ be an abelian category. 
We say that a full subcategory $\SS$ of $\AA$ is a Serre subcategory if $\SS$ is closed under taking subobjects, quotients, and extensions.

\begin{definition}\cite[Definition 2.1]{G85}\label{A1}
Let $\AA$ be an abelian category. 
Define $\AA_{-1} = 0$. 
For each $n \geq 0$, let $\AA _{n}$ be the category of all objects which are finite length in $\AA/\AA_{n-1}$. 
We define $\KGdim \AA = min \{ n \ \verb+|+ \ \AA = \AA _n \}$ if such a minimum exists, and $\KGdim \AA = \infty $ else. 
For an object $X$ in $\AA$, we define $\KGdim X$ by a minimum number $n$ such that $X$ is in $\AA_n$. 
\end{definition}

To show a simpleness of an object in a quotient category, the following lemma is useful.

\begin{lemma}\cite[Lemma 1.1]{GR74}\label{A2}
Let $\AA$ be an abelian category and $\SS$ the Serre subcategory.  
For an exact sequence $0 \to N \to M \to L \to 0$ in $\AA / \SS$, there is an exact sequence $0 \to N' \to M \to L' \to 0$ in $\AA$ such that $N \cong N'$ and $L \cong L'$ in $\AA/ \SS$. 
Therefore the object $X$ of $\AA$ becomes simple in $\AA/ \SS$ if $X$ is not an object of $\SS$ and if for each subobject $V$ of $X$ either $V$ or $X/V$ belongs to $\SS$. 
\end{lemma}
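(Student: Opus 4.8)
The plan is to rely on the standard structure of the Serre quotient $\AA/\SS$. Recall that $\AA/\SS$ has the same objects as $\AA$, that the canonical functor $Q\colon \AA \to \AA/\SS$ is exact, that $Q(Z)=0$ holds exactly when $Z$ lies in $\SS$, and that $Q(g)$ is an isomorphism exactly when $g$ is an $\SS$-isomorphism, i.e. a morphism of $\AA$ whose kernel and cokernel belong to $\SS$. The class $\Sigma$ of such $\SS$-isomorphisms admits a calculus of (left) fractions, so every morphism of $\AA/\SS$ can be written in the form $Q(s)^{-1}Q(f)$ with $f$ a morphism of $\AA$ and $s \in \Sigma$. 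The idea for the first assertion is to pull the epimorphism $M \to L$ back to an honest morphism out of $M$ inside $\AA$ and then take its kernel and image there; the point is that the middle term $M$ is left untouched.

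Concretely, given the short exact sequence $0 \to N \xrightarrow{i} M \xrightarrow{p} L \to 0$ in $\AA/\SS$, I would first write $p = Q(s)^{-1}Q(f)$ with $f\colon M \to C$ a morphism of $\AA$ and $s\colon L \to C$ in $\Sigma$. Then $Q(f) = Q(s)\circ p$ is the composite of an isomorphism with an epimorphism, hence an epimorphism in $\AA/\SS$. Taking the image factorization of $f$ in $\AA$ produces a short exact sequence
\[
0 \longrightarrow N' \longrightarrow M \longrightarrow L' \longrightarrow 0
\]
in $\AA$ with $N' = \Ker f$ and $L' = \Im f$. Applying the exact functor $Q$ I would check two things: since $Q(f)$ is epi, its image $Q(L')$ is all of $Q(C)$, and $Q(C) \cong L$ because $s \in \Sigma$, whence $L' \cong L$ in $\AA/\SS$; and since $Q(s)$ is a monomorphism, $Q(N') = \Ker Q(f) = \Ker p = N$, whence $N' \cong N$ in $\AA/\SS$. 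This is precisely the required sequence.

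For the second assertion I would argue as follows. Assume $X \notin \SS$, so $X \neq 0$ in $\AA/\SS$, and that every subobject $V$ of $X$ in $\AA$ satisfies $V \in \SS$ or $X/V \in \SS$. Let $N \hookrightarrow X$ be an arbitrary subobject in $\AA/\SS$, with cokernel $L$, so that $0 \to N \to X \to L \to 0$ is exact there. Applying the first part with $M = X$ yields an exact sequence $0 \to N' \to X \to L' \to 0$ in $\AA$ with $N' \cong N$ and $L' \cong L$ in $\AA/\SS$; here $V := N'$ is a genuine subobject of $X$ in $\AA$ and $X/V \cong L'$. If $V \in \SS$ then $N \cong Q(V) = 0$, so the subobject $N$ is zero in $\AA/\SS$; if $X/V \in \SS$ then $L \cong Q(X/V) = 0$, forcing $N = X$. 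Thus every subobject of $X$ in $\AA/\SS$ is either $0$ or $X$, and as $X \neq 0$ this means $X$ is simple.

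The only genuine input, which I would cite rather than reprove, is that $\Sigma$ admits a calculus of fractions and that $Q$ is exact with kernel $\SS$; these are the foundational properties of Gabriel quotients. Granting them, the step that must be handled with care is the image--kernel factorization in $\AA$ together with the verification that $Q$ sends it to a sequence isomorphic to the given one, after which the deduction of simplicity is a purely formal manipulation of subobjects and quotients in $\AA/\SS$.
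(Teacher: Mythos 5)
Your proof is correct. Note that the paper offers no proof of this lemma at all --- it is quoted from Gordon--Robson \cite[Lemma 1.1]{GR74} --- so there is nothing to diverge from; what you give is the standard argument via the Gabriel quotient. The two points that carry the weight are exactly the ones you isolate: that every morphism of $\AA/\SS$ admits a left-fraction presentation $p=Q(s)^{-1}Q(f)$ (valid because the class of morphisms with kernel and cokernel in $\SS$ satisfies the Ore conditions), and that exactness of $Q$ turns the kernel--image factorization of $f$ in $\AA$ into a sequence isomorphic to the given one; the deduction of simplicity from the first part is then purely formal, as you say.
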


Let $R$ be a commutative Noetherian ring with a finite Krull dimension. 
We denote by $\mod (R)$ a category of finitely generated $R$-modules with $R$-homomorphisms. 
We compute the Krull--Gabriel dimension of $\mod (R)$.

\begin{lemma}\label{A8}
Let $\AA$ be an abelian category and $\SS$, $\SS '$ the Serre subcategories with $\SS ' \subseteq \SS$. 
Suppose that $M \cong N$ in $\AA / \SS '$. 
Then $M \in \SS$ if and only if $N \in \SS$. 
\end{lemma}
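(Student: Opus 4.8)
The plan is to reduce the isomorphism in the quotient category to honest data in $\AA$ and then to run a short diagram chase using only the defining closure properties of the Serre subcategory $\SS$. Since the hypothesis and the conclusion are symmetric in $M$ and $N$, it suffices to prove one implication, say that $N \in \SS$ forces $M \in \SS$.

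First I would unwind the construction of $\AA/\SS'$ in order to realize the isomorphism $M \cong N$ by genuine subobjects. Recalling that $\Hom_{\AA/\SS'}(M,N)$ is the direct limit of the groups $\Hom(M_1, N/N_0)$ taken over subobjects $M_1 \subseteq M$ with $M/M_1 \in \SS'$ and $N_0 \subseteq N$ with $N_0 \in \SS'$, the given isomorphism is represented by an honest morphism $\bar f \colon M_1 \to N/N_0$ whose kernel and cokernel lie in $\SS'$. Writing $M_0 = \Ker \bar f$ and letting $N_1 \subseteq N$ be the preimage of $\Im \bar f$ under the projection $N \to N/N_0$, the first isomorphism theorem yields an honest isomorphism $M_1/M_0 \cong N_1/N_0$ in $\AA$, where all four ``defect'' objects $M_0$, $M/M_1$, $N_0$, and $N/N_1$ belong to $\SS'$. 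Alternatively, this realization can be bootstrapped from Lemma \ref{A2}, which converts a short exact sequence in the quotient into one in $\AA$.

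With this in hand the conclusion is a formal consequence of $\SS' \subseteq \SS$ and the Serre axioms. Assuming $N \in \SS$, closure under subobjects gives $N_1 \in \SS$ and closure under quotients gives $N_1/N_0 \in \SS$; transporting along the isomorphism yields $M_1/M_0 \in \SS$. Because $\SS' \subseteq \SS$ we have $M_0 \in \SS$, so closure under extensions applied to $0 \to M_0 \to M_1 \to M_1/M_0 \to 0$ gives $M_1 \in \SS$. Finally $M/M_1 \in \SS' \subseteq \SS$, and closure under extensions applied to $0 \to M_1 \to M \to M/M_1 \to 0$ gives $M \in \SS$, as desired; exchanging the roles of $M$ and $N$ proves the converse.

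The only delicate point is the first step: faithfully representing an abstract isomorphism of the quotient category by an honest map between a large subobject of $M$ and a quotient of $N$, and then extracting from it the common subquotient $M_1/M_0 \cong N_1/N_0$ together with the membership of the four defect terms in $\SS'$. Once the isomorphism is presented in this concrete subquotient form, the inclusion $\SS' \subseteq \SS$ is exactly what is needed to absorb the defect terms, and no further structure of $\AA$ enters the argument.
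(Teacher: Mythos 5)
Your proposal is correct and follows essentially the same route as the paper: both realize the isomorphism in $\AA/\SS'$ by an honest morphism $M_1 \to N/N_0$ with all defect objects (kernel, cokernel, $M/M_1$, $N_0$) in $\SS'$, and then absorb these defects into $\SS$ using $\SS' \subseteq \SS$ and the Serre closure axioms. Your version merely makes the common subquotient $M_1/M_0 \cong N_1/N_0$ explicit where the paper works directly with the pseudo-isomorphism, citing Gabriel; the substance is identical.
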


\begin{proof}
Since $M \cong N$ in $\AA / \SS '$, there is a morphism $f \in \Hom _{\AA} (M' , N/N')$ where $M/M'$, $N' \in \SS '$. 
Then $f$ is a pseudo-isomorphism in $\AA$, that is $\Ker f$ and $\Coker f$ belong to $\SS '$ (cf. \cite[Lemma 4, p. 367]{Ga62}). 
Let $N$ belong to $\SS$. 
The quotient module $N/N'$ belongs to $\SS$. 
One can also show that $\Ker f$ and $\Coker f$ belong to $\SS$ since $\SS ' \subseteq \SS$.
Thus $M'$ is contained in $\SS$. 
Since $M/M'$ belongs to $\SS$ we have $M \in \SS$. 
The converse holds by the same argument. 
\end{proof}

\begin{proposition}\label{A3}
Let $R$ be a commutative Noetherian ring with a finite Krull dimension. 
Then $\KGdim \mod (R) = \dim R$. 
\end{proposition}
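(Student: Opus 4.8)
The plan is to identify the Krull--Gabriel filtration on $\mod(R)$ with the dimension filtration by support. For each $n \geq -1$ set
$$
\SS_n = \{ M \in \mod(R) \ \verb+|+ \ \dim \Supp M \leq n \},
$$
so that $\SS_{-1} = 0$. Each $\SS_n$ is a Serre subcategory, since the support of a submodule or quotient is contained in that of the ambient module, and the support of an extension is the union of the supports. I would then prove by induction on $n$ that $\mod(R)_n = \SS_n$, where $\mod(R)_n$ denotes the category of Definition \ref{A1}. The base case $n=0$ holds because a module of finite length in $\mod(R) = \mod(R)/\SS_{-1}$ is precisely a module of finite length, i.e. one with zero-dimensional support. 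Assuming $\mod(R)_{n-1} = \SS_{n-1}$, the inductive step amounts to showing that the objects of finite length in $\mod(R)/\SS_{n-1}$ are exactly those of $\SS_n$.

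For the inclusion $\SS_n \subseteq \mod(R)_n$, I would first check, via Lemma \ref{A2}, that $R/\p$ is \emph{simple} in $\mod(R)/\SS_{n-1}$ whenever $\dim R/\p = n$: for a nonzero submodule $V = I/\p$ of the domain $R/\p$ one has $(R/\p)/V = R/I$ with $\dim R/I \leq n-1$, so $(R/\p)/V \in \SS_{n-1}$, while the zero submodule lies in $\SS_{n-1}$ trivially. Now take any $M \in \SS_n$ and a prime filtration $0 = M_0 \subset M_1 \subset \cdots \subset M_r = M$ with $M_i/M_{i-1} \cong R/\p_i$, $\p_i \in \Supp M$. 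Each factor with $\dim R/\p_i < n$ lies in $\SS_{n-1}$, hence vanishes in the quotient, and each factor with $\dim R/\p_i = n$ becomes simple there; as the localization functor is exact, the image of $M$ acquires a finite filtration with simple (or zero) factors and is therefore of finite length.

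For the reverse inclusion $\mod(R)_n \subseteq \SS_n$, I argue contrapositively: if $\dim \Supp M \geq n+1$, I produce an infinite strictly descending chain of subobjects in $\mod(R)/\SS_{n-1}$, showing $M$ has infinite length. Pick a minimal prime $\p$ of $\Supp M$ with $d := \dim R/\p = \dim \Supp M \geq n+1$; then $\p \in \Ass M$, giving an embedding $R/\p \hookrightarrow M$, so it suffices to treat $S := R/\p$. Choosing a maximal prime chain $0 = \q_0 \subsetneq \cdots \subsetneq \q_d$ in $S$ and a nonzero $x \in \q_1$, the principal ideal theorem together with the surviving chain yields $\dim S/xS = d-1 \geq n$. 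Since $S$ is a domain, multiplication by $x$ is injective, so $x^i S / x^{i+1} S \cong S/xS$ for all $i$; these factors have dimension $d-1 > n-1$ and hence are nonzero in $\mod(R)/\SS_{n-1}$. Thus $\overline{S} \supsetneq \overline{xS} \supsetneq \overline{x^2 S} \supsetneq \cdots$ is an infinite strictly descending chain, so $\overline{S}$, and therefore $\overline{M}$, has infinite length. This establishes $\mod(R)_n = \SS_n$, and since $\SS_n = \mod(R)$ if and only if every module (in particular $R$) has support dimension $\leq n$, the least such $n$ is $\dim R$, which is finite by hypothesis; hence $\KGdim \mod(R) = \dim R$. \textbf{The main obstacle} is this last inclusion: securing an element $x$ whose quotient $S/xS$ drops the dimension by exactly one, so that all successive quotients remain outside $\SS_{n-1}$ and the descending chain stays strict in the quotient category.
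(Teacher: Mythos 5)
Your proof is correct. It shares the paper's overall strategy --- identifying $\mod (R)_n$ with the Serre subcategory $\SS_n$ of modules of dimension at most $n$ by induction --- and the inclusion $\SS_n \subseteq \mod (R)_n$ is handled essentially as in the paper (prime filtration plus simplicity of $R/\p$ in $\mod (R)/\SS_{n-1}$ when $\dim R/\p = n$; your justification that any proper quotient $R/I$ of $R/\p$ satisfies $\dim R/I \leq n-1$ is a valid and slightly more direct substitute for the paper's localization-at-$\p$ argument). The genuine difference is in the reverse inclusion $\mod (R)_n \subseteq \SS_n$. The paper obtains it by classifying the simple objects of $\mod (R)/\mod (R)_{n-1}$, showing each is isomorphic in the quotient to some $R/\p$ with $\dim R/\p = n$; this relies on the characterization of $\SS_n$ by associated primes from \cite{T08} and on Lemma \ref{A8} to transfer membership in $\SS_n$ across isomorphisms in the quotient category. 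You instead argue contrapositively: if $\dim M > n$, then $M$ contains $S = R/\p$ for a minimal (hence associated) prime $\p$ of $\Supp (M)$ with $\dim S = \dim M$, and for a nonzero $x$ in the first nonzero prime of a dimension-realizing chain the subobjects $\overline{x^iS}$ form an infinite strictly descending chain in $\mod (R)/\SS_{n-1}$, because each factor $x^iS/x^{i+1}S \cong S/xS$ has dimension $\dim S - 1 \geq n$ and so survives in the quotient; hence $M$ has infinite length there. This is a clean, self-contained alternative that avoids both \cite{T08} and Lemma \ref{A8}; what it does not give you is the by-product the paper records along the way, namely the complete description of the simple objects of each quotient $\mod (R)/\mod (R)_{n-1}$.
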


\begin{proof}
We denote by $\SS_i$ the subcategory of $\mod (R)$ consisting of all finitely generated $R$-modules $M$ with $\dim M \leq i$. 
Notice that $\SS _i$ is a Serre subcategory (see \cite[Example 4.5.(9)]{T08}). 
We shall show $R/\p$ is a simple object in $\mod (R) / \mod (R) _{i-1}$ for a prime ideal $\p$ with $\dim R/\p = i$ and $\mod (R)_i = \SS _i$. 
We prove it by induction on $i$. 
First, for a maximal ideal $\m$, $R/\m$ is a field, so that it is simple in $\mod (R)$. 
We also remark that a finitely generated $R$-module $M$ has finite length iff $\dim M = 0$. 
Thus $\mod (R) _0 = \SS_0$. 
Let $i>0$ and $\p$ be a prime ideal $\p$ with $\dim R/\p = i$. 
We consider an exact sequence $0 \to V \to R/\p \to C \to 0$ in $\mod (R)$. 
Localizing by $\p$, we have $0 \to V_{\p} \to ( R/\p )_{\p} \to C_{\p} \to 0$. 
Since $( R/\p )_{\p}$ is a field, $V_{\p}$ or $C_{\p} = 0$. 
One has $\Ass (V) \subseteq \Ass (R/\p ) = \{ \p \}$, so that $V_{\p} \not=0$. 
Thus we have $C_{\p} =0$. 
Let $\q \in \Supp (C)$. 
Then $(R/ \p)_{\q} \not=0$, thus $\p \subseteq \q$. 
Since $C_{\p}=0$, $\mathrm{ann} (C) \not \subseteq \p$. 
Hence $\p \subsetneq \q$. 
It yields that $\dim C < \dim R/\p =i$, so that $C \in \SS_{i-1}$. 
By the induction hypothesis, one has $\SS_{i-1} = \mod (R) _{i-1}$. 
Consequently, $R/\p$ is simple in $\mod (R)/ \mod (R)_{i-1}$ by Lemma \ref{A2}. 
Suppose that $S$ is a simple object in $\mod (R) / \mod (R) _{i-1}$. 
According to \cite[Theorem 4.1, Example 4.5.(9)]{T08}, a finitely generated $R$-module $M$ belongs to $\SS_i$ iff $\Ass (M)$ is contained in $\PP_i = \{ \p \in \Spec (R) \ \verb+|+\ \dim R/\p \leq i \}$. 
Since $S$ is not in $\mod (R) _{i-1}$, $\Ass (S)$ is not contained in $\PP_{i-1}$. 
For a prime ideal $\p$ in $\Ass (S)$, $S$ has a submodule that is isomorphic to $R/\p$. 
If the prime ideal $\p$ satisfies $\dim R/\p = i$, $S$ is isomorphic to $R/\p$ in $\mod (R) / \mod (R) _{i-1}$ for the simpleness of $S$. 
Assume that a prime ideal $\p$ with $\dim R/\p > i$ belong to $\Ass (S)$. 
Then $R/\p $ is also isomorphic to $S$ in $\mod (R) / \mod (R) _{i-1}$ since $R/\p \not \in \SS _{i-1} = \mod (R) _{i-1}$. 
However, since $\dim R/\p > i$, there exists a prime ideal $\q$ such that $\p \subset \q$ and $\dim R/ \q = i$. 
Moreover one has the exact sequence in $\mod (R)$: $0 \to \q/\p \to R/\p \to R/\q \to 0$. 
Notice that $\Ass (\q/\p ) \subseteq \Ass (R /\p)$. 
Particularly $\q/\p$ is not in $\mod (R) _{i-1}$. 
This concludes that $R/\p$, hence $S$, is not simple in $\mod (R) / \mod (R) _{i-1}$, and it is a contradiction. 
The observation shows that every simple object in $\mod (R) / \mod (R) _{i-1}$ is isomorphic to $R/\p$ with $\dim R/\p = i$.

Let $M$ be a finitely generated $R$-module. 
According to \cite[Theorem 6.4]{Ma}, we have a filtration of $M$ in $\mod (R)$
\begin{equation}\label{A3-1}
0= M_0 \subset M_1 \subset \cdots \subset M_n = M
\end{equation}
such that $M_k / M_{k-1} \cong R/\p _k$ with prime ideals $\p_k$. 
Suppose that $\dim M = i$. 
Then $\dim R/\p_k \leq i$ for the prime ideals $\p_k$ since $\mathrm{Ass}(M) \subseteq \{ \p_1, \cdots , \p_n \}$. 
Since $R/\p$ with $\dim R/\p = i$ is a simple object in $\mod (R)/ \mod (R)_{i-1}$, $M$ belongs to $\mod (R)_{i}$. 
This shows that $\SS_i \subseteq \mod (R)_{i}$. 
Conversely, for each $M \in \mod (R)_{i}$, we have a filtration $0= M_0 \subset M_1 \subset \cdots \subset M_n = M$ in $\mod (R)/ \mod (R)_{i-1}$ such that $M_k / M_{k-1} \cong R/\p _k$ in $\mod (R)/ \mod (R)_{i-1}$ with $\dim R/ \p_k = i$. 
By Lemma \ref{A8}, $M_k / M_{k-1}$ belongs to $\SS_{i}$ since $R/ \p_k$ is in $\SS_{i}$. 
Hence $M$ belongs to $\SS_{i}$. 
Consequently $\mod (R)_i = \SS _i$.

For each $M \in \mod (R)$, one has $\KGdim M \leq max \{ \dim R/\p_k \verb+|+ k= 1, \cdots n\}$ by the filtration (\ref{A3-1}). 
Thus $\KGdim \mod (R) \leq sup \{\dim R/\p \verb+|+ \p \in \mathrm{Spec} R \} \leq  \dim R$. 
On the other hand, take a minimal associated prime ideal $\p$ of $R$, then $\dim R/\p = \dim R$, so that $\dim R \leq \KGdim \mod (R)$. 
Therefore we obtain $\KGdim \mod (R) = \dim R$. 
\end{proof}

From now we focus on a category of maximal Cohen--Macaulay (abbr. MCM) modules. 
In the rest of the paper we always assume that $(R, \m)$ is a complete CM local ring. 
We denote by $\CMR$ the full subcategory of $\mod (R)$ consisting of all MCM $R$-modules and by $\CM_0 (R)$ the full subcategory of $\CMR$ consisting of all modules that are locally free on the punctured spectrum of $R$.

Now let us recall the full subcategory of the functor category of $\CMR$ which is called the Auslander category. 
We give a brief review of the Auslander category. 
See \cite[Chapter 4 and 13]{Y} for the details. 
The Auslander category $\mod( \CMR )$ is the category whose objects are finitely presented contravariant additive functors from $\CMR$ to a category of abelian groups and whose morphisms are natural transformations between functors. 
We denote by $\smodC$ the full subcategory $\mod( \CMR )$ consisting of functors $F$ with $F(R )= 0$. 
Note that every object $F \in \smodC$ is obtained from a short exact sequence in $\CMR$. 
Namely, we have the short exact sequence $0 \to N \to M \to L \to 0$ such that 
$$
0 \to \Hom _R (\  , N) \to \Hom _R (\  , M) \to \Hom _R (\  , L) \to F \to 0 
$$
is exact in $\mod( \CMR )$.

We denote by $\sCMR$ the stable category of $\CMR$. 
The objects of $\sCMR$ are the same as those of $\CMR$, and the morphisms of $\sCMR$ are elements of $\sHomR (M, N) = \Hom _R(M, N)/P(M, N)$ for $M, N \in \sCMR$, where $P(M, N)$ denote the set of morphisms from $M$ to $N$ factoring through free $R$-modules. 
For a finitely generated $R$-module $M$, we denote by $\syz _{R} ^1 (M)$ the reduced first syzygy of $M$.

\begin{remark}\label{A5}
\begin{enumerate}[\rm(1)]
\item Since $R$ is complete, $\CMR$, thus $\sCMR$, is a Krull-Schmidt category. 
\item The categories $\modC$ and $\smodC$ are abelian categories (cf. \cite[(4.17), (4.19)]{Y}). 
\item The category $\smodC$ is equivalent to the Auslander category $\mod (\sCMR )$ of $\sCMR$. 
See \cite[Remark 2.6]{Y05}. 
Moreover, according to \cite[Remark 4.16]{Y05}, for $F \in \modC$ with $0 \to \Hom _R (\  , N) \to \Hom _R (\  , M) \to \Hom _R (\  , L) \to F \to 0$, we have an exact sequence $\sHom _R (\  , N) \to \sHom _R (\  , M) \to \sHom _R (\  , L) \to F \to 0$. 
\item If $R$ is Gorenstein the stable category $\sCMR$ has a structure of a triangulated category with the suspension (shift) functor defined by $(-)[-1] = \syz _{R} ^1 (-)$ (cf. \cite{Ha88}). 
\end{enumerate}
\end{remark}

In the paper, we use a theory of Auslander-Reiten (abbr. AR) sequences. 
For the detail, we recommend the reader to refer to \cite{Y}. 
Let $0 \to Z \to Y \to X \to 0$  be an AR sequence in $\CMR$. 
Then the functor $S_X$  defined by an exact sequence 
$$
0 \to \Hom _R (\  , Z) \to \Hom _R (\  , Y) \to \Hom _R (\  , X) \to S_X \to 0 
$$
is a simple object in $\mod( \CMR )$  and all the simple objects in $\mod( \CMR )$ are obtained in this way from AR sequences (\cite[(4.12)]{Y}). 

For a functor $F \in \modC$, we denote by $\Supp (F)$ a set of isomorphism classes of indecomposable MCM $R$-modules $M$ with $F(M) \not =0$: 
$$
\Supp (F) = \{ M \ \verb+|+ \ F(M) \not=0 \} /\cong . 
$$ 

Let us show the first result of the paper, which is an analogical result due to Auslander.     
We say that $R$ is of finite CM representation type if there are only a finite number of isomorphism classes of indecomposable MCM $R$-modules. 

\begin{theorem}\label{A4}
Let $R$ be a complete CM local ring. 
Then $R$ is of finite CM representation type if and only if $\KGdim \smodC = 0$. 
\end{theorem}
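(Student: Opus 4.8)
The plan is to prove both implications by transporting the question into the stable category and comparing $\smodC$ with the module category of the stable Auslander algebra. Throughout I use the equivalence $\smodC \simeq \mod(\sCMR)$ of Remark \ref{A5}(3) and the fact that $\KGdim \smodC = 0$ means precisely that \emph{every} object of $\smodC$ has finite length.

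For the \emph{only if} direction, assume $R$ is of finite CM representation type. By a theorem of Auslander, $R$ is then an isolated singularity, so every MCM module lies in $\CM_0(R)$ and $\sHomR(M,N)$ has finite length over $R$ for all MCM modules $M,N$. Let $M$ be the direct sum of one representative from each of the finitely many isomorphism classes of non-free indecomposable MCM modules; this is an additive generator of $\sCMR$, and $\Gamma = \sEnd_R(M)$ is of finite length over $R$, hence an Artinian ring. The Auslander equivalence $\smodC \simeq \mod(\sCMR) \simeq \mod(\Gamma)$, $F \mapsto F(M)$, identifies finitely presented functors with finitely generated $\Gamma$-modules. Since every finitely generated module over an Artinian ring has finite length, every object of $\smodC$ has finite length, that is $\KGdim \smodC = 0$.

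For the converse I would argue by contraposition: assuming $R$ is of infinite CM representation type, I would exhibit an object of $\smodC$ that is not of finite length, forcing $\KGdim \smodC \ge 1$. The natural test objects are the representable projectives $\sHomR(-,X)$ for $X$ a non-free indecomposable, which lie in $\smodC$ by Remark \ref{A5}(3). The finiteness of the length of $\sHomR(-,X)$ is equivalent to the nilpotency of its radical filtration, whose consecutive layers are the semisimple functors $\bigoplus_Y S_Y$ indexed by the indecomposables $Y$ admitting a chain of non-isomorphisms (irreducible maps) of the appropriate length into $X$, the $S_Y$ being the simple functors attached to AR sequences. It therefore suffices to produce a single indecomposable $X$ admitting arbitrarily long such chains into it, equivalently a representable functor whose radical is not nilpotent. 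To find one I would exploit the Auslander--Reiten structure: $\sCMR$ is Krull--Schmidt by Remark \ref{A5}(1), and for $R$ an isolated singularity its AR quiver is connected and locally finite, so an infinite vertex set yields an infinite path by K\"onig's lemma; the translation $\syz_R^1$, which preserves paths, would then be used to fold this ray onto a fixed $\tau$-orbit and deduce the non-nilpotency. The remaining possibility, that $R$ is not an isolated singularity, is a sub-case, since such $R$ are automatically of infinite CM type and carry an MCM module $X$ that is not locally free on the punctured spectrum, for which $\sHomR(-,X)$ again fails to have finite length.

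The principal obstacle is exactly this last step of the converse: upgrading ``infinitely many indecomposables'' to a \emph{single} finitely presented functor of infinite length. The subtlety is that a connected, locally finite, infinite AR quiver only guarantees paths of unbounded length, not unbounded paths terminating at one fixed module; making the estimate uniform — equivalently, showing that the Jacobson radical of the category $\smodC$ is not nilpotent — requires combining the translation-quiver structure with the finiteness of irreducible maps at each vertex. This is the technical heart of the theorem, and I expect the specific geometry of the AR quiver (here of type $\Z A_\infty$ in the motivating cases) to be what makes the folding argument go through.
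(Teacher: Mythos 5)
Your forward direction is correct and is essentially the paper's argument (the paper simply cites \cite[Chapter 13]{Y} for the fact that every object of $\smodC$ has finite length in finite CM type; your projectivization via the Artinian stable endomorphism ring of an additive generator is a legitimate way to supply the details). The problem is the converse, where your proposal has a genuine gap that you yourself flag but do not close. Contraposition requires exhibiting a \emph{single} finitely presented functor of infinite length, and nothing in your sketch produces one: with infinitely many indecomposables, each representable $\sHomR(-,X)$ could a priori still have finite support and finite length. Your plan to get non-nilpotency of the radical at a fixed vertex by folding an infinite path (K\"onig's lemma) onto a fixed $\tau$-orbit is exactly the step that is missing, and it is not clear it can be made to work in this generality: when $R$ is not an isolated singularity the AR quiver is not even locally finite at every vertex (AR sequences need not exist for modules outside $\CM _0(R)$), so the K\"onig's lemma setup breaks down, and your final assertion that $\sHomR(-,X)$ "again fails to have finite length" for $X$ not locally free on the punctured spectrum is stated without any justification.

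The paper sidesteps all of this with a different key input: by \cite[Lemma 2.1]{H17} there exists one module $X\in\CMR$ with $\sHomR(M,X)\neq 0$ for \emph{every} non-free MCM module $M$, i.e.\ $\Supp(\sHomR(-,X))\cup\{R\}=\Ind(\CMR)$. If $\KGdim\smodC=0$ then $\sHomR(-,X)$ has finite length, hence finite support, hence there are only finitely many indecomposables. This replaces your quiver-combinatorial argument (which would at best show the radical of the functor category is not nilpotent) by the existence of a single test object whose support is everything, and it works without assuming $R$ is an isolated singularity. If you want to salvage your approach, you should either prove the uniform-path statement you need or import the lemma from \cite{H17}; as written, the converse is not proved.
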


\begin{proof}
Suppose that $R$ is of finite representation type. 
As mentioned in \cite[Chapter 13]{Y}, every functor $F \in \smodC$ has finite length. 
Hence $\KGdim \smodC = 0$. 

Conversely suppose that $\KGdim \smodC = 0$. 
According to \cite[Lemma 2.1]{H17}, there exists $X \in \CMR$ such that $\sHomR (M, X) \not= 0$ for all non free MCM $R$-modules $M$. 
That is, $\Supp (\sHomR (-, X)) \cup \{R \} = \Ind (\CMR )$. 
Since $\sHomR (-, X) \in \smodC$, $\sHomR (-, X)$ belongs to $\smodC _{0}$.   
Thus there are only finitely many indecomposable MCM $R$-modules $M$ such that $\sHomR (M, X) \not= 0$, so that $R$ is of finite CM representation type. 
\end{proof}

\begin{remark}\label{A6}
We note that the Krull--Gabriel dimension of $\modC$ is not always $0$ even if $R$ is of finite CM representation type. 
Actually let $R=k[\![x]\!]$. 
Then $\CMR = \mathrm{add} \{R \}$. Thus $R$ is of finite CM representation type.  
Since $\modC = \mod (R)$ (cf. \cite[Lemma 6.4]{MT17}), we have the equality $\KGdim \mod (R) = \dim R = 1$ by Proposition \ref{A3}.  
\end{remark}

Suppose that $R$ is a Gorenstein local ring. 
Since $\sCMR$ is a triangulated category, we can define
$$
\sHom _R( -, M[-1] ) \to \sHom _R( -, L[-1] ) \to F[-1] \to 0  
$$
for every $F \in \smodC$ with $\sHom _R( -, M) \to \sHom _R( -, L) \to F \to 0$. 
For the later use, we state a lemma.

\begin{lemma}\label{A7}
Let $R$ be a Gorenstein local ring and $S$ a simple object in $\smodC /\smodC _{n-1}$. 
Then $S[-1]$ is also simple in $\smodC /\smodC _{n-1}$. 
\end{lemma}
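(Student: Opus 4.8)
The plan is to realize the shift $(-)[-1]$ as the restriction to $\smodC$ of an exact autoequivalence of the functor category, and then to observe that any such autoequivalence automatically preserves the Krull--Gabriel filtration and hence simplicity in every quotient $\smodC /\smodC _{n-1}$. Since $R$ is Gorenstein, by Remark \ref{A5}(4) the suspension $(-)[-1] = \syz _R^1(-)$ is an autoequivalence of the triangulated category $\sCMR$, with quasi-inverse the desuspension $(-)[1]$. First I would transport this to the level of functors: an additive autoequivalence $T$ of $\sCMR$ induces, by precomposition with its quasi-inverse, an additive functor $\Phi \colon \smodC \to \smodC$ which is again an autoequivalence, and which sends the representable $\sHomR (-, M)$ to $\sHomR (T(-), M) \cong \sHomR (-, TM)$. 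Taking $T = [-1]$, the functor $\Phi$ carries $\sHomR (-, M)$ to $\sHomR (-, M[-1])$, using that $[1]$ is an equivalence of $\sCMR$ (so that $\sHomR (-[1], M) \cong \sHomR (-, M[-1])$). Because exactness in a functor category is tested objectwise on values and $T$ is essentially surjective, precomposition with $T$ is an exact functor; hence $\Phi$ is an exact autoequivalence of the abelian category $\smodC$.

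The next step is to check that $\Phi$ agrees with the shift defined in the excerpt. Given $F$ with presentation $\sHomR (-, M) \to \sHomR (-, L) \to F \to 0$, applying the right exact functor $\Phi$ and using its action on representables yields an exact sequence $\sHomR (-, M[-1]) \to \sHomR (-, L[-1]) \to \Phi (F) \to 0$. Comparing with the definition of $F[-1]$ gives a natural isomorphism $\Phi (F) \cong F[-1]$. Thus $(-)[-1]$ is, up to natural isomorphism, an exact autoequivalence of $\smodC$, with quasi-inverse $(-)[1]$.

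It remains to see that such an autoequivalence preserves the filtration, which is now formal. I would prove by induction on $m \geq -1$ that $\Phi (\smodC _m) = \smodC _m$. The base case $\smodC _{-1} = 0$ is clear. For the inductive step, assuming $\Phi (\smodC _{m-1}) = \smodC _{m-1}$, the exact autoequivalence $\Phi$ preserves the Serre subcategory $\smodC _{m-1}$ and therefore descends to an autoequivalence $\overline{\Phi}$ of the quotient $\smodC /\smodC _{m-1}$. Since equivalences preserve finite-length objects and $\smodC _m$ is precisely the preimage of the finite-length objects of $\smodC /\smodC _{m-1}$, we conclude $\Phi (\smodC _m) = \smodC _m$. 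Applying this with $m = n-1$, the induced autoequivalence $\overline{\Phi}$ of $\smodC /\smodC _{n-1}$ carries simple objects to simple objects; as $S[-1] \cong \Phi (S) = \overline{\Phi}(S)$ in the quotient, $S[-1]$ is simple in $\smodC /\smodC _{n-1}$. The main obstacle I anticipate is the substantive point of the argument, namely verifying that the triangle-defined shift $(-)[-1]$ is genuinely an exact autoequivalence of the abelian category $\smodC$ matching $\Phi$, since the excerpt defines it only on presentations; once this identification is in place, preservation of the Krull--Gabriel filtration follows purely from $\Phi$ being an equivalence.
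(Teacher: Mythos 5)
Your proof is correct, and it takes a somewhat more formal route than the paper while resting on the same underlying mechanism. The paper argues by induction on $n$: the base case identifies the simple objects of $\smodC$ with functors arising from AR sequences and uses that the shift of an AR triangle is again an AR triangle, and the inductive step applies $(-)[1]$ to an admissible sequence $0 \to V \to S[-1] \to C \to 0$, invoking the simplicity of $S$ and the induction hypothesis to place $V$ or $C$ in $\smodC_{n-1}$. You instead isolate the single structural fact underlying all of this: $(-)[-1]$ is (naturally isomorphic to) an exact autoequivalence $\Phi$ of the abelian category $\smodC \simeq \mod(\sCMR)$, given by precomposition with $[1]$, and any exact autoequivalence preserves each layer $\smodC_m$ of the Krull--Gabriel filtration and hence simplicity in every quotient. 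This buys you two things: the base case needs no AR theory (an autoequivalence of an abelian category preserves simple objects for free), and the step the paper leaves implicit --- that $G \in \smodC_{n-1}$ implies $G[\pm 1] \in \smodC_{n-1}$, which is what lets one pass from ``$V[1] \in \smodC_{n-1}$'' to ``$V \in \smodC_{n-1}$'' --- is made explicit by your induction showing $\Phi(\smodC_m) = \smodC_m$. The only blemish is a notational slip: precomposition with the quasi-inverse sends $\sHomR(-,M)$ to $\sHomR(T^{-1}(-),M) \cong \sHomR(-,TM)$, not to $\sHomR(T(-),M)$; your subsequent concrete computation with $T=[-1]$ has it right, so nothing breaks.
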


\begin{proof}
We prove by induction on $n$. 
Suppose that $S$ is simple in $\smodC$, that is $S$ is obtained from an AR sequence. 
Notice that an AR sequence gives an AR triangle. (\cite[Proposition 2.2]{H18}.) 
Since the shift functor $(-)[-1]$ is an auto-functor, the shift of an AR triangle is also an AR triangle so that $S[-1]$ is induced by an AR triangle (sequence). 
Hence $S[-1]$ is simple in in $\smodC$. 
Then we have $F[-1]$ has finite length in $\smodC$ if $F$ does. 

Now suppose that $S$ is a simple object in $\smodC /(\smodC )_{n-1}$. 
Let $0 \to V \to S[-1] \to C \to 0$ be an admissible exact sequence in $\smodC$. 
Apply the shift functor $(-)[1]$ to the sequence, we have $0 \to V[1] \to S \to C[1] \to 0$. 
Since $S$ is simple, $V[1]$ or $C[1]$ are in $(\smodC )_{n-1}$. 
By the induction hypothesis, $V= V[1][-1]$ or $C= C[1][-1]$ are in $(\smodC )_{n-1}$. 
This implies that $S[-1]$ is simple in $\smodC /(\smodC )_{n-1}$. 
\end{proof}

%%%%%%%%%%%%%%%%%%%%%%%%%%%%%%%%%%%%%%%%%
%%%%%%%%%%%%%%%%%%%%%%%%%%%%%%%%%%%%%%%%%
%%%%%%%%%%%%%%%%%%%%%%%%%%%%%%%%%%%%%%%%%
\section{Krull--Gabriel dimension of $\smod (\CM (k[\![x,y]\!]/(x^2)))$}\label{B}

Let $k$ be an algebraically closed uncountable field of characteristic not two and $R$ a 1-dimensional hypersurface of type $(A_\infty)$, that is, $R=k[\![x, y]\!]/(x^2)$.
This section is devoted to calculate the Krull-Gabriel dimension of $\smodC$. 
It is known that $R$ is of  {\em countable} CM representation type, namely there exist infinitely but only countably many isomorphism classes of indecomposable MCM $R$-modules.
The non free indecomposable MCM $R$-modules are as follows: 
$$
I= \Coker  (x): R \to R, \qquad 
I_n = \Coker \left( \begin{smallmatrix} x & y^n \\ 0& x \end{smallmatrix}\right): R^{\oplus 2} \to R^{\oplus 2} \quad (n \geq 1). 
$$
See \cite[Proposition 4.1]{BGS87}. 
First we state the main result in this section.

\begin{theorem}\label{B1}
Let $k$ be an algebraically closed uncountable field of characteristic not $2$ and $R=k[\![x, y]\!]/(x^2)$.
Then $\KGdim \smodC = 2$. 
\end{theorem}

To prove the theorem, we shall do some preparation. 

\begin{proposition}\label{B9}\cite[Proposition 2.14 (1)]{H18}
Let $0 \to Z \to Y \to X \to 0$ be an AR sequence. 
Then the following equality holds for each indecomposable $U \in \CMR$: 
$$
\dim _k \sHomR (U, X ) + \dim _k \sHomR(U, Z) - \dim _k \sHomR(U, Y) = \underline{\mu } (U, X) + \underline{\mu } (U, X[-1]). 
$$
Here $\underline{\mu } (U, X)$ the multiplicity of $U$ as a direct summand of $X$ in $\sCMR$. 
\end{proposition}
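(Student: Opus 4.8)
The plan is to exploit the triangulated structure of $\sCMR$, which is available since the hypersurface $R$ is Gorenstein (Remark \ref{A5}(4)), with $(-)[-1] = \syz_R^1(-)$. An AR sequence $0 \to Z \to Y \to X \to 0$ gives rise to an AR triangle
$$Z \xrightarrow{a} Y \xrightarrow{b} X \xrightarrow{c} Z[1]$$
in $\sCMR$; here the end term $X$ is locally free on the punctured spectrum, i.e. $X \in \CM_0(R)$, so all the stable Hom groups below are of finite length over $R$, hence finite dimensional over $k = R/\m$. First I would apply the covariant functor $\sHomR(U,-)$ to this triangle and record the resulting long exact sequence
$$\cdots \to \sHomR(U,Z) \xrightarrow{a_*} \sHomR(U,Y) \xrightarrow{b_*} \sHomR(U,X) \xrightarrow{c_*} \sHomR(U,Z[1]) \to \cdots,$$
of which the only property I will need is exactness at the middle term $\sHomR(U,Y)$.

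Writing $K = \Ker(a_*)$ and $C = \Coker(b_*)$, a routine rank--nullity count at the three terms, using $\Im(a_*) = \Ker(b_*)$, collapses this window to the identity
$$\dim_k\sHomR(U,X) + \dim_k\sHomR(U,Z) - \dim_k\sHomR(U,Y) = \dim_k C + \dim_k K.$$
It then remains to identify $\dim_k C = \underline{\mu}(U,X)$ and $\dim_k K = \underline{\mu}(U,X[-1])$.

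For $C$, I would invoke the almost split property of $b\colon Y \to X$: every morphism $U \to X$ in $\sCMR$ that is not a split epimorphism factors through $b$, i.e. lies in $\Im(b_*)=\Ker(c_*)$. If $U \not\cong X$ in $\sCMR$, then, both being indecomposable, no morphism $U \to X$ is a retraction, so $\Im(b_*) = \sHomR(U,X)$ and $C = 0 = \underline{\mu}(U,X)$. If $U \cong X$, then, since $\sCMR$ is Krull--Schmidt, $\sEnd(X)$ is local, and $\Im(b_*) = \Ker(c_*)$ is a proper ideal (it omits $\mathrm{id}_X$ because $c_*(\mathrm{id}_X) = c \neq 0$) containing every non-unit, hence equals the radical; as $k$ is algebraically closed and $X$ is indecomposable, $C \cong \sEnd(X)/\mathrm{rad}\,\sEnd(X) \cong k$, so $\dim_k C = 1 = \underline{\mu}(X,X)$. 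Pinning $\Im(b_*)$ exactly to the radical via $c \neq 0$ and the locality of $\sEnd(X)$ is the step I expect to require the most care.

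Finally, for $K$ I would continue the long exact sequence one step further to the left. Exactness at $\sHomR(U,Z)$ gives $K = \Im\big((c[-1])_*\big)$, and exactness at $\sHomR(U,X[-1])$ gives $\Ker\big((c[-1])_*\big) = \Im\big((b[-1])_*\big)$, whence
$$\dim_k K = \dim_k\sHomR(U,X[-1]) - \dim_k\Im\big((b[-1])_*\big) = \dim_k\Coker\big((b[-1])_*\big).$$
Since $(-)[-1]$ is an autoequivalence, applying it to the original triangle produces the AR triangle $Z[-1] \xrightarrow{a[-1]} Y[-1] \xrightarrow{b[-1]} X[-1] \xrightarrow{c[-1]} Z$, so the computation of the previous paragraph applied verbatim to it yields $\dim_k\Coker\big((b[-1])_*\big) = \underline{\mu}(U,X[-1])$. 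Substituting $\dim_k C = \underline{\mu}(U,X)$ and $\dim_k K = \underline{\mu}(U,X[-1])$ into the rank--nullity identity gives the claimed equality.
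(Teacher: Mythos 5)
Your argument is correct. Note that the paper does not actually prove this proposition---it is imported verbatim from \cite[Proposition 2.14 (1)]{H18}---so there is no internal proof to compare against; your derivation (pass to the AR triangle, apply $\sHomR(U,-)$, use rank--nullity on the resulting exact window, identify $\Coker(b_*)$ with $\sEnd(X)/\mathrm{rad}$ via the almost split property and locality, and handle $\Ker(a_*)$ by rotating and observing that the shift of an AR triangle is again an AR triangle, exactly as in Lemma \ref{A7}) is the standard one and all the delicate points you flag (properness of $\Ker(c_*)$ because $c\neq 0$, finiteness of the relevant Hom spaces because the end terms lie in $\CM_0(R)$, and $\sEnd(X)/\mathrm{rad}\cong k$ over the algebraically closed residue field) are handled correctly.
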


\begin{lemma}\label{B2}
Let $R$, $I$ and $I_n$ be as above. 
The following statements hold.  
\begin{enumerate}[\rm(1)]
\item $\dim _k \sHomR (I_m, I_n) = \left\{ \begin{array}{l} 2n  \quad m \geq n, \\2m \quad m \leq n. \end{array}\right.$
\item $\dim _k \sHomR (I, I_n) = \dim _k \sHomR (I_n, I) = n$ for $n \geq 1$. 
\item $\dim _k \sHomR (I, I) = \infty$. 
\end{enumerate}
\end{lemma}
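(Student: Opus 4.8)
My plan is to trade the ring $R=k[\![x,y]\!]/(x^2)$ for the discrete valuation ring $T=k[\![y]\!]$ and to describe everything in terms of $T$-modules carrying a square-zero operator. Since $R=T[x]/(x^2)=T\oplus Tx$, an $R$-module is the same thing as a $T$-module $M$ together with a $T$-linear endomorphism (the action of $x$) whose square is zero, and an $R$-homomorphism is exactly a $T$-linear map commuting with these operators. A maximal Cohen--Macaulay $R$-module is MCM over the module-finite subring $T$, hence free over the regular ring $T$; so each module in sight is a free $T$-module equipped with a nilpotent $x$-action. Reading off the presentations in the statement, I would identify $I=\Coker(x)$ with the pair $(T,0)$, the free module $R$ with $(T^{2},N_0)$, and $I_n$ with $(T^{2},N_n)$, where $N_n=\begin{pmatrix}0 & y^n\\ 0 & 0\end{pmatrix}$ (so $x$ kills the first generator and sends the second to $y^n$ times the first). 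This dictionary is the computational backbone of the whole lemma.

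With this in place, $\Hom_R$ between two such pairs is just the space of $T$-valued matrices commuting with the relevant $N$'s, which I would compute by solving the commutation relation entrywise over the domain $T$. For instance $\Hom_R(I_m,I_n)$ consists of the upper-triangular matrices $\begin{pmatrix} a & b\\ 0 & d\end{pmatrix}$ subject to $a y^m=d y^n$, so it is free of rank two over $T$; likewise $\Hom_R(I,I_n)$, $\Hom_R(I_n,I)$ and $\Hom_R(I,I)$ each emerge as a single free $T$-module after one such linear-algebra computation. These Hom groups are all infinite-dimensional over $k$, so the actual content lies in passing to the stable quotient.

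Next I would compute $P(M,N)$, the morphisms factoring through free modules, which, since $R$ is the only indecomposable free module, is the additive subgroup generated by composites $I_{*}\to R\to I_{*}$. Composing the explicit descriptions of $\Hom_R(-,R)$ and $\Hom_R(R,-)$ obtained the same way, I would check that $P(I_m,I_n)$ is the $T$-submodule given by multiplication by $y^{\min(m,n)}$ in each of the two free coordinates, and that $P(I,I_n)$ and $P(I_n,I)$ are each the image of multiplication by $y^{n}$. The quotient $\sHomR=\Hom_R/P$ is then a direct sum of modules of the form $T/y^{j}T$, and since $\dim_k T/y^{j}T=j$ this gives $\dim_k\sHomR(I_m,I_n)=2\min(m,n)$ and $\dim_k\sHomR(I,I_n)=\dim_k\sHomR(I_n,I)=n$, which are precisely (1) and (2). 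For (3) the same bookkeeping shows that every composite $I\to R\to I$ vanishes, whence $P(I,I)=0$ and $\sHomR(I,I)=\Hom_R(I,I)\cong T$ is infinite-dimensional over $k$.

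The step I expect to be most delicate is the honest determination of $P$: one must capture the full $T$-submodule generated by the composites — verifying that the two free coordinates can be hit independently, and that factoring through $R^{k}$ contributes nothing beyond finite sums of factorings through a single $R$ — rather than just a convenient set of generators. It is also worth emphasising that part (3) is exactly where finiteness breaks down, and the dictionary explains why: $I=R/(x)$ is the unique non-free indecomposable whose identity-type endomorphisms admit no factorization through a free module. As a cross-check, once (1)--(3) are established one can test them against Proposition \ref{B9} applied to the relevant Auslander--Reiten sequences, which provides an independent grip on these numbers.
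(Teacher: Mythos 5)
Your argument is correct, and it takes a genuinely different route from the paper. The paper first computes the base case $\dim_k\sHomR(-,I_1)$ via the isomorphism $\sHomR(M,N)\cong\Ext^1_R(M,\syz^1_R N)$ and the $2$-periodic resolutions, and then propagates to all $I_n$ by induction using the numerical Auslander--Reiten formula of Proposition \ref{B9} applied to the sequences $0\to I_n\to I_{n+1}\oplus I_{n-1}\to I_n\to 0$; part (3) is read off from $\Ext^1_R(I,I)\cong (x)/(x^2)\cong k[\![y]\!]$. You instead exploit the Noether normalization $T=k[\![y]\!]\subset R$ to encode every MCM module as a free $T$-module with a square-zero operator, compute $\Hom_R$ as commuting matrices, and determine the ideal $P(-,-)$ of maps factoring through free modules by composing the explicit descriptions of $\Hom_R(-,R)$ and $\Hom_R(R,-)$; I checked your key identifications ($I\cong(T,0)$, $I_n\cong(T^2,\left(\begin{smallmatrix}0&y^n\\0&0\end{smallmatrix}\right))$, $P(I_m,I_n)=y^{\min(m,n)}\Hom_R(I_m,I_n)$ in the natural basis, $P(I,I)=0$) and they all hold, so the counts $2\min(m,n)$, $n$, $\infty$ follow. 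Your method is more elementary and self-contained (no AR theory is needed, and the delicate point you flag --- that $P$ is exactly the $T$-submodule generated by composites through single copies of $R$ --- does work out), and it yields strictly more information: the full $T$-module structure $\sHomR(I_m,I_n)\cong (T/y^{\min(m,n)})^{\oplus 2}$ and in particular $\sHomR(I,I)\cong k[\![y]\!]$, a fact the paper needs again in Proposition \ref{B8}. What the paper's route buys in exchange is uniformity: the same $\Ext$-plus-AR-recursion template is reused almost verbatim for the $(D_\infty)$ surface in Section \ref{D}, where explicit operator models are less pleasant.
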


\begin{proof}
First we shall compute $\dim _k \sHomR (-, I_1)$. 
Since $R$ is Gorenstein $\sHomR (M, N) \cong \Ext ^1_R (M, \syz _R ^1(N))$ (cf. \cite[(12.10)]{Y}). 
One has $I_n \cong \syz _R ^1(I_n)$. 
Thus it is enough to compute the dimension of $\Ext ^1_R (-, I_1)$. 
Notice that that $I_1=(x,y)R$. 
We have the complex;
$$
\begin{CD}
\cdots @>>> {I_1}^{\oplus 2} @>{\beta = \left( \begin{smallmatrix}x & 0 \\ y^n & -x \end{smallmatrix}\right)}>> {I_1}^{\oplus 2} @>{\alpha = \left( \begin{smallmatrix}x & 0 \\ y^n & -x \end{smallmatrix}\right)}>>{I_1}^{\oplus 2}  @>>> \cdots ,  
\end{CD}
$$ 
and $\Ext _R ^1 (I_n, I_1) \cong \ker \alpha / \Im \beta$. 
Let $\left( \begin{smallmatrix}a \\ b \end{smallmatrix}\right)\in  {I_1}^{\oplus 2}$. 
Assume that $\alpha (\left( \begin{smallmatrix}a \\ b \end{smallmatrix}\right)) =  \left( \begin{smallmatrix}x & 0 \\ y^n & -x \end{smallmatrix}\right) \left( \begin{smallmatrix}a \\ b \end{smallmatrix}\right) =  \left( \begin{smallmatrix}ax \\ a y^n -bx \end{smallmatrix}\right) = 0$. 
Since $ax \in (x^2)$, $a \in (x) \cap I_1 = (x)$.  
Moreover, since $a y^n -bx \in (x^2)$, one has $a y^n -bx=cx^2$ for some $c \in k[\![x, y]\!]$. 
It implies that $b = a' y^n-cx \in (x, y^n)$ with $a=a'x$. 
Thus, $\Ext _R ^1 (I_n, I_1) \cong \left\{ \left( \begin{smallmatrix}a'x \\ -cx + a'y^n \end{smallmatrix}\right)\ \verb+|+ \  a'x \in (x)/(x^2 , xy), -cx + a'y^n\in (x, y^n)/(x^2, xy, y^{n+1})\right\} \cong k ^{\oplus 2}$. 
Hence $\dim _k \sHomR (I_n, I_1) = \dim_k \Ext_R^1 (I_n, I_1) = 2$. 
Similarly, we have $\Ext_R^1 (I, I_1) \cong (x)/(x^2, xy) \cong k$, so that $\dim _k \sHomR (I, I_1)=1$.

Next we attempt to compute $\dim _k \sHomR (-, I_n)$ by using Proposition \ref{B9}. 
For each $n \geq1$, there exists an AR-sequence: $0 \to I_n \to I_{n+1} \oplus I_{n-1} \to I_n \to 0$, where $I_0 \cong R$.  
We note again that every MCM $R$-module $M$ is isomorphic to $\syz _R ^1 (M)$, namely $M \cong M[-1]$ in $\sCMR$. 
Then we claim that the following equation holds for $n \geq 1$:

\begin{claim} 
We have the following equation. 
\begin{equation}\label{B2-1}
\dim _k \sHomR (-, I_{n})= n\dim _k \sHomR (-, I_{1})-\sum _{i=1} ^{n-1} 2(n-i)\underline{\mu} (-, I_i). 
\end{equation}
\end{claim}

We prove it by induction on $n$. 
The case $n=1$ is clear. 
For $n >1$, by Proposition \ref{B9}, we have  
$$
\dim _k \sHomR (-, I_{n+1}) = 2 \dim _k \sHomR(-, I_{n}) - \dim _k \sHomR(-, I_{n-1}) - 2 \underline{\mu} (-, I_{n}).
$$
By the induction hypothesis,
$$
\begin{array}{ll}
\dim _k \sHomR (-, I_{n+1}) &= 2 \{ n\dim _k \sHomR (-, I_{1})-\sum _{i=1} ^{n-1} 2(n-i)\underline{\mu} (-, I_i) \} \\
&- \{ (n-1)\dim _k \sHomR (-, I_{1})-\sum _{i=1} ^{n-2} 2(n-1-i)\underline{\mu} (-, I_i)\} \\
& - 2 \underline{\mu} (-, I_{n}) \\
&= \{ 2n-(n-1)\} \dim _k \sHomR (-, I_{1}) \\ 
&+ \sum _{i=1}^{n-2} 2 \{ 2(n-i)-(n-1-i)\}\underline{\mu} (-, I_i)\\
& -4 \underline{\mu} (-, I_{n-1}) - 2 \underline{\mu} (-, I_n) \\
&= (n+1) \dim _k \sHomR (-, I_{1}) + \sum _{i=1}^{n} 2 (n +1-i )\underline{\mu} (-, I_i). 
\end{array}
$$
Hence the equation (\ref{B2-1}) holds.  
\qed

Now we calculate the dimension of $\sHomR (I_m, I_n)$. 
Suppose that $m \geq n$.
By virtue of the equation (\ref{B2-1}), $\dim _k \sHomR (I_m, I_n) = n\dim _k \sHomR (I_m, I_{1})-\sum _{i=1} ^{n-1} 2(n-i)\underline{\mu} (I_m, I_i) = n\dim _k \sHomR (I_m, I_{1})=2n$. 
If $m \leq n$ then we have the equality
$$
\begin{array}{ll}
\dim _k \sHomR (I_m, I_{n})&= n\dim _k \sHomR (I_m, I_{1})-\sum _{i=1} ^{n-1} 2(n-i)\underline{\mu} (I_m, I_i)\\
&= 2n - 2(n-m) \underline{\mu} (I_m, I_m) = 2m. 
\end{array}
$$
Moreover, the equation (\ref{B2-1}) also shows that $\dim _k \sHomR (I, I_{n})= n\dim _k \sHomR (I, I_{1})-\sum _{i=1} ^{n-1} 2(n-i)\underline{\mu} (I, I_i)= n$. 
By AR duality (see Remark \ref{B9}), we also have  $\dim _k \sHomR (I_{n}, I)= n$. 
Therefore the assertions (1) and (2) hold. 
(3) follows from the isomorphism $\Ext _R ^1 (I, I) \cong (x)/(x^2) \cong k[\![y]\!]$.  
\end{proof}

\begin{remark}\label{B9}
It is known as AR duality that $\HomR (\sHomR (M, N), E(k)) \cong \Ext ^1 (N, \tau M)$ for $N \in \CMR _0$. 
Here $E(k)$ is the injective hull of $k$ and $\tau M$ is the AR translation of $M$. See \cite[(3.10)]{Y}. 
It follows from the Matlis duality that $\dim_k \sHomR  (M, N) = \dim _k \sHomR  (N, \tau M)$. 
Hence, in Lemma \ref{B2} (1)(2), one can show from either equation the other equation. 
For instance, one has $\dim _k \sHomR (-, I_{n}) = \dim _k \sHomR (I_n,- )$ since $\tau M \cong M$ in this case. 
\end{remark}

\begin{lemma}\label{B3}
For $n>0$, we have the exact sequence: 
\begin{equation}\label{sequence}
0\to I_{n} \xrightarrow{\left( \begin{smallmatrix}\frac{x}{y^{n}} \\ 1\end{smallmatrix}\right)} I \oplus R \xrightarrow{(y^{n} \ -x)} I \to 0. 
\end{equation}
\end{lemma}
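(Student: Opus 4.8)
The plan is to pin down the two maps as honest $R$-homomorphisms and then to prove exactness by computing $\Ker g$ directly. The first move is to realize $I=\Coker(x)$ concretely as the ideal $xR\subseteq R$; this identification is legitimate because $\mathrm{ann}_R(x)=(x)$, so $xR\cong R/(x)=I$. Under it the arrow $g=(y^n\ {-}x)$ becomes the genuine map $I\oplus R\to I$, $(\xi,z)\mapsto y^n\xi-xz$, and the column $\left(\begin{smallmatrix} x/y^n\\ 1\end{smallmatrix}\right)$ is read as the homomorphism that sends a generic generator of $I_n$ (a generator of $I_n\otimes_R Q\cong Q$, where $Q=k(\!(y)\!)[x]/(x^2)$ is the total quotient ring of $R$) to $(x/y^n,1)$. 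Concretely, on the generators $e_1,e_2$ of the presentation $R^{\oplus 2}\xrightarrow{\left(\begin{smallmatrix} x & y^n\\ 0 & x\end{smallmatrix}\right)}R^{\oplus 2}\to I_n\to 0$ this map $f$ is $e_1\mapsto(0,x)$, $e_2\mapsto(-x,-y^n)$, and one checks that it respects the two relations $xe_1=0$ and $y^ne_1+xe_2=0$, so that $f$ is well defined.

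With the maps in hand I would run the three standard checks. That the composite $gf$ is zero is immediate from $y^n\cdot(x/y^n)-x\cdot1=0$ (equivalently, evaluate on $e_1,e_2$). Surjectivity of $g$ is the easy but essential point: already the restriction of $g$ to the summand $R$ is $z\mapsto -xz$, whose image is all of $xR=I$. The substance is exactness at $I\oplus R$ together with injectivity of $f$, and for this I would compute $\Ker g=\{(\xi,z)\in xR\oplus R\mid y^n\xi=xz\}$ explicitly, parametrize its elements, and exhibit the two generators $\epsilon_1=(x,y^n)$ and $\epsilon_2=(0,x)$, which satisfy exactly the relations $x\epsilon_2=0$ and $x\epsilon_1=y^n\epsilon_2$.

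The heart of the argument is the comparison of $\Ker g$ with $I_n$: after the change of generators $e_1\mapsto\epsilon_2$, $e_2\mapsto-\epsilon_1$, the defining relations $xe_1=0$ and $y^ne_1+xe_2=0$ of $I_n$ transform into precisely the relations satisfied by $\epsilon_1,\epsilon_2$, so the induced map $I_n\to\Ker g$ is an isomorphism; this yields both the injectivity of $f$ and exactness in the middle, and it matches $f$ with the claimed column. I expect the main obstacle to be bookkeeping rather than conceptual: one has to justify that the fractional entry $x/y^n$ really produces a map into $I\oplus R$ (this is exactly what the identification $I=xR$ and the relation $y^ne_1=-xe_2$ encode), and one has to confirm that $\epsilon_1,\epsilon_2$ generate all of $\Ker g$ and that no relations beyond the two listed occur. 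If one prefers to bypass generators, an alternative is to note that the sequence exhibits $I_n$ as the pullback of $R\xrightarrow{\,x\,}I\xleftarrow{\,y^n\,}I$ and to verify exactness by localization: over $Q=R_{(x)}$ exactness is automatic since $y^n$ becomes invertible, so the only genuine content sits at the maximal ideal $\m$ and is supplied by the explicit generators above.
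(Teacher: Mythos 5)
Your proposal is correct: the paper's ``proof'' is just the assertion that the verification is straightforward, and your direct computation (identifying $I$ with $xR$, computing $\Ker(y^n\ {-}x)=\bigl\{(xa,\,y^na+xb)\bigr\}$ with generators $(x,y^n)$ and $(0,x)$, and matching its presentation $\left(\begin{smallmatrix}x&0\\-y^n&x\end{smallmatrix}\right)$ with that of $I_n$ up to sign changes) is exactly the intended check. The explicit formulas $e_1\mapsto(0,x)$, $e_2\mapsto(-x,-y^n)$ are consistent with the symbolic column $\left(\begin{smallmatrix}x/y^n\\1\end{smallmatrix}\right)$ once one reads $I_n$ as the ideal $(x,y^n)$, so nothing is missing.
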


\begin{proof}
It is straightforward. 
\end{proof}

Thanks to Lemma \ref{B3}, we obtain the finitely presented functor:
\begin{equation}\label{B3-1}
0 \to \Hom_R(-, I_{n}) \to \Hom _R (-, I) \oplus \Hom_R (-, R) \to \Hom_R (-, I) \to H_n \to 0.  
\end{equation}
First, we shall show the functor $H_1$ is a simple object in $\smodC/ \smodC_0$.

\begin{proposition}\label{B4}
The functor $H_1$ is simple in $\smodC/\smodC_0$. 
\end{proposition}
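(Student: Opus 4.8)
The plan is to verify the two conditions of Lemma \ref{A2} with $\AA = \smodC$ and $\SS = \smodC_0$: that $H_1 \notin \smodC_0$, and that for every subobject $V \subseteq H_1$ at least one of $V$, $H_1/V$ lies in $\smodC_0$. First I would make $H_1$ explicit in stable terms. Applying Remark \ref{A5}(3) to the presentation (\ref{B3-1}) and using $\sHomR(-, R) = 0$, the presentation collapses to $\sHomR(-, I) \xrightarrow{y} \sHomR(-, I) \to H_1 \to 0$, where the map is post-composition with the first component $y$ of $(y\ -x)$ in (\ref{sequence}) (the $R$-component dies stably). Hence $H_1(U) \cong \Coker\big(\sHomR(U, I) \xrightarrow{y} \sHomR(U, I)\big)$. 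Since $x$ annihilates $I = R/(x)$, each $\sHomR(U, I)$ is a $k[\![y]\!]$-module and $H_1(U) = \sHomR(U, I) \otimes_{k[\![y]\!]} k$.

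Next I would compute these modules as in Lemma \ref{B2}, using $\syz_R^1 I \cong I$ and $\sHomR(-, I) \cong \Ext^1_R(-, I)$. One finds $\sHomR(I_m, I) \cong k[\![y]\!]/(y^m)$ and $\sHomR(I, I) \cong k[\![y]\!]$, both cyclic, while $\sHomR(R, I) = 0$. Therefore $\dim_k H_1(U) = 1$ for every $U \in \{I, I_1, I_2, \dots\}$ and $H_1(U) = 0$ otherwise; in particular $\Supp(H_1)$ is infinite. Because a finite length object of $\smodC$ is a finite extension of simple functors $S_X$, each supported on a single indecomposable, infinite support forces $H_1 \notin \smodC_0$, which gives the first condition.

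The key observation for the subobject condition is that $H_1$ is generated by $H_1(I)$. Indeed $H_1$ is an epimorphic image of the representable functor $\Hom_R(-, I)$, which is generated by $\mathrm{id}_I$, so the image of $\mathrm{id}_I$ generates $H_1$ and spans the one-dimensional space $H_1(I)$. Consequently any subfunctor $V$ with $V(I) \neq 0$ already contains this generator, so $V = H_1$; thus every proper $V \subsetneq H_1$ has $\Supp(V) \subseteq \{I_1, I_2, \dots\}$. I would then use the irreducible maps $I_m \to I_{m+1}$ from the AR sequences $0 \to I_m \to I_{m+1} \oplus I_{m-1} \to I_m \to 0$: pre-composition gives a $k[\![y]\!]$-linear map $\sHomR(I_{m+1}, I) \to \sHomR(I_m, I)$, i.e. $k[\![y]\!]/(y^{m+1}) \to k[\![y]\!]/(y^m)$, and I would show it is surjective, hence an isomorphism $H_1(I_{m+1}) \xrightarrow{\sim} H_1(I_m)$ after $\otimes_{k[\![y]\!]} k$. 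Since each $H_1(U)$ is at most one-dimensional, this forces $\Supp(V)$ to be closed under $m \mapsto m-1$, so it is an initial segment $\{I_1, \dots, I_j\}$ or all of $\{I_m : m \ge 1\}$.

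Finally I would read off the dichotomy: if $\Supp(V)$ is finite then $V$ has finite length, so $V \in \smodC_0$; and if $\Supp(V) = \{I_m : m \ge 1\}$ then $H_1/V$ is supported only on $\{I\}$, hence has finite length and lies in $\smodC_0$. Together with $H_1 \notin \smodC_0$ this verifies the hypotheses of Lemma \ref{A2}, so $H_1$ is simple in $\smodC/\smodC_0$. I expect the main obstacle to be the middle step: checking that the transition maps $\sHomR(I_{m+1}, I) \to \sHomR(I_m, I)$ are surjective (equivalently, nonzero modulo $y$), which is what pins $\Supp(V)$ down as an interval. This needs either explicit matrix representatives for the irreducible maps coming from (\ref{sequence}) and the AR sequences, or the observation that the inverse system $\{\sHomR(I_m, I)\} = \{k[\![y]\!]/(y^m)\}$ carries the natural projections and has limit $\sHomR(I, I) \cong k[\![y]\!]$; once the maps are identified, everything else is formal.
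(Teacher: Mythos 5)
Your proof is correct in outline but takes a genuinely different route from the paper's. The paper takes an arbitrary admissible subobject $V$ with a presentation $0 \to \Hom_R(-,Z) \to \Hom_R(-,Y) \to \Hom_R(-,X) \to V \to 0$ and uses the index formula of Proposition \ref{B9} together with the dimension table of Lemma \ref{B2} to show that $\dim_k V(I_n)$ is eventually constant, equal to $0$ or $1$, and then splits into the two cases accordingly. You instead exploit two structural facts: $H_1$ is cyclic, generated by the image of $\mathrm{id}_I$ in the one-dimensional space $H_1(I)$, so every proper subfunctor vanishes at $I$; and the values $\sHomR(I_m,I)$ form an inverse system of cyclic $k[\![y]\!]$-modules with surjective transition maps, so the support of a proper subfunctor is an initial segment of $\{I_m\}$. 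This is cleaner and more conceptual. The surjectivity you flag as the main obstacle does check out: for instance the morphism $I_m \to I_{m+1}$ presented by the pair $\bigl(\mathrm{diag}(y,1),\mathrm{diag}(y,1)\bigr)$ of matrices induces the canonical projection $k[\![y]\!]/(y^{m+1}) \to k[\![y]\!]/(y^{m})$ on $\Ext^1_R(-,I)$. Note also that you need the cyclicity of $\sHomR(I_m,I)$ over $k[\![y]\!]=\sEnd (I)$, not merely its $k$-dimension from Lemma \ref{B2}, to conclude $\dim_k H_1(I_m)=1$; this is a short extra matrix-factorization computation (or one can quote the paper's computation via the long exact sequence of \cite[Proposition 3.3]{Y05}).

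The one step that does not stand as written is the final case: from $\Supp(H_1/V) \subseteq \{I\}$ you conclude that $H_1/V$ has finite length. Finite support does not imply finite length here, precisely because $I \notin \CM_0 (R)$: the composition factors of a finite length object of $\smodC$ are simple functors arising from AR sequences, and those are supported in $\CM_0 (R)$, so a \emph{nonzero} finitely presented functor supported only at $I$ can never have finite length. The correct way to close this case is to observe that such a functor, having one-dimensional value at $I$ and vanishing elsewhere, would itself be a simple object of $\smodC$ and hence by \cite[(4.12), (4.13)]{Y} would force the existence of an AR sequence ending in $I$, contradicting $I \notin \CM_0 (R)$ --- the same contradiction the paper's proof invokes. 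Hence this case is vacuous: no proper subfunctor of $H_1$ has full support, every proper subfunctor lies in $\smodC_0$, and Lemma \ref{A2} applies. With that repair your argument is complete.
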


\begin{proof}
By \cite[Proposition 3.3]{Y05}, the exact sequence (\ref{B3-1}) induces the long exact sequence:
\begin{equation}\label{long sequence}
\begin{CD}
@>>> H_1 @>>> 0@.@. \\
@>>> \sHomR(-, I_1) @>{\frac{x}{y}}>> \sHomR(-, I) @>{y}>> \sHomR(-, I) \\
@>>> \sHomR(-, I_1[-1]) @>>> \sHomR(-, I[-1]) @>>> \sHomR(-, I[-1]) . 
\end{CD}
\end{equation}
For each indecomposable $X \in \CM_0 (R)$, since $\dim _k \sHomR (X, I_1)$ and $\dim _k \sHomR (X, I)$ is finite, we have $\dim _k H_1 (X) = \frac{1}{2} \dim _k \sHomR (X, I_1)=1$. 
Since $\sHomR(I,I) \cong k[\![y]\!]$, one has $H_1 (I) \cong k[\![y]\!]/yk[\![y]\!]$. 
Consequently $\dim _k H_1 (X) = 1$ for all indecomposable $X \in \CMR$.

Let $0 \to V \to H_1 \to C \to 0$ be an admissible exact sequence in $\smodC$. 
Since $V \in \smodC$, we have the exact sequence $0 \to \Hom_R(-, Z) \to \Hom _R (-, Y) \to \Hom_R(-, X) \to V \to 0$. 
Then, for all $M \in \CM_0 (R)$,  
$$
\dim _k V(M) = \frac{1}{2} \left\{ \dim _k \sHomR (M, X) + \dim _k \sHomR (M, Z) -\dim _k \sHomR (M, Y)  \right\}. 
$$
Let $X = I ^{\oplus a_0} \oplus I_{l_1}^{\oplus a_1} \oplus \cdots \oplus  I_{l_{l'}}^{\oplus a_{l'}}$, $Y = I ^{\oplus b_0} \oplus I_{m_1}^{\oplus b_1} \oplus \cdots \oplus  I_{m_{m'}}^{\oplus b_{m'}}$ and $Z = I ^{\oplus c_0} \oplus I_{n_1}^{\oplus c_1} \oplus \cdots \oplus  I_{n_{n'}}^{\oplus c_{n'}}$. 
We put $m= max \{ l_1, ..., l_{l'}, m_1 , ..., m_{m'}, n_1, ..., n_{n'} \}$. 
For $m \leqq n < \infty$, 
$$
\dim _k V(I_n ) = \frac{1}{2}\left(  \sum_i ^{l'} m \cdot a_i + \sum_i ^{n'} m \cdot c_i  - \sum_i ^{m'} m \cdot b_i\right). 
$$
This equation yields that $\dim _k V(I_n )$ are $0$ or $1$ for $m \leqq n < \infty$ since $V$ is a subfunctor of $H_1$. 
Assume that $\dim _k V(I_n ) = 0$ for $m \leqq n$. 
Then $V(I_n) =0$ except for a finite number of $I_n$. 
Namely $\Supp (V)$ is a finite set, and we shall show $I \not \in \Supp (V)$. 
If it holds, $V$ is  in $\smodC _0$. 
Assume that $I \in \Supp (V)$. 
For $I' \in \Supp (V)\bigcap \CM_0 (R)$, there is an epimorphism from $V \to S_{I'}$. (See the proof of \cite[(4.12)]{Y}.) 
Put the kernel of the epimorphism as $V'$. 
Then $V' \in \smodC$ and $\Supp (V') = \Supp (V) \backslash \{ I' \}$. 
Repeating the procedure, we obtain the functor $\tilde{V}\in \smodC$ such that $\Supp (\tilde{V}) = \{ I \}$ and $\dim _k \tilde{V}(I) = 1$. 
It yields that $\tilde{V}$ is a simple object with $\tilde{V}(I) \not= 0$, so that the AR sequence ending in $I$ exists (\cite[(4.13)]{Y}). 
Namely $I \in \CM_0 (R)$ (\cite[(3.4)]{Y}). 
This is a contradiction. 
Hence $I \not \in \Supp (V)$. 

Assume that $\dim _k V(I_n ) = 1$ for $m \leqq n$. 
Then $\dim _k C(I_n) = 0$ for $m \leqq n$.  
Apply the same argument to $C$ and we also conclude that $C$ is contained in $\smodC _0$. 
Consequently we get the assertion.  
\end{proof}

\begin{remark}\label{B5}
Since $H_1$ is a subfunctor of $\sHomR (-, I_1)$, we have an exact sequence $0 \to H_1 \to \sHomR (-, I_1) \to H' _1 \to 0$ in $\smodC$. 
By the calculation in the proof of Proposition \ref{B4}, $\dim_k H' _1(I_n)=1$ for all $n$ and $\dim_k H' _1(I )=0$. 
By using the same argument of  Proposition \ref{B4}, one can also show that $H' _1$ is a simple object in $\smodC/\smodC_0$. 
Therefore, $\ell (\sHomR (-, I_1) )=2$ in $\smodC/\smodC_0$.  
\end{remark}

\begin{proposition}\label{B6}
The length of $\sHomR (-, I_n)$ is finite in $\smodC/\smodC_0$ for $n\geq 1$. 
\end{proposition}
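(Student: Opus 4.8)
The plan is to induct on $n$ using the Auslander--Reiten sequence $0 \to I_n \to I_{n+1}\oplus I_{n-1} \to I_n \to 0$ (with $I_0 \cong R$) that already appeared in the proof of Lemma \ref{B2}. Applying $\Hom_R(-,\,?)$ and then stabilising via Remark \ref{A5}(3), this AR sequence yields the exact sequence
$$
\sHomR(-, I_n) \xrightarrow{\alpha} \sHomR(-, I_{n+1}) \oplus \sHomR(-, I_{n-1}) \xrightarrow{\beta} \sHomR(-, I_n) \to S_{I_n} \to 0
$$
in $\smodC$, where $S_{I_n}$ is the simple functor attached to this AR sequence. As a simple object, $S_{I_n}$ has finite length, so $S_{I_n} \in \smodC _0$ and it vanishes in $\smodC /\smodC _0$. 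If the left map $\alpha$ were a monomorphism this would already give the short exact sequence I want; the real issue is that stabilisation can create a kernel $K := \Ker \alpha$, so the first task is to show $K \in \smodC _0$.

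To control $K$ I would extend the sequence on the left to
$$
0 \to K \to \sHomR(-, I_n) \xrightarrow{\alpha} \sHomR(-, I_{n+1}) \oplus \sHomR(-, I_{n-1}) \xrightarrow{\beta} \sHomR(-, I_n) \to S_{I_n} \to 0
$$
and compute $k$-dimensions using Lemma \ref{B2}. Since $I_n \in \CM_0 (R)$ for $n\ge 1$, every value $K(U) \subseteq \sHomR(U, I_n)$ is finite dimensional. Taking the alternating sum of dimensions along this exact sequence: at $U = I_m$ with $m > n$ one gets $\dim _k K(I_m) = 2n - 4n + 2n = 0$, and at $U = I$ one gets $\dim _k K(I) = n - 2n + n = 0$. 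Hence $\Supp(K)$ is a finite subset of $\CM_0 (R)$. Because each $I'$ in $\Supp(K)$ lies in $\CM_0 (R)$, one can repeatedly split off simple quotients $S_{I'}$ exactly as in the proof of Proposition \ref{B4}, each step lowering $\sum _M \dim _k K(M)$ by one; this exhibits $K$ as a finite length functor, so $K \in \smodC _0$.

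With both $K$ and $S_{I_n}$ trivial in $\smodC /\smodC _0$, the sequence collapses to a genuine short exact sequence
$$
0 \to \sHomR(-, I_n) \to \sHomR(-, I_{n+1}) \oplus \sHomR(-, I_{n-1}) \to \sHomR(-, I_n) \to 0 .
$$
Taking $\ell(\sHomR(-, I_0)) = 0$ and $\ell(\sHomR(-, I_1)) = 2$ from Remark \ref{B5} as the base cases, additivity of length in the quotient gives $\ell(\sHomR(-, I_{n+1})) + \ell(\sHomR(-, I_{n-1})) = 2\,\ell(\sHomR(-, I_n))$; since $\sHomR(-, I_{n+1})$ is a direct summand of the finite length middle term, it too has finite length (and one in fact reads off $\ell(\sHomR(-, I_n)) = 2n$). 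The main obstacle is entirely the kernel estimate of the second paragraph, in particular establishing $K(I) = 0$ so that $K$ avoids the module $I \notin \CM_0 (R)$ for which no simple functor exists; once $\alpha$ is known to be injective modulo $\smodC _0$, the remainder is a formal two-step induction.
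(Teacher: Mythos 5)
Your proof is correct, and it follows the same basic strategy as the paper: induct on $n$ using the AR sequences $0 \to I_n \to I_{n+1}\oplus I_{n-1} \to I_n \to 0$. The difference lies in how the inductive step is closed. The paper never analyses $\Ker\alpha$ at all: from the exact sequence $\sHomR(-,I_n) \xrightarrow{\alpha} \sHomR(-,I_{n+1})\oplus\sHomR(-,I_{n-1}) \xrightarrow{\beta} \sHomR(-,I_n)$ it observes that $\Im\alpha=\Ker\beta$ is a quotient of $\sHomR(-,I_n)$ and $\Im\beta$ is a subobject of it, so the middle term is an extension of two objects of $\smodC_1$ and hence lies in the Serre subcategory $\smodC_1$; its direct summand $\sHomR(-,I_{n+1})$ then does too. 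You instead prove the stronger statement that the AR sequence induces a genuine short exact sequence in $\smodC/\smodC_0$, which requires the dimension count from Lemma \ref{B2} to show $K(I)=0$ and $K(I_m)=0$ for $m>n$, plus the finite-support splitting argument recycled from Proposition \ref{B4} to conclude $K\in\smodC_0$; these verifications check out. What your extra work buys is the exact value $\ell(\sHomR(-,I_n))=2n$ by additivity of length, which the paper only records afterwards in a separate remark via a Grothendieck-group computation; what the paper's version buys is brevity, since closure of $\smodC_1$ under subobjects, quotients and extensions does all the work.
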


\begin{proof}
First we claim that $\ell (\sHomR (-, I_1) ) < \infty$ in $\smodC/\smodC_0$, and it holds by Proposition \ref{B4} and Remark \ref{B5}. 
Suppose that $n>1$. 
Since there is an AR sequence $0 \to I_{n} \to I_{n+1}\oplus I_{n-1} \to I_{n} \to 0$ (\cite[6.1]{S87}), we obtain the sequence: 
$
\sHom_R(-, I_{n}) \to \sHom_R(-, I_{n+1})\oplus \sHom_R(-, I_{n-1}) \to \sHom_R(-, I_{n}). 
$
Since $\smodC_1$ is a Serre subcategory, $\sHomR (-, I_n)$ belongs to $\smodC_1$. 
That is $\ell (\sHomR (-, I_n)) < \infty$ in $\smodC/\smodC_0$. 
\end{proof}

\begin{remark}
In the Grothendieck group of $\smodC$, an AR sequence gives $[\sHom_R(-, I_{n+1})]+ [\sHom_R(-, I_{n-1})] = 2 [\sHom_R(-, I_{n})] - 2 [S_{I_n}]$. 
Combing the equality with Remark \ref{B5}, one has $\ell (\sHomR (-, I_n))= 2n$ in $\smodC/\smodC_0$ for $n \geq 1$. 
By \cite[Proposition 2.1 (1)]{AIT12}, there is an exact sequence $0 \to I \to I_n \to I \to 0$ for $n \geq 1$. 
Then $2\ell (\sHomR (-, I)) \geq \ell (\sHomR (-, I_n ))$ in $\smodC/\smodC_0$. 
This yields that $\sHomR (-, I)$ does not belong to $\smodC_1$. 
\end{remark}

\begin{lemma}\label{B7}
Let $F$ be a finitely presented functor with the exact sequence
$
0 \to \Hom _R (-, Z) \to \Hom _R (-, Y) \to \Hom _R (-, X \oplus C) \to F \to 0. 
$
Then there is an exact sequence of functors
$$
F' \xrightarrow{\rho} F \to F/\Im \rho \to 0
$$
such that $F'$ and $F/\Im \rho$ are images of $\Hom _R(-, X)$ and $\Hom _R(-, C)$ respectively. 
\end{lemma}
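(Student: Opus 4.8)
The plan is to exploit the additivity of the representable functor, namely the canonical isomorphism $\Hom _R (-, X \oplus C) \cong \Hom _R (-, X) \oplus \Hom _R (-, C)$, and to split off the part of $F$ coming from the summand $X$. Write $\pi \colon \Hom _R (-, X) \oplus \Hom _R (-, C) \to F$ for the epimorphism appearing in the given presentation, and let $\iota _X, \iota _C$ denote the two canonical inclusions into the direct sum. I would set $\rho ' = \pi \circ \iota _X \colon \Hom _R (-, X) \to F$ and define $F'$ to be the image $\Im \rho '$, regarded as a subfunctor of $F$; the map $\rho \colon F' \to F$ is then taken to be the inclusion. Since $\modC$ is abelian (Remark \ref{A5} (2)), $F'$ and the quotient $F/\Im \rho$ are again finitely presented functors, so every construction stays inside the category.

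By construction $F' = \Im \rho '$ is a quotient of $\Hom _R (-, X)$, hence an image of $\Hom _R (-, X)$, and the sequence $F' \xrightarrow{\rho} F \to F/\Im \rho \to 0$ is exact because $\rho$ is the inclusion of $\Im \rho = F'$ and $F \to F/\Im \rho$ is its cokernel. It remains to identify $F/\Im \rho = F/F'$ as an image of $\Hom _R (-, C)$. For this I would consider the composite $\psi \colon \Hom _R (-, C) \xrightarrow{\iota _C} \Hom _R (-, X) \oplus \Hom _R (-, C) \xrightarrow{\pi} F \to F/F'$ and check that $\psi$ is an epimorphism: the map $\pi$ is epi, so $\Hom _R (-, X) \oplus \Hom _R (-, C) \to F/F'$ is epi, while the restriction of this map to the summand $\Hom _R (-, X)$ is $\pi \iota _X$ followed by the projection $F \to F/F'$, which vanishes since $\Im (\pi \iota _X) = F'$. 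Consequently the restriction to the summand $\Hom _R (-, C)$, namely $\psi$, must already be an epimorphism, and therefore $F/F' = \Im \psi$ is an image of $\Hom _R (-, C)$, as required.

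The argument is essentially formal once the additive decomposition is in place, so I do not expect a serious obstacle; the only point that genuinely needs care is to keep track of which maps are the canonical ones. In particular one should resist the tempting but incorrect guess $F' = \Coker (\phi _X)$, where $\phi _X \colon \Hom _R (-, Y) \to \Hom _R (-, X)$ is the $X$-component of the map defining $F$: that cokernel admits no natural map to $F$, because the composite $\pi \iota _X \phi _X$ equals $-\pi \iota _C \phi _C$ and need not vanish. Defining $F'$ as the \emph{image} of $\pi \iota _X$ rather than as a cokernel is precisely what makes both halves of the statement fall out at once.
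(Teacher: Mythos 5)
Your proof is correct, and it reaches the conclusion by a genuinely different and more economical route than the paper. The paper works at the module level: it forms the pullback $P$ of $Y \to X\oplus C$ along the inclusion $X \hookrightarrow X\oplus C$, obtains short exact sequences $0\to Z\to P\to X\to 0$ and $0 \to P \to Y \to \cdots$, and then defines $F'$ by the induced presentation $0\to \Hom_R(-,Z)\to\Hom_R(-,P)\to\Hom_R(-,X)\to F'\to 0$, reading off $\rho$ and the identification of $F/\Im\rho$ as a quotient of $\Hom_R(-,C)$ from the resulting commutative diagram of functors. You instead stay entirely at the functor level: you take $F'=\Im(\pi\iota_X)$ with $\rho$ the inclusion, and deduce that the induced map $\Hom_R(-,C)\to F/F'$ is epi because the epimorphism $\Hom_R(-,X)\oplus\Hom_R(-,C)\to F/F'$ kills the $X$-summand. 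Each step checks out (in particular the formal fact that if $g\colon A\oplus B\twoheadrightarrow Q$ has $g|_A=0$ then $g|_B$ is epi, and the fact that $\modC$ being abelian keeps $F'$ and $F/F'$ finitely presented), and your construction yields the same subobject $\Im\rho\subseteq F$ as the paper's, since the paper's $\rho$ also factors as $\pi\iota_X$ composed with the cokernel projection. What the paper's pullback buys is an explicit presentation of $F'$ by a short exact sequence of MCM modules, keeping the construction inside the standard machinery of $\smodC$; what your argument buys is brevity, and it delivers exactly what is used later in Proposition \ref{B8}, namely epimorphisms $\Hom_R(-,X)\twoheadrightarrow\Im\rho$ and $\Hom_R(-,C)\twoheadrightarrow F/\Im\rho$. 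Your closing remark about the na\"ive guess $F'=\Coker(\phi_X)$ is also a fair warning, though note the paper avoids that trap differently, by replacing $\Hom_R(-,Y)$ with $\Hom_R(-,P)$ rather than by passing to an image.
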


\begin{proof}
The following diagram is obtained by the taking pullback:  
$$
\begin{CD}
@. @.0 @.0 @.\\
@.@.  @AAA  @AAA \\
@.@.C  @= C \\
@.@.  @AAA  @AAA \\
0 @>>> Z @>>> Y @>>> X\oplus C @>>>0\\
@. @| @AAA  @AAA \\
0 @>>> Z @>>> P @>>> X @>>>0\\
@.@.  @AAA  @AAA \\
@. @.0 @.0. @.\\
\end{CD}
$$
Thus we get 
$$
\begin{CD}
@. @. @.0 @.0 @.\\
@.@.  @.  @AAA @AAA\\
@.@. @. \Hom _R(-, C) @>>> F/\Im \rho @>>>0 \\
@.@. @.  @AAA @AAA \\
0 @>>> \Hom _R(-, Z) @>>> \Hom _R(-, Y) @>>> \Hom _R(-, X\oplus C) @>>> F@>>>0\\
@. @| @AAA  @AAA @A{\rho}AA \\
0 @>>>\Hom _R(-, Z) @>>> \Hom _R(-, P) @>>> \Hom _R(-, X) @>>> F' @>>>0\\
@.@.  @AAA  @AAA \\
@. @.0 @.0. @.\\
\end{CD}
$$
\end{proof}

\begin{proposition}\label{B8}
The functor $\sHom _R (-, I)$ is simple in $\smodC/\smodC_1$. 
\end{proposition}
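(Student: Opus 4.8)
The plan is to verify the criterion of Lemma \ref{A2} for $\AA = \smodC$ and the Serre subcategory $\SS = \smodC _1$. Since $\sHomR (-, I)$ does not belong to $\smodC _1$ (as observed in the remark following Proposition \ref{B6}), it remains to show that for every subfunctor $V \subseteq \sHomR (-, I)$ in $\smodC$, either $V \in \smodC _1$ or $\sHomR (-, I)/V \in \smodC _1$. The starting observation is that $V(I)$ is stable under the action of $\sEnd _R (I)$ induced by the functor structure (for $\lambda \in \sEnd _R(I)$ and $v \in V(I)$ one has $v\circ\lambda = \sHomR (\lambda , I)(v) \in V(I)$), so $V(I)$ is an ideal of $\sEnd _R (I) \cong k[\![y]\!]$ by Lemma \ref{B2} (3). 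Hence $V(I) = 0$ or $V(I) = y^m k[\![y]\!]$ for some $m \geq 0$, and I treat the two cases separately.

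Before the cases I would record that the functor $H_m = \Coker (\sHomR (-, I) \xrightarrow{y^m} \sHomR (-, I))$ of (\ref{B3-1}) lies in $\smodC _1$ for every $m$. This follows by induction from Proposition \ref{B4}: the surjection $\sHomR (-, I) \xrightarrow{y^{m-1}} y^{m-1}\sHomR (-, I)$ carries $y\sHomR (-, I)$ onto $y^m \sHomR (-, I)$, so $y^{m-1}\sHomR (-, I)/y^m \sHomR (-, I)$ is a quotient of $H_1 \in \smodC _1$, and $H_m$ is then an extension of $H_{m-1}$ by this quotient. Now suppose $V(I) = y^m k[\![y]\!]$. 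Since $y^m \in V(I)$ and $V$ is a subfunctor, for every $f \in \sHomR (M, I)$ the element $y^m \circ f = \sHomR (f, I)(y^m)$ lies in $V(M)$; hence $V \supseteq y^m \sHomR (-, I)$. Consequently $\sHomR (-, I)/V$ is a quotient of $H_m$, so it lies in $\smodC _1$ (in particular $m=0$ gives $V = \sHomR(-,I)$).

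The remaining case $V(I) = 0$ is where the real work lies. Here I would use that $V$ is finitely generated, say by elements $\xi _i \in V(M_i)$ with $M_i$ indecomposable. Restricting along an inclusion $I \hookrightarrow M_i$ maps $V(M_i)$ into $V(I) = 0$, so the $I$-component of each $\xi _i$ vanishes and we may assume each generator lives on a copy $I_{n_i}$. Since $\sHomR (I_{n}, I)$ is finite dimensional (Lemma \ref{B2} (2)) and is a module over the local ring $\sEnd _R (I) \cong k[\![y]\!]$, the element $y$ acts nilpotently on it; hence $y^N \circ \xi _i = 0$ for a single $N \gg 0$ uniform in the finitely many $i$. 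Because $V$ is generated by the $\xi_i$ and $\sHomR (-, y^N)$ kills each $\xi_i \circ g$, it follows that $V \subseteq \Ker \bigl( \sHomR (-, I) \xrightarrow{y^N} \sHomR (-, I) \bigr)$. The ($2$-periodic) long exact sequence attached to the triangle coming from $0 \to I_N \to I \oplus R \to I \to 0$ of Lemma \ref{B3} identifies this kernel with the image of the natural map $\sHomR (-, I_N) \to \sHomR (-, I)$, which is a quotient of $\sHomR (-, I_N) \in \smodC _1$ (Proposition \ref{B6}). As $\smodC _1$ is a Serre subcategory, $V \in \smodC _1$, and the proof is complete.

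The main obstacle is exactly this last case. One must produce a \emph{single} exponent $N$ annihilating all of $V$ — which is why finite generation of $V$ and the nilpotency of $y$ on each finite dimensional $\sHomR (I_n, I)$ are both essential — and then recognize $\Ker (\sHomR (-, y^N))$ as a quotient of $\sHomR (-, I_N)$ through the matrix-factorization periodicity. By contrast, the case $V(I) \neq 0$ is immediate once one notices that membership of $y^m$ forces $V$ to contain the entire image $y^m \sHomR (-, I)$, reducing the quotient to a quotient of $H_m$.
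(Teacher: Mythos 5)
Your proof is correct, but it is organized quite differently from the paper's, and the comparison is worth recording. The paper argues through presentations: given $0 \to V \to \sHomR(-,I) \to C \to 0$, it splits into the case where the presenting object $X$ of $V$ lies in $\CM_0(R)$ (then $V$ is a quotient of $\sHomR(-,X)$ and Proposition \ref{B6} finishes) and the case where $X$ contains $I$ as a summand; in the latter case it invokes Lemma \ref{B7} to reduce to a presentation $\sHomR(-,I^{\oplus l}) \xrightarrow{\varphi} \sHomR(-,I) \to C \to 0$ with $\varphi = (y^{n_1}\,\cdots\,y^{n_l})$, computes the mapping cone of $\varphi$ as an explicit matrix factorization, splits off $I^{\oplus l-1}$ using that each $y^{n_i}$ is a nonzerodivisor on $\sEnd_R(I)\cong k[\![y]\!]$, and concludes that $C$ embeds into $\sHomR(-,I_{n_1}[1])$. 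You instead base the dichotomy on the ideal $V(I) \subseteq \sEnd_R(I) \cong k[\![y]\!]$, which is equivalent in substance (a nonzero $\varphi$ in the paper's second case forces $V(I)\neq 0$) but lets you bypass Lemma \ref{B7} and the mapping-cone/splitting computation entirely: when $V(I)=(y^m)$ you show the quotient is a quotient of $H_m=\Coker(y^m)$, whose membership in $\smodC_1$ follows from Proposition \ref{B4} by the filtration $y^i\sHomR(-,I)/y^{i+1}\sHomR(-,I)$, and when $V(I)=0$ you reduce to generators over $\CM_0(R)$ and apply Proposition \ref{B6}. Your trade-off is less explicit information about $C$ in exchange for a shorter, more module-theoretic argument. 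Two minor points, neither a gap: in the $V(I)=0$ case the detour through $\Ker(y^N)$ is unnecessary, since once every generator lives on some $I_{n_i}$ the functor $V$ is already a quotient of $\bigoplus_i \sHomR(-,I_{n_i}) \in \smodC_1$ (this is exactly the paper's first case); and the sentence about ``restricting along an inclusion $I \hookrightarrow M_i$'' should simply say that a generator carried by an indecomposable $M_i \cong I$ or $M_i\cong R$ vanishes because $V(I)=V(R)=0$.
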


\begin{proof}
Let
\begin{equation}\label{sequence of I}
0 \to V \to \sHomR (-, I) \to C \to 0
\end{equation}
be an admissible exact sequence in $\smodC$. 
Since $V \in \smodC$, we may assume that $V$ has the sequence $0 \to \Hom_R (-, Z)\to \Hom_R (-, Y)\to \Hom_R (-, X)\to V \to 0$ for some $X, Y, Z \in \CMR$. 
If $X \in \CM_0 (R)$, $V \in \smodC_1 $ because $V$ is an image of $\sHomR (-, X)$ (\cite[(4.16)]{Y}). 
Thus the proof is completed. 
Therefore we assume that $X$ contains $I$ as a direct summand. 
Let $X \cong I^{\oplus l} \oplus M$ where $M \in \CM_0 (R)$. 
By Lemma \ref{B7}, there exits the sequence $V' \xrightarrow{\rho} V \to V/\Im \rho \to 0$ such that $V'$ and $V/\Im \rho$ are images of $\Hom_R(-, I^{\oplus l})$ and $\Hom_R (-, M)$ respectively. 
Then the commutative diagram
$$
\begin{CD}
@.0@.@.\\
@.@AAA @.@.\\
@.V/\Im \rho @.0 @.0 \\
@.@AAA @AAA@AAA\\
0@>>> V @>>> \sHomR (, I)@>>> C @>>>0\\
@.@AAA @| @AAA @. \\
0@>>> \Im \rho @>>> \sHomR (, I)@>>> C' @>>>0\\
@.@AAA @AAA @AAA @.\\
@.0@.0@.V/\Im \rho @.\\
@.@.@. @AAA @.\\
@.@.@.0@.\\
\end{CD}
$$
is obtained. 
Since $V/\Im \rho \in \smodC_0$, $V \cong \Im \rho$ and $C \cong C'$ in $\smodC/ \smodC_0$. 
Hence we may consider $0 \to \Im \rho \to \sHomR (-, I) \to C' \to 0$ instead of the sequence (\ref{sequence of I}). 
Now we remark that $\Im \rho$ is also the image of $\Hom_R(-, I^{\oplus l})$, so that we have 
$$
\sHomR (- , I^{\oplus l}) \xrightarrow{\varphi}  \sHomR (-, I) \to C' \to 0. 
$$
By Yoneda's lemma, $\varphi \in \sHomR (I^{\oplus l}, I)$. 
Since $\sHomR (I, I)\cong k[\![y]\!]$, $\varphi$ is of the form $\left(  y^{n_1} \  y^{n_2} \  \cdots \ y^{n_l}\right)$, where $n_1 \leq n_2 \leq \cdots \leq n_l$. 
The mapping cone of $C(\varphi )$ is $I_{n_1} \oplus I^{\oplus l-1}$. 
Actually, 
$$
C(\varphi ) = \Coker \left( \begin{smallmatrix}x &&&y^{n_1}\\  &\ddots& &\vdots \\  &&x&y^{n_l}\\  &&&x\\ \end{smallmatrix} \right): R^{\oplus l+1} \to R^{\oplus l+1}. 
$$
By considering basic matrix transformations, one can obtain the isomorphism $C(\varphi ) \cong I_{n_1} \oplus I^{\oplus l-1}$. 
Since $C(\varphi )[-1] \cong (I_{n_1} \oplus I^{\oplus l-1})[-1] \cong I_{n_1} \oplus I^{\oplus l-1}$, we can make the triangle: 
$$
(I_{n_1} \oplus I^{\oplus l-1}) \to I^{\oplus l} \xrightarrow{\varphi} I \xrightarrow{\psi} (I_{n_1} \oplus I^{\oplus l-1})[1]. 
$$
Now we claim that $I^{\oplus l-1}$ is split out. 
Set $\psi = \left( \begin{smallmatrix} \psi_1 \\ \psi_2 \\ \vdots \\ \psi_l \end{smallmatrix}
\right)$, and we shall show $\psi _i = 0$ for $i \geq 2$. 
Since $\psi \circ \varphi=0$, 
$$
\left( \begin{smallmatrix}\psi_1 y^{n_1}& \psi_1y^{n_2}  &\cdots &\psi_1 y^{n_l}\\ 
\psi_2 y^{n_1}& \psi_2y^{n_2}  &\cdots &\psi_2 y^{n_l}\\
\vdots&&\ddots&\vdots\\ \psi_l y^{n_1}& \psi_ly^{n_2}  &\cdots &\psi_l y^{n_l}\\\end{smallmatrix}\right) = 0. 
$$ 
Then, for $i \geq 2$, $\psi _i y^{n_i} = y^{n_i} \psi _i= 0$ in $ \sHomR (I, I)$. 
Since $\sHomR (I, I) \cong k[\![y]\!]$, $y^{n_i}$ is a non zero divizor on $ \sHomR (I, I)$, so that $\psi _i = 0$. 
Consequently, $\psi _i = 0$ for all $i \geq 2$. 
Thus we have the diagram:
$$
\begin{CD}
I @>{\psi _1}>> I_{n_1}  @>{\alpha '}>> C(\psi_1) @>>> I\\
@| @VV{\left( \begin{smallmatrix}1\\0\end{smallmatrix}\right)}V @VV{\gamma}V @|\\
I@>{\left( \begin{smallmatrix}\psi_1\\0\end{smallmatrix}\right)}>> I_{n_1} \oplus I^{\oplus l-1} @>{\alpha}>> I^{\oplus l} @>>> I\\ 
@VVV @VV{\left( \begin{smallmatrix}0&1\end{smallmatrix}\right)}V @VV{\beta}V @VVV\\
0 @>>> I^{\oplus l-1}@= I^{\oplus l-1} @>>>0. 
\end{CD}
$$
One can see that $\beta$ is a retraction. 
Thus $\gamma$ is a section. 
Moreover $C(\psi_1 ) \cong I$. 
Therefore we get
$$
\begin{CD}
I@>{\left( \begin{smallmatrix}\psi_1\\0\end{smallmatrix}\right)}>> I_{n_1} \oplus I^{\oplus l-1} @>{\left( \begin{smallmatrix}\alpha' &0 \\ 0&1\end{smallmatrix}\right)}>> I^{\oplus l} @>>> I\\ 
@| @| @VV{\left( \begin{smallmatrix}\gamma & \beta \end{smallmatrix}\right)}V @|\\
I@>{\left( \begin{smallmatrix}\psi_1\\0\end{smallmatrix}\right)}>> I_{n_1} \oplus I^{\oplus l-1} @>{\alpha}>> I^{\oplus l} @>>> I. \\
 \end{CD}
$$
Since $\beta$ (resp. $\gamma$) is a retraction (resp. a section), $\left( \begin{smallmatrix}\gamma & \beta \end{smallmatrix}\right)$ gives an isomorphism.  
Therefore we may assume that $C'$ has the presentation:
$$
\sHomR(-, I_{n_1})\to \sHomR(-, I) \to \sHomR(-, I) \to C' \to 0. 
$$
It implies that $C'$ is a subfunctor of $\sHomR(-, I_{n_1}[1]) \cong \sHomR(-, I_{n_1})$. 
By Proposition \ref{B6} the length of $C'$, hence $C$, has finite length in $\smodC/\smodC_0$.
This observation shows  that $\sHomR ( - , I)$ is simple in $\smodC/\smodC_1$. 
\end{proof}

\begin{proof}[Proof of Theorem \ref{B1}.]
For each $F \in \smodC$, we have an epimorphism $\Hom_R(-, X) \to F \to 0$. 
In particular, the epimorphism
$$
\sHom_R(-, X) \to F \to 0
$$
exists, where $X \in \CMR$. 
From the former proposition, $\ell (\sHomR (-, X)) < \infty$ in $\smodC / \smodC_1$. 
Hence $\ell(F ) < \infty$ in $\smodC / \smodC_1$. 
This shows that $\KGdim \smodC = 2$. 
\end{proof}

%%%%%%%%%%%%%%%%%%%%%%%%%%%%% 
%%%%%%%%%%%%%%%%%%%%%%%%%%%%% 
%%%%%%%%%%%%%%%%%%%%%%%%%%%%% 
\section{Krull--Gabriel dimension of $\smod (\CM (k[\![x,y,z]\!]/(x^2y+z^2)))$}\label{D}

In this section, we investigate the Krull--Gabriel dimension over a hypersurface of type $(D_{\infty})$. 
We shall calculate it when the hypersurface is of 2-dimensional, namely,
$$
k[\![x, y, z]\!]/(x^2y+z^2). 
$$
We propose to generalize the calculation for high (or low) dimensional cases in Section \ref{C}. 
As pointed out in \cite[Remark 5.10.]{BD08}, the stable categories $\sCM (k[\![x,y,z]\!]/(x^2y+z^2))$ and $\sCM (k[\![x,y]/(x^2))$ are similar. 
For example the object is isomorphic to its first syzygy module.  
Actually we can apply the arguments in Section \ref{B} to compute the Krull-Gabriel dimension of $\smod (\CM (k[\![x,y,z]\!]/(x^2y+z^2)) )$.

Let $\Rs = k[\![x,y,z]\!]/(x^2y+z^2)$. 
By \cite[Proposition 14.19]{LW12} (see also \cite[(5.7)]{BD08}), all non free indecomposable MCM $\Rs$-modules are
\begin{align*}
&I=\Coker  \left( \begin{smallmatrix} z & -xy \\ x & z\end{smallmatrix} \right) :{\Rs}^{\oplus 2} \to {\Rs}^{\oplus 2} ,\,
M_0 = \Coker \left( \begin{smallmatrix} z & -y \\ x^2 & z \end{smallmatrix} \right) :{\Rs}^{\oplus 2} \to {\Rs}^{\oplus 2}  ,
\\
&M_n =\Coker \left( \begin{smallmatrix} z&0&x&y^n\\0&z&0&-x\\xy&y^{n+1}&z&0\\0&-xy&0&z\\\end{smallmatrix}\right) : {\Rs} ^{\oplus 4} \to {\Rs} ^{\oplus 4} ,\,
\\
&N_n = \Coker \left( \begin{smallmatrix} z&0&xy&y^n\\0&z&0&-x\\x&y^n&z&0\\0&-xy&0&z\\\end{smallmatrix}\right) : {\Rs}^{\oplus 4} \to {\Rs}^{\oplus 4} \quad  (n\ge1).
\end{align*}
Note that $I \cong \syz _{\Rs} ^1 I$, $M_n \cong \syz _{\Rs} ^1 M_n$ and $N_n \cong \syz _{\Rs} ^1  N_n$. 
For convenience, we put
$$
L_n := \left\{ \begin{array}{l}
M_{\frac{n-1}{2}} \quad n : odd,\\
N_{\frac{n}{2}} \quad n : even.\\
\end{array}
\right.
$$

\begin{lemma}\label{D1}
Let $\Rs$, $I$, $L_n$ be as above. 
Then the following statements hold.  
\begin{enumerate}[\rm(1)]
\item $\dim _k \sHomRs (L_m, L_n) = \left\{ \begin{array}{l} 2n  \quad m \geq n, \\2m \quad m \leq n. \end{array}\right.$
\item $\dim _k \sHomRs (I, L_n) = \dim _k \sHomR (L_n, I) = m $ for $n \geq 1$. 
\item $\dim _k \sHomRs (I, I) = \infty$. 
\end{enumerate}
\end{lemma}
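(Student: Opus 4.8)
The plan is to transplant the strategy of Section~\ref{B} essentially verbatim, exploiting that $\Rs$ is a hypersurface (hence Gorenstein) and that every non free indecomposable satisfies $I \cong \syz_{\Rs}^1 I$, $M_n \cong \syz_{\Rs}^1 M_n$ and $N_n \cong \syz_{\Rs}^1 N_n$, so that $X \cong X[-1]$ in $\sCMRs$ for all the modules in play. As in the proof of Lemma~\ref{B2}, the Gorenstein isomorphism $\sHomRs (M,N) \cong \Ext^1_{\Rs}(M, \syz_{\Rs}^1 N)$ reduces every stable Hom computation to an $\Ext^1$ computation against a module that is its own reduced syzygy. First I would compute $\dim_k \sHomRs (-, L_1)$ directly by writing down the $2$-periodic free resolution determined by the presentation matrices of $M_m$ and $N_m$ listed above, and reading off $\ker/\Im$ of the resulting periodic complex exactly as in Lemma~\ref{B2}. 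I expect this to produce the same normalization as the base case $I_1$ of the $(A_\infty)$ calculation, namely $\dim_k \sHomRs (L_m, L_1)=2$ for all $m$ and $\dim_k \sHomRs (I, L_1)=1$.

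Next I would establish the almost split sequences among the $L_n$. Because the definition $L_n = M_{(n-1)/2}$ for $n$ odd and $L_n = N_{n/2}$ for $n$ even interlaces the two families into a single chain, I expect the AR sequences to take the uniform shape
$$
0 \to L_n \to L_{n+1}\oplus L_{n-1} \to L_n \to 0 \qquad (n\ge 1),
$$
with $L_0 \cong \Rs$, precisely mirroring the sequence $0\to I_n \to I_{n+1}\oplus I_{n-1}\to I_n\to 0$ of the $(A_\infty)$ case. Feeding these into Proposition~\ref{B9} gives the same recursion, and an induction on $n$ then yields the analogue of equation~(\ref{B2-1}),
$$
\dim_k \sHomRs (-, L_n) = n\,\dim_k \sHomRs (-, L_1) - \sum_{i=1}^{n-1} 2(n-i)\,\underline{\mu}(-, L_i).
$$
Evaluating this identity at $L_m$, splitting into the cases $m\ge n$ and $m\le n$ and using $\underline{\mu}(L_m, L_i)=0$ unless $m=i$, gives statement~(1); evaluating it at $I$, where $\underline{\mu}(I, L_i)=0$, gives $\dim_k \sHomRs (I, L_n)=n$, which is one half of statement~(2).

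For the remaining half of (2) I would invoke AR duality (Remark~\ref{B9}): since $\tau M \cong M$ for all these modules, one has $\dim_k \sHomRs (I, L_n)=\dim_k \sHomRs (L_n, I)$, so the two quantities in (2) coincide automatically. Statement~(3) is an independent direct computation: using the $2$-periodic resolution of $I$ coming from its presentation matrix, I would compute $\Ext^1_{\Rs}(I, I)$ and exhibit it as an infinite-dimensional $k$-vector space (I expect an isomorphism with $k[\![y]\!]$, paralleling $\Ext^1_R(I,I)\cong k[\![y]\!]$ in Section~\ref{B}), whence $\dim_k \sHomRs (I,I)=\infty$.

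The main obstacle will be the two explicit inputs rather than the formal scaffolding: the base-case computation of $\dim_k \sHomRs (-, L_1)$ and the identification of the AR sequences. The presentation matrices here are $4\times 4$, so the kernel/image bookkeeping is heavier than in Lemma~\ref{B2}, and one must verify carefully that the AR quiver of $\sCMRs$ genuinely assembles the $M_m$ and $N_m$ into the single $A_\infty$-type chain $L_0, L_1, L_2,\dots$ with the stated almost split sequences. Once these two facts are confirmed, the induction and the extraction of (1)--(3) are formally identical to the $(A_\infty)$ case.
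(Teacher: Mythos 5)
Your scaffolding coincides exactly with the paper's: the AR triangles $L_n \to L_{n+1}\oplus L_{n-1}\to L_n\to L_n[1]$ fed into Proposition \ref{B9} give the recursion, induction yields the analogue of (\ref{B2-1}), and evaluating at $L_m$ and at $I$ (where $\underline{\mu}(-,L_i)$ vanishes except at $L_i$ itself) produces (1) and half of (2), with AR duality supplying the other half. Where you genuinely diverge is in the two computational inputs. For the base case $\dim_k\sHomRs(-,L_1)$ and for statement (3), you propose a direct attack on the $2$-periodic complexes coming from the $4\times 4$ (resp.\ $2\times 2$) presentation matrices over the two-dimensional ring $\Rs$. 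The paper instead reduces to dimension one: it invokes the adjoint pair $(\AA,\BB)$ of Corollary \ref{C12} between $\sCMR$ and $\sCMRs$ for $R=k[\![x,y]\!]/(x^2y)$, uses $\sHom_{\Rs}(\AA(-),L_1)\cong\sHom_R(-,\BB(L_1))$ with $\BB(L_1)=R/(x^2)\oplus R/(y)$ and $\BB(I)=R/(x)\oplus R/(xy)$, and then computes $\Ext^1_R$ against the ideals $(x^2)$, $(y)$, $(x)$, $(xy)$ via small matrix factorizations. The paper's route costs a forward reference to Section \ref{C} but keeps every computation at the level of ideals in a curve singularity; your route is self-contained but the $4\times 4$ kernel/image bookkeeping over $k[\![x,y,z]\!]/(x^2y+z^2)$ is substantially heavier and is exactly the step you have not carried out, so the proposal stands or falls on that verification. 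Two small corrections: your expectation $\Ext^1_{\Rs}(I,I)\cong k[\![y]\!]$ is not quite right --- by Remark \ref{D2} one has $\sHomRs(I,I)\cong k[\![y]\!]\oplus k[\![y]\!]$, generated by $1$ and $\frac{z}{x}$ over $k[\![y]\!]$ --- though infinite-dimensionality, which is all (3) needs, survives; and your $L_0\cong\Rs$ versus the paper's $L_0=0$ is immaterial since free modules vanish in the stable category, so the recursion is unaffected.
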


\begin{proof}
We compute them by the same methods used in Lemm \ref{B2}. 
Since there are AR triangles $L_{n} \to L_{n+1} \oplus L_{n-1} \to L_{n} \to L_{n}[1]$ for $n \geq1$, here $L_0 = 0$, the equation 
\begin{equation}\label{D1-1}
\dim _k \sHomRs (-, L_{n})= n\dim _k \sHomRs (-, L_{1})-\sum _{i=1} ^{n-1} 2(n-i)\underline{\mu} (-, L_i) 
\end{equation}
holds. 
Hence it is enough to compute the dimension of $\sHomRs (-, L_{1})$. 
To do this, we use the results due to Kn\"orrer (see Section \ref{C} for the detail). 
Let $R=k[\![x,y]\!]/(x^2y)$. 
All non free indecomposable MCM $R$-modules are
\begin{align*}
&R/(x),\,
R/(xy),\,
R/(x^2),\,
R/(y),  \\
&M_n ^+ =\Coker \left( \begin{smallmatrix} x&y^n\\0&-x\\\end{smallmatrix}\right) : {R} ^{\oplus 2} \to {R} ^{\oplus 2} ,\, M_n ^- =\Coker \left( \begin{smallmatrix} xy&y^{n+1}\\0&-xy\\ \end{smallmatrix}\right) : {R} ^{\oplus 2} \to {R} ^{\oplus 2} ,\\
&N_n ^+ =\Coker \left( \begin{smallmatrix} xy&y^n\\0&-x\\\end{smallmatrix}\right) : {R} ^{\oplus 2} \to {R} ^{\oplus 2} ,\, N_n ^- =\Coker \left( \begin{smallmatrix} x&y^{n}\\0&-xy\\ \end{smallmatrix}\right) : {R} ^{\oplus 2} \to {R} ^{\oplus 2}, (n\ge1).
\end{align*}
As mentioned in Corollary \ref{C12}, there is an adjoint pair of functors $(\AA, \BB)$ between $\sCMR$ and $\sCMRs$. 
Particularly we have the isomorphism $\sHom_{\Rs} (\AA (-), L_1) \cong \sHom _R (-, \BB (L_1))$. 
Notice that $\AA (R/(x)) = \AA (R/(xy)) = I$, $\AA (R/(x^2)) = \AA (R/(y)) = L_1$, $\AA (M_n ^+) = \AA (M_n ^- ) = L_{2n+1}$ and $\AA (N_n ^+) = \AA (N_n ^- ) = L_{2n}$. 
Hence we may compute the dimension of $\sHom _R (-, \BB (L_1))$ instead of $\sHom_{\Rs} (-, L_1)$. 
Since $\BB (L_1) = R/(x^2) \oplus R/(y)$, we have $\dim_k \sHom _R (-, \BB (L_1)) = \dim_k \sHom _R (-, R/(x^2) \oplus R/(y))$. 

\begin{claim}
We have the following equality. 
\begin{enumerate}[\rm(i)]
\item $\dim_k \sHom _R (R/ (y) , R/(x^2) \oplus R/(y)) = 2$.
\item $\dim_k \sHom _R (M_n ^+ , R/(x^2) \oplus R/(y)) = 2$. 
\item $\dim_k \sHom _R (N_n ^+ , R/(x^2) \oplus R/(y)) = 2$. 
\item $\dim_k \sHom _R (R/(x) , R/(x^2) \oplus R/(y)) = 1$. 
\item $\dim_k \sHom _R (R/(x) , R/(x) \oplus R/(xy)) = \infty$. 
\end{enumerate}
\end{claim}

\begin{proof}
(i) First we note that $R/(x^2) \cong (y)$ and $R/(y) \cong (x^2)$. 
It follows from the complexes
$$
\begin{CD}
\cdots @>>> (y) @>{y}>> (y) @>{x^2}>>(y)  @>>> \cdots 
\end{CD}
$$
and 
$$
\begin{CD}
\cdots @>>> (x^2) @>{y}>> (x^2) @>{x^2}>>(x^2)  @>>> \cdots 
\end{CD}
$$
that $\Ext _R ^1 (R/(y), (y)) \cong (y)/(y^2) \cong k^{\oplus 2}$ and $\Ext _R ^1 (R/(y), (x^2)) = (0)$. 
Thus $\dim_k \sHom _R (R/ (y) , R/(x^2) \oplus R/(y)) = \dim _k \Ext _R ^1 (R/(y), (y)) = 2$.

(ii) We have the complex:
$$
\begin{CD}
\cdots @>>> {(x^2)}^{\oplus 2} @>{\beta = \left( \begin{smallmatrix}x & 0 \\ y^n & -x \end{smallmatrix}\right)}>> {(x^2)}^{\oplus 2} @>{\alpha = \left( \begin{smallmatrix}xy & 0 \\ y^{n+1} & -xy \end{smallmatrix}\right)}>>{(x^2)}^{\oplus 2}  @>>> \cdots. 
\end{CD}
$$ 
Suppose that $\alpha (\left( \begin{smallmatrix} a \\ b \end{smallmatrix}\right)) = \left( \begin{smallmatrix} axy \\ ay^{n+1}-bxy \end{smallmatrix}\right) = 0$. 
Then $axy \in (x^2y)$, so that $a \in (x^2) \cap (x) = (x^2)$. 
Since $ay^{n+1}-bxy \in (x^2y)$ and $a \in (x^2)$, $b \in (x^2) \cap (x) = (x^2)$. 
Thus $\ker \alpha \cong (x^2) \oplus (x^2)$. 
Since $\beta (\left( \begin{smallmatrix} a \\ b \end{smallmatrix}\right))= \left( \begin{smallmatrix} ax \\ ay^{n}-bx \end{smallmatrix}\right)$, $\Im \beta \cong (x^3) \oplus (x^3)$.  
We have that  $\Ext _R ^1 (M_n ^+, (x^2)) = \ker \alpha / \Im \beta \cong (x^2)/(x^3) \oplus (x^2)/(x^3)\cong k^{\oplus 2}$.  
Also, we have the complex: 
$$
\begin{CD}
\cdots @>>> {(y)}^{\oplus 2} @>{\beta = \left( \begin{smallmatrix}x & 0 \\ y^n & -x \end{smallmatrix}\right)}>> {(y)}^{\oplus 2} @>{\alpha = \left( \begin{smallmatrix}xy & 0 \\ y^{n+1} & -xy \end{smallmatrix}\right)}>>{(y)}^{\oplus 2}  @>>> \cdots.  
\end{CD}
$$
Let $\left( \begin{smallmatrix} a \\ b \end{smallmatrix}\right) \in (y)^{\oplus 2}$ be an element such that $\alpha (\left( \begin{smallmatrix} a \\ b \end{smallmatrix}\right))=0$. 
Then $a$ (resp. $b$) is in  $(y) \cap (x) = (xy)$ (resp. $(y) \cap (x, y^{n+1}) = (xy, y^{n+1})$). 
Since $\Im \beta = (xy) \oplus (xy, y^{n+1})$, $\Ext _R ^1 (M_n ^+, (y)) \cong (xy)/(xy) \oplus (xy, y^{n+1})/(xy, y^{n+1}) = (0)$.
Therefore $\dim_k \sHom _R (M_n ^+ , R/(x^2) \oplus R/(y)) = \dim_k \Ext _R ^1 (M_n ^+ , R/(x^2) \oplus R/(y)) = 2$. 

(iii) Similarly, the complexes
$$
\begin{CD}
\cdots @>>> {(x^2)}^{\oplus 2} @>{\left( \begin{smallmatrix}xy & 0 \\ y^n & -x \end{smallmatrix}\right)}>> {(x^2)}^{\oplus 2} @>{\left( \begin{smallmatrix}x & 0 \\ y^{n+1} & -xy \end{smallmatrix}\right)}>>{(x^2)}^{\oplus 2}  @>>> \cdots 
\end{CD}
$$
and 
$$
\begin{CD}
\cdots @>>> {(y)}^{\oplus 2} @>{\left( \begin{smallmatrix}xy & 0 \\ y^n & -x \end{smallmatrix}\right)}>> {(y)}^{\oplus 2} @>{\left( \begin{smallmatrix}x & 0 \\ y^{n+1} & -xy \end{smallmatrix}\right)}>>{(y)}^{\oplus 2}  @>>> \cdots 
\end{CD}
$$
say that $\Ext _R ^1 (N_n ^+ , (x^2)) \cong (x^2)/(x^3) \cong k$ and $\Ext _R ^1 (N_n ^+ , (y)) \cong (xy, y^n)/(xy, y^{n+1}) \cong k$, so that $\dim_k \sHom _R (N_n ^+ , R/(x^2) \oplus R/(y)) = \dim_k \Ext _R ^1 (N_n ^+, (x^2) \oplus (y))= 2$.

(iv) From the complexes
$$
\begin{CD}
\cdots @>>> (y) @>{x}>> (y) @>{xy}>>(y)  @>>> \cdots 
\end{CD}
$$
and 
$$
\begin{CD}
\cdots @>>> (x^2) @>{x}>> (x^2) @>{xy}>>(x^2)  @>>> \cdots 
\end{CD}
$$
one has $\Ext _R ^1 (R/(x), (y)) =(0)$ and $\Ext _R ^1 (R/(x), (x^2)) \cong (x^2)/(x^3) \cong k$. 
Therefore $\dim_k \sHom _R (R/(x) , R/(x^2) \oplus R/(y)) = \dim _k \Ext _R ^1 (R/(x), (x^2)) = 1$. 

(v) Notice that $R/(x) \cong (xy)$ and $R/(xy) \cong (x)$. 
One can show from the complexes
$$
\begin{CD}
\cdots @>>> (xy) @>{x}>> (xy) @>{xy}>>(xy)  @>>> \cdots 
\end{CD}
$$
and 
$$
\begin{CD}
\cdots @>>> (x) @>{x}>> (x) @>{xy}>>(x)  @>>> \cdots 
\end{CD}
$$
that $\Ext _R ^1 (R/(x), (xy)) \cong (xy)$ and $\Ext _R ^1 (R/(x), (x)) \cong (x)/(x^2)$. 
This implies (v). 
\end{proof}

The claim (i), (ii), (iii), (iv) and the equation (\ref{D1-1}) give the assertion (1), (2). 
The claim (v) also gives (3). 
\end{proof}

\begin{remark}\label{D2}
As mentioned in the proof of Lemma \ref{D1}(3), $\sHomRs (I, I)$ is isomorphic to $k[\![y]\!] \oplus k[\![y]\!]$. 
Let $T = S[\![z]\!]/(f+z^2 )$ with $f \in \m _S$. 
Then there is a one-to-one correspondence between the isomorphism classes of MCM $T$-modules and the equivalence classes of square matrices $\varphi$ with entries in $S$ such that $\varphi ^2 = -f$.
We also remark that giving a $T$-homomorphism $g : M \to N$ is equivalent to giving an $S$-homomorphism $\alpha : S^{\oplus m} \to S^{\oplus n}$ such that $\alpha \circ \varphi _M = \varphi _N \circ \alpha$. 
By using this, one has $\Hom_{\Rs} (I, I) \cong \left\{ \left( \begin{smallmatrix}a&-cy \\ c &a \end{smallmatrix}\right) \verb+|+ a, c \in k[\![x, y]\!] \right\}$. 
Note that  $I$ correspondences to the matrix $\left( \begin{smallmatrix}0&-xy \\ x &0\end{smallmatrix}\right)$. 
Moreover, since $\Rs$ correspondences to $\left( \begin{smallmatrix}0&-x^2y \\ 1 &0 \end{smallmatrix}\right)$, $P(I, I)$ is isomorphic to $\left\{ \left( \begin{smallmatrix}ax&-cxy \\ cx &ax \end{smallmatrix}\right) \verb+|+ a, c \in k[\![x, y]\!] \right\}$, so that  we have $\sHomRs (I, I) \cong \left\{ \left( \begin{smallmatrix}a&-cy \\ c &a \end{smallmatrix}\right) \verb+|+ a, c \in k[\![y]\!] \right\}$. 
And we see that $\frac{z}{x} y^n$ (resp. $y^n$) in $\sHomRs (I, I)$ correspondences to $\left( \begin{smallmatrix}0&-y^{n+1} \\ y^n &0 \end{smallmatrix}\right)$ (resp, $\left( \begin{smallmatrix}y^n&0 \\ 0 &y^n \end{smallmatrix}\right)$). 
Therefore $\sHomRs (I, I)$ is generated by $\frac{z}{x}$ and $1$ as a $k[\![y]\!]$-module. 
Now we calculate the mapping cone $C(\varphi) : I \to I$. 
One has 
\begin{align*}
& C(\frac{z}{x} y^n )  = \Coker \left( \begin{smallmatrix} z&-xy&0&y^{n+1}\\x&z&-y^n&0\\0&0&z&xy\\0&0&-x&z\\\end{smallmatrix}\right) :{\Rs } ^{\oplus 4} \to {\Rs }^{\oplus 4} \cong M_n =L_{2n+1}, \\
& C(y^n) = \Coker \left( \begin{smallmatrix} z&-xy&-y^n&0\\x&z&0&-y^n\\0&0&z&xy\\0&0&-x&z\\\end{smallmatrix}\right) :{\Rs } ^{\oplus 4} \to {\Rs }^{\oplus 4} \cong N_n =L_{2n}
\end{align*}
respectively. 
\end{remark}

\begin{theorem}\label{D3}
Let $\Rs$, $I$, $L_n$ be as above. 
Then the following statements hold. 
\begin{enumerate}[\rm(1)]
\item $\KGdim \sHomRs (-, L_n) = 1$ for $n \geq 1$. 
\item $\KGdim \sHomRs (-, I) = 2$. 
\end{enumerate}
Consequently $\KGdim \smodCs = 2$. 
\end{theorem}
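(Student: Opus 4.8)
The plan is to transport the entire development of Section \ref{B} to the ring $\Rs=k[\![x,y,z]\!]/(x^2y+z^2)$, under the dictionary in which the modules $L_n$ play the role of the $I_n$ and $I$ plays the role of the distinguished module with infinite-dimensional stable endomorphisms. By Lemma \ref{D1} the dimension formulas for $\sHomRs(L_m,L_n)$, $\sHomRs(I,L_n)$ and $\sHomRs(I,I)$ coincide with those of Lemma \ref{B2}, and by Remark \ref{D2} the stable endomorphism ring $\sHomRs(I,I)$, although of rank $2$ over $k[\![y]\!]$, is generated by $z/x$ with $(z/x)^2=-y$; hence it is a discrete valuation ring with uniformizer $z/x$, just as $\sHomR(I,I)\cong k[\![y]\!]$ was in the $(A_\infty)$ case. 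This single observation is what makes the parallel proofs run. I would establish (1), then (2), and finally read off the value of $\KGdim\smodCs$ exactly as in the proof of Theorem \ref{B1}.

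For part (1) the lower bound is immediate: by Lemma \ref{D1}(1) the support of $\sHomRs(-,L_n)$ contains every $L_m$ with $m\geq n$, so the functor is not of finite length and lies outside $\smodCs_0$, whence $\KGdim\sHomRs(-,L_n)\geq 1$. For the upper bound I would reproduce Propositions \ref{B4} and \ref{B6}. Starting from the triangle $I\xrightarrow{z/x}I\to L_1\to I[1]$, whose cone is $L_1=M_0$ by Remark \ref{D2}, I would form the associated cokernel functor and show, via the half-dimension count of Proposition \ref{B4} using Lemma \ref{D1}, that it is simple modulo $\smodCs_0$; the analogue of Remark \ref{B5} then yields $\ell(\sHomRs(-,L_1))<\infty$ modulo $\smodCs_0$. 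The AR triangles $L_n\to L_{n+1}\oplus L_{n-1}\to L_n\to L_n[1]$ next drive the induction of Proposition \ref{B6}, giving $\sHomRs(-,L_n)\in\smodCs_1$ for all $n\geq 1$, so $\KGdim\sHomRs(-,L_n)=1$.

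For part (2) the heart is the analogue of Proposition \ref{B8}: that $\sHomRs(-,I)$ is simple in $\smodCs/\smodCs_1$. Given an admissible subfunctor $0\to V\to\sHomRs(-,I)\to C\to 0$, Lemma \ref{B7} splits off the part lying in $\CM_0(\Rs)$ and reduces me, modulo $\smodCs_0$, to a presentation $\sHomRs(-,I^{\oplus l})\xrightarrow{\varphi}\sHomRs(-,I)\to C'\to 0$ with $\varphi\in\sHomRs(I^{\oplus l},I)$. Because $\sHomRs(I,I)$ is a discrete valuation ring, I can reduce $\varphi$ exactly as in Proposition \ref{B8}: after column operations $\varphi$ becomes, up to units, $((z/x)^{n_1},0,\dots,0)$, and its cone is identified with $L_{n_1}\oplus I^{\oplus l-1}$ using the model computations $C((z/x)^{2n})\cong L_{2n}$ and $C((z/x)^{2n+1})\cong L_{2n+1}$ of Remark \ref{D2}. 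The section/retraction argument of Proposition \ref{B8} then splits off the $I^{\oplus l-1}$, so that $C'$ is a subfunctor of $\sHomRs(-,L_{n_1}[1])\cong\sHomRs(-,L_{n_1})$, which has finite length modulo $\smodCs_0$ by part (1); hence $C$ does too, proving simplicity modulo $\smodCs_1$ and so $\sHomRs(-,I)\in\smodCs_2$. For the lower bound I would use the triangles $I\to L_n\to I\to I[1]$ obtained by rotating those of Remark \ref{D2} (the analogue of the exact sequence $0\to I\to I_n\to I\to 0$): they give $\sHomRs(-,I)\to\sHomRs(-,L_n)\to\sHomRs(-,I)$ and hence $2\,\ell(\sHomRs(-,I))\geq\ell(\sHomRs(-,L_n))$ modulo $\smodCs_0$, and since the right-hand side is unbounded in $n$, $\sHomRs(-,I)$ cannot be of finite length modulo $\smodCs_0$; thus $\KGdim\sHomRs(-,I)=2$.

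Finally, every $F\in\smodCs$ admits an epimorphism $\sHomRs(-,X)\twoheadrightarrow F$ with $X\in\CMRs$; writing $X$ as a sum of copies of $R$, $I$ and the $L_n$, the representable summands have Krull--Gabriel dimension at most $2$ by (1) and (2) (the free summand contributing $0$), so $F$ has finite length in $\smodCs/\smodCs_1$ and $\KGdim\smodCs\leq 2$. Combined with (2) this gives $\KGdim\smodCs=2$. I expect the one genuinely new obstacle to be the cone analysis in the Proposition \ref{B8} step: one must verify that the reduction of an arbitrary $\varphi$ over the rank-two endomorphism ring behaves like the scalar reduction of the $(A_\infty)$ case, and the decisive point is precisely the identification of $\sHomRs(I,I)$ as a discrete valuation ring together with the two model cones furnished by Remark \ref{D2}, which together make the reduction to a single $L_{n_1}$ possible.
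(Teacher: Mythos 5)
Your proposal is correct and follows essentially the same route as the paper: transport the Section \ref{B} arguments via Lemma \ref{D1} and Remark \ref{D2}, establish (1) from the cokernel functor of $I\xrightarrow{z/x}I$ and the AR triangles, establish (2) by the Lemma \ref{B7} reduction plus the cone/splitting analysis, and conclude by covering an arbitrary $F$ with a representable functor. Your one refinement --- observing that $\sHomRs(I,I)\cong k[\![y]\!]\oplus k[\![y]\!]\cdot(z/x)$ with $(z/x)^2=-y$ is the discrete valuation ring $k[\![z/x]\!]$, so that $\varphi$ reduces to a single power of the uniformizer with cone $L_{n_1}$ --- is a clean repackaging of Remark \ref{D2} that replaces the paper's two-case comparison of the exponents $n=\min n_i$ and $m=\min m_i$ by a single valuation, but it does not change the substance of the argument.
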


\begin{proof}
The arguments in Section \ref{B} are valid. 
We have the exact sequence: 
\begin{equation}\label{D3-1}
0\to L_{1} \xrightarrow{\left( \begin{smallmatrix}\frac{x^2}{z} \\ 1\end{smallmatrix}\right)} I \oplus R \xrightarrow{(\frac{z}{x} \ -x)} I \to 0. 
\end{equation}
We consider the functor induced by the sequence (\ref{D3-1}), that is, $\sHomRs (-, L_1) \to \sHomRs (-, I) \to \sHomRs (-, I) \to G_1 \to 0$. 
The functor $G_1$ satisfies the equation
$
\dim_k G_1 (L) =1
$ for each indecomposable $L \in \sCMRs$. 
Thus by using the same arguments in Proposition \ref{B4}, one has $G_1$ is a simple object in $\smodCs/\smodCs_0$. 
Moreover $\ell (\sHomRs (-, L_1 )) = 2$ (see Remark \ref{B5}). 
Since we have an AR-triangle $L_n \to L_{n+1} \oplus L_{n-1} \to L_n \to L_{n}[1]$ (\cite[6.2]{S87}), we have $\ell (\sHomRs (-, L_n )) = 2n$ in $\smodCs/\smodCs_0$, so that the assertion (1) holds. 
See also Proposition \ref{B6}.

According to \cite[Proposition 2.2. (2)]{AIT12}, there is a triangle $ I \to L_n \to I \to I[1]$ for any $n \geq 1$. 
Thus we have $2\ell (\sHomRs (-, I)) \geq \ell (\sHomRs (-, L_n ))$ in $\smodCs/\smodCs_0$. 
Hence $\sHomR (-, I)$ does not belong to $\smodCs_1$. 
Then we can also show $\KGdim \sHomRs (-, I) =2$ by using the arguments in Proposition \ref{B8}. 
Let $0 \to V \to \sHomRs (-, I) \to C \to 0$ be an admissible sequence in $\smodCs$. 
By virtue of Lemma \ref{B7}, we may assume that $C$ has the presentation $\sHomRs (-, I^{\oplus l}) \xrightarrow{\varphi} \sHomRs (-, I) \to C \to 0$. 
According to Remark \ref{D2}, $\varphi$ is of the form $\left( a_1 y^{n_1} + b_1 \dfrac{z}{x}y^{m_1}\  a_2 y^{n_2} + b_2 \dfrac{z}{x}y^{m_2}\  \cdots \ a_l y^{n_l} + b_l \dfrac{z}{x}y^{m_l} \right)$ where $a_i$, $b_i$ are units in $k[\![y]\!]$. 
Set $n= min \{ n_1, \cdots , n_l\}$ and $m= min \{ m_1, \cdots , m_l\}$. 
Suppose that $n \geq m$, the mapping cone $C(\varphi ) \cong M_m = L_{2m+1} \oplus I ^{\oplus l-1}$. 
Suppose that $n < m$, then $C(\varphi ) \cong N_n = L_{2n}\oplus I ^{\oplus l-1}$. 
Since $a_i y^{n_i} + b_i \dfrac{z}{x}y^{m_i}$ is a non zero divisor on $\sHomRs (I, I)$, $I ^{\oplus l-1}$ is split out as shown in Proposition \ref{B8}. 
Hence we also have $C$ is a subfunctor of $\sHomRs (-, L_{2m+1})$ or $\sHomRs (-, L_{2n})$, so that $C$ is  in $\smodCs_1$. 
Consequently, $\sHomRs (-, I)$ is simple in $\smodCs/\smodCs_1$, which means $\KGdim \sHomRs (-, I) = 2$. 
\end{proof}

%%%%%%%%%%%%%%%%%%%%%%%%%%%%% 
%%%%%%%%%%%%%%%%%%%%%%%%%%%%% 
%%%%%%%%%%%%%%%%%%%%%%%%%%%%% 
\section{ Kn\"orrer's periodicity}\label{C}

In this section we investigate how the Krull--Gabriel dimension behaves concerning Kn\"orrer's periodicity. 
We recall some observations given in \cite{RR85, Kn87}.

Let $\CC$ and $\DD$ be additive categories with a functor $\AA: \CC \to \DD$. 
Then $\AA$ induces the functor $\AA : \mod (\CC ) \to \mod (\DD )$ by $\AA ( \Hom _{\CC} (-, C)) = \Hom _{\DD} (-, \AA(C))$.

\begin{lemma}\label{C1}
Let $\CC$ and $\DD$ be additive categories with functors $\AA: \CC \to \DD$ and $\BB: \DD \to \CC$. 
Suppose that $(\BB, \AA)$ is an adjoint pair of functors. 
Then the induced functor $\AA : \mod(\CC ) \to \mod(\DD )$ is an exact functor. 
\end{lemma}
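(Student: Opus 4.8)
The plan is to exploit the functorial description of objects in $\mod(\CC)$ and the adjointness hypothesis to reduce exactness of $\AA$ to a statement about the original categories. Every object $F \in \mod(\CC)$ is finitely presented, so it fits into an exact sequence $\Hom_{\CC}(-,C_1) \to \Hom_{\CC}(-,C_0) \to F \to 0$ arising from a morphism $g\colon C_1 \to C_0$ in $\CC$. By definition $\AA$ sends the representable functor $\Hom_{\CC}(-,C)$ to $\Hom_{\DD}(-,\AA(C))$, so to understand how $\AA$ acts on a general $F$ I would first record that $\AA$ is right exact essentially by construction: it is defined on projective presentations and commutes with cokernels, so it preserves epimorphisms and cokernels of maps between representables. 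The genuine content is therefore \emph{left} exactness, i.e.\ that $\AA$ preserves kernels (equivalently, preserves monomorphisms and hence short exact sequences).

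First I would reduce the general exactness claim to the behavior of $\AA$ on morphisms between representable functors. Using finite presentations, any short exact sequence in $\mod(\CC)$ can be resolved by a commutative diagram of representable functors, and by the horseshoe/standard diagram-chase argument exactness of $\AA$ on all of $\mod(\CC)$ follows once we know $\AA$ carries exact sequences of the form $0 \to \Hom_{\CC}(-,C') \to \Hom_{\CC}(-,C) \to \Hom_{\CC}(-,C'') $ (with the appropriate cokernel) to exact sequences. The key translation step is the adjunction: since $(\BB,\AA)$ is an adjoint pair, for every $D \in \DD$ we have natural isomorphisms
$$
\Hom_{\DD}(D,\AA(C)) \cong \Hom_{\CC}(\BB(D), C),
$$
so as functors on $\DD$, $\AA(\Hom_{\CC}(-,C)) = \Hom_{\DD}(-,\AA(C))$ is naturally isomorphic to the composite $\Hom_{\CC}(\BB(-),C)$, i.e.\ to the restriction of $\Hom_{\CC}(-,C)$ along $\BB$. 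In other words, $\AA$ on the level of functor categories is (up to natural isomorphism) precomposition with $\BB$.

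With that identification in hand, the proof becomes transparent: precomposition with any functor $\BB\colon \DD \to \CC$ is an exact operation on functor categories, because exactness in $\mod(\CC)$ and $\mod(\DD)$ is computed objectwise (a sequence of functors is exact iff it is exact after evaluation at each object), and evaluating $\AA(F)$ at $D$ gives $F(\BB(D))$, which is just evaluation of $F$ at the object $\BB(D) \in \CC$. Thus a sequence $F' \to F \to F''$ that is exact in $\mod(\CC)$ — hence exact at every object of $\CC$, in particular at each $\BB(D)$ — yields a sequence $\AA(F') \to \AA(F) \to \AA(F'')$ that is exact at every $D \in \DD$, which is exactly exactness in $\mod(\DD)$.

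The main obstacle I anticipate is purely technical bookkeeping rather than a conceptual difficulty: one must check that the natural isomorphism $\AA(\Hom_{\CC}(-,C)) \cong \Hom_{\CC}(\BB(-),C)$ furnished by the adjunction is natural in $C$ as well as in the argument, so that it extends from representable functors to all finitely presented functors and is compatible with the cokernel construction used to define $\AA(F)$ for general $F$. I would verify this compatibility by fixing a presentation, applying the representable case term by term, and invoking the five lemma (or simply the fact that cokernels are preserved) to transport the isomorphism to $\AA(F)$. Once this naturality is confirmed, the objectwise criterion for exactness finishes the argument with no further computation.
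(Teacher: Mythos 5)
Your proposal is correct and follows essentially the same route as the paper: the paper's proof consists precisely of observing that the adjunction gives $\AA(F)(-) \cong F(\BB(-))$ for $F \in \mod(\CC)$, from which exactness follows because exactness of sequences of functors is checked objectwise. You simply spell out the naturality and presentation-compatibility details that the paper leaves implicit.
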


\begin{proof}
By the adjointness of $(\BB, \AA)$, one can show that $\AA (F)(-) \cong F (\BB (- ))$ for $F \in \smod (\CC )$. 
The assertion follows from the isomorphism. 
\end{proof}

Let $R$ be a hypersurface, that is, $R= S/(f)$ where $S = k[\![x_0, x_1, \cdots , x_n]\!]$ is a formal power series ring with a maximal ideal $\m _S = (x_0, x_1, \cdots , x_n )$ and $f \in \m _S$. 
For the ring $R$, we denote $\Rs = S[\![z]\!] /(f + z^2)$. 
Then the group $G=\Z /2\Z$ acts on $\Rs$ by $\sigma : z \to -z$. 
Denote the skew group ring by $\RsG$. 
We say that a finitely generated $\RsG$-module is MCM if it is MCM as an $\Rs$-module.  
For an $\Rs$-module $M$ and the involution $\sigma$ in $G$, we define an $\Rs$-module $\sigma^{\ast}  M$ by $M = \sigma ^{\ast} M$ as a set and $r \circ m = \sigma (r) m$. 
For the detail, refer to \cite[Section 2]{Kn87}.

\begin{remark}\label{C2}
For an $\Rs$-module $M$, $M \oplus \sigma \ast M$ has an $\RsG$-module structure. 
For $(a, b) \in M \oplus \sigma ^{\ast} M$, we define the action of $\sigma$ by $\sigma (a, b) = (b, a)$. 
Moreover, for an $\Rs$-homomorphism $f : M \to N$, we see that $\left( \begin{smallmatrix} f &0 \\ 0&f \end{smallmatrix}\right): M \oplus \sigma^{\ast}  M \to N \oplus \sigma^{\ast} N$ is an $\RsG$-homomorphism.   
\end{remark}

The following theorem is due to Kn\"orrer \cite{Kn87}. 

\begin{theorem}\label{C3}\cite{Kn87}
Let $R$, $\RsG$, $\Rs$ be as above. 
We have the functors
$$
\CMR \xrightarrow{\Omega}\CMRsG \stackrel[ad]{\mathcal{F}}{\rightleftarrows} \CMRs, 
$$
where the functor $\Omega(-)$ is defined by $\syz ^1 _{\Rs} (- )$, $\mathcal{F}$ is a forget-functor and $ad(-)=- \otimes_{\Rs} \RsG$ is its adjoint. 
Then, for $X \in \CMR$ and $Y \in \CMRs$, the following statements hold. 
\begin{enumerate}[\rm(1)]
\item The functor $\Omega$ gives the categorical equivalence. 
\item $\Omega^{-1} \circ ad \circ F \circ \Omega$ is equivalent to the functor $X \to X \oplus \syz _R ^1(X)$. 

\item $F \circ \Omega \circ \Omega^{-1} \circ ad$  is equivalent to the functor $Y \to  Y \oplus \sigma ^{\ast} Y$.

\end{enumerate}
\end{theorem}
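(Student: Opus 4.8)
The plan is to prove all three statements by passing to matrix factorizations and exploiting the $\Z/2\Z$-grading that the $G$-action induces. The starting observation is that $\Rs = S[\![z]\!]/(f+z^2)$ has $z$ as a nonzerodivisor with $\Rs/(z) \cong S/(f) = R$, so every MCM $R$-module $X$ is an $\Rs$-module annihilated by $z$, and $\Omega(X) := \syz ^1 _{\Rs}(X)$ is MCM over the hypersurface $\Rs$. Since $\sigma$ fixes the ideal $(z)$, the module $X$ is $G$-equivariant, and because $2$ is invertible in $k$ one may choose a $G$-equivariant free $\Rs$-resolution; hence $\Omega(X)$ carries a $G$-action and defines an object of $\CMRsG$. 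Concretely, if $X$ is presented by a matrix factorization $(\varphi,\psi)$ of $f$ over $S$, then $\Omega(X)$ is the cokernel of the Kn\"orrer factorization $\left( \begin{smallmatrix}\psi & z \\ z & -\varphi\end{smallmatrix}\right)$ of $f+z^2$, with $\sigma$ acting by $z \mapsto -z$.

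To establish (1) I would identify $\CMRsG$ with the category of $G$-equivariant, equivalently $\Z/2\Z$-graded, MCM $\Rs$-modules: writing $N = N_+ \oplus N_-$ for the $\sigma$-eigenspace decomposition, the element $z$ is anti-invariant, so it gives odd maps $N_+ \to N_-$ and $N_- \to N_+$ whose composite is $-f$. This is exactly the datum of a matrix factorization of $f$ over $S$, hence of an MCM $R$-module, and this assignment is the quasi-inverse of $\Omega$. I would then verify that the two functors are mutually inverse and that $\Omega$ is full and faithful, i.e.\ $\HomR(X,X') \cong \Hom_{\RsG}(\Omega X, \Omega X')$. This full-faithfulness and essential surjectivity, together with checking that $\syz ^1 _{\Rs}$ really realizes the graded construction up to the sign bookkeeping, is the substance of Kn\"orrer's periodicity and is the step I expect to be the main obstacle; everything else is comparatively formal.

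With (1) available, parts (2) and (3) reduce to the explicit shape of induction recorded in Remark \ref{C2}, namely that for an $\Rs$-module $M$ one has $M \otimes_{\Rs} \RsG \cong M \oplus \sigma^{\ast} M$ as $\Rs$-modules with $\sigma$ interchanging the summands (this is also what makes $\mathcal{F} \circ \Omega(-) \cong \Omega(-)$ as $\Rs$-modules tractable). For (3), since $\Omega \circ \Omega^{-1} \cong \mathrm{id}_{\CMRsG}$ by (1), the composite collapses to $\mathcal{F} \circ ad$, and Remark \ref{C2} gives $\mathcal{F}(ad(Y)) \cong Y \oplus \sigma^{\ast} Y$ at once. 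For (2), $\mathcal{F}\circ \Omega(X)$ is the MCM $\Rs$-module $\syz ^1 _{\Rs}(X)$, so $ad(\mathcal{F}(\Omega X)) \cong \Omega X \oplus \sigma^{\ast}\Omega X$, and it remains only to understand $\Omega^{-1}(\sigma^{\ast}\Omega X)$.

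The key computation here is that applying $\sigma$ sends $z \mapsto -z$, which after an evident change of basis interchanges the two maps $\varphi$ and $\psi$ of the underlying factorization of $f$; since $(\varphi,\psi)$ and $(\psi,\varphi)$ present $X$ and $\syz _R ^1(X)$ respectively, this yields the identity $\Omega^{-1}(\sigma^{\ast}\Omega X) \cong \syz _R ^1(X)$. Combining it with $\Omega^{-1}(\Omega X) \cong X$ gives $\Omega^{-1} \circ ad \circ \mathcal{F} \circ \Omega(X) \cong X \oplus \syz _R ^1(X)$, which is (2). I would close by confirming that each identification above is natural in $X$ (resp.\ $Y$), so that the equivalences hold at the level of functors and not merely objectwise.
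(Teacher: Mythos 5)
The paper gives no argument of its own for this theorem: its ``proof'' consists entirely of citations to Kn\"orrer's paper (Proposition 2.1 and Remark 2.2(ii) for (1); Proposition 2.4 and Lemma 2.5 for (2) and (3)). Your proposal reconstructs the content behind those citations, via the identification of $\CMRsG$ with $\Z/2\Z$-graded MCM $\Rs$-modules and hence with matrix factorizations of $f$ over $S$, the Kn\"orrer factorization $\left( \begin{smallmatrix}\psi & z \\ z & -\varphi\end{smallmatrix}\right)$ realizing $\syz^1_{\Rs}$, and the observation that $z\mapsto -z$ followed by a block swap interchanges $\varphi$ and $\psi$, giving $\sigma^{\ast}\Omega X \cong \Omega(\syz_R^1 X)$. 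This is exactly Kn\"orrer's route, so it is not a different method so much as a self-contained expansion of what the paper delegates to the literature; what it buys is that the reader sees why $\syz_R^1$ appears in (2) and $\sigma^{\ast}$ in (3), which the paper's citation-only proof hides. Three points in your sketch are thinner than they should be if the proof is meant to stand alone. First, the equivalence $\CMRsG \simeq \CMR$ --- in particular the full faithfulness of $\Omega$ and the compatibility of $\syz^1_{\Rs}$ with the graded construction --- is only described, and you acknowledge it as the main obstacle; as written, (1) is asserted rather than proved. Second, in (2) you need $ad(\mathcal{F}(N)) \cong N \oplus \sigma^{\ast}N$ as $\RsG$-modules for $N$ \emph{already equivariant}, with $\sigma^{\ast}N$ carrying the twisted equivariant structure; Remark \ref{C2} only equips $M \oplus \sigma^{\ast}M$ with an action for a bare $\Rs$-module $M$, so a small additional verification (Kn\"orrer's Lemma 2.5(ii)) is needed. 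Third, the isomorphism $\sigma^{\ast}\Omega X \cong \Omega(\syz_R^1 X)$ must be checked to be $G$-equivariant before $\Omega^{-1}$ may be applied --- the block swap does achieve this, but that check is the substance of Kn\"orrer's Proposition 2.4 and should be written out. None of these is a wrong turn; they are the standard details, and with them supplied your argument is a correct proof of the theorem the paper merely cites.
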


\begin{proof}
(1) \cite[Proposition 2.1, Remark 2.2(ii)]{Kn87}. (2), (3) \cite[Proposition 2.4, Lemma 2.5]{Kn87}. 
\end{proof}

\begin{lemma}\label{C4}
Let $\Omega$, $\mathcal{F}$ and $ad$ be as above. 
Set $\AA = \mathcal{F} \circ \Omega$ and $\mathcal{B} = \Omega^{-1} \circ ad$. 
Then $(\AA , \mathcal{B})$ and $(\mathcal{B} , \AA)$ are adjoint pairs. 
\end{lemma}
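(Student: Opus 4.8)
The plan is to avoid constructing anything by hand and instead to obtain both adjunctions by composing known adjoint pairs along the factorizations $\AA = \mathcal{F}\circ\Omega$ and $\mathcal{B} = \Omega^{-1}\circ ad$ through the intermediate category $\CMRsG$. Two ingredients are needed. First, by Theorem \ref{C3}(1) the functor $\Omega\colon\CMR\to\CMRsG$ is a categorical equivalence, so its quasi-inverse $\Omega^{-1}$ serves simultaneously as a left and a right adjoint of $\Omega$; that is, both $(\Omega,\Omega^{-1})$ and $(\Omega^{-1},\Omega)$ are adjoint pairs. Second, I would record that the restriction (forgetful) functor $\mathcal{F}\colon\CMRsG\to\CMRs$ and the induction functor $ad = -\otimes_{\Rs}\RsG\colon\CMRs\to\CMRsG$ form an adjoint pair in \emph{both} directions: $(ad,\mathcal{F})$ and $(\mathcal{F},ad)$ are each adjoint pairs.

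Granting these, the conclusion is formal, since adjoint pairs compose. For $(\AA,\mathcal{B})$ I would use that $\mathcal{F}$ is left adjoint to $ad$ and $\Omega$ is left adjoint to $\Omega^{-1}$, so that $\AA=\mathcal{F}\circ\Omega$ is left adjoint to $\Omega^{-1}\circ ad=\mathcal{B}$; concretely, for $X\in\CMR$ and $Y\in\CMRs$ one chains
\[
\Hom_{\Rs}(\AA(X),Y)=\Hom_{\Rs}(\mathcal{F}(\Omega(X)),Y)\cong\Hom_{\RsG}(\Omega(X),ad(Y))\cong\HomR(X,\Omega^{-1}(ad(Y)))=\HomR(X,\mathcal{B}(Y)).
\]
For $(\mathcal{B},\AA)$ I would argue symmetrically, using that $ad$ is left adjoint to $\mathcal{F}$ and $\Omega^{-1}$ is left adjoint to $\Omega$, so that $\mathcal{B}=\Omega^{-1}\circ ad$ is left adjoint to $\mathcal{F}\circ\Omega=\AA$; explicitly
\[
\HomR(\mathcal{B}(Y),X)\cong\Hom_{\RsG}(ad(Y),\Omega(X))\cong\Hom_{\Rs}(Y,\mathcal{F}(\Omega(X)))=\Hom_{\Rs}(Y,\AA(X)).
\]

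The only step carrying genuine content, rather than formal manipulation, is the two-sided adjunction $(\mathcal{F},ad)$, i.e. that induction is at once a left and a right adjoint of restriction. The pair $(ad,\mathcal{F})$ is just the usual tensor--hom adjunction for the ring extension $\Rs\hookrightarrow\RsG$ and is immediate; indeed the paper already builds $ad$ as the adjoint of $\mathcal{F}$. The reverse adjunction $(\mathcal{F},ad)$ is where I expect the main work: it holds because $G=\Z/2\Z$ is finite, so $\RsG$ is a free $\Rs$-module of rank $2$ and is a Frobenius extension of $\Rs$; for such an extension induction and coinduction are naturally isomorphic, whence induction is right adjoint to restriction as well. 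Finally I would check that everything stays inside the maximal Cohen--Macaulay subcategories, so that the adjunction isomorphisms descend from the ambient module categories to $\CMR$, $\CMRs$ and $\CMRsG$: the functor $ad(Y)=Y\otimes_{\Rs}\RsG$ is Cohen--Macaulay over $\Rs$ because $\RsG$ is $\Rs$-free, while $\mathcal{F}$ merely forgets the $G$-action and $\Omega^{\pm1}$ are equivalences of the Cohen--Macaulay categories.
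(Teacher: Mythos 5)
Your proposal is correct, and its overall architecture coincides with the paper's: both reduce the statement, via the equivalence $\Omega$, to showing that $(ad,\mathcal{F})$ and $(\mathcal{F},ad)$ are adjoint pairs between $\CMRsG$ and $\CMRs$, dismiss $(ad,\mathcal{F})$ as the standard tensor--hom adjunction, and locate all the content in the reverse adjunction $(\mathcal{F},ad)$. The difference lies in how that reverse adjunction is established. You invoke the general fact that for a finite group $G$ the skew group ring $\RsG$ is a free Frobenius extension of $\Rs$, so induction and coinduction are naturally isomorphic and induction is therefore also right adjoint to restriction. The paper instead works concretely: it uses Kn\"orrer's isomorphism $ad(Y)\cong Y\oplus\sigma^{\ast}Y$ and writes down the natural maps $\Phi(f)=f_1$ and $\Psi(g)=(g,g\circ\sigma)$ explicitly, checking via the $G$-equivariance relation $f_2=f_1\circ\sigma$ that they are mutually inverse (citing the analogous argument in Reiten--Riedtmann). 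Your route is more conceptual and would generalize immediately to any finite group acting on $\Rs$; the paper's is self-contained and exhibits the unit and counit in the form actually used later (e.g.\ in Remark \ref{C13}, where $\AA\circ\BB(G)\cong G\oplus\sigma^{\ast}G$ is read off from the same decomposition). Your closing check that $ad$ preserves maximal Cohen--Macaulayness because $\RsG$ is $\Rs$-free is a point the paper leaves implicit, and it is worth making.
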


\begin{proof}
Since $\Omega$ gives the equivalence it is enough to show that $(ad, \FF)$ and $(\FF , ad)$ are adjoint pairs. 
It is well-known that $(ad, \FF)$ is an adjoint pair, and we show $(\FF , ad)$ is an adjoint pair. 
Note that, for $Y \in \CMRs$,  $ad (Y) = Y \otimes _{\Rs} \Rs \cong Y \oplus \sigma^{\ast} Y$ as $\RsG$-module (\cite[Lemma 2.5.(i)]{Kn87}). 
The isomorphism gives the correspondence: 
$$
\Hom_{\RsG}(X, ad(Y))\cong \Hom_{\RsG}(X, Y \oplus \sigma^{\ast} Y) \qquad f \ \mapsto \ [x \mapsto (f_1(x), f_2(x))].
$$
Since $f(\sigma(x))= \sigma f(x)$, $(f_1(\sigma (x)), f_2(\sigma(x)))= \sigma (f_1(x), f_2(x)) = (f_2(x), f_1(x))$ (see Remark \ref{C2}). 
Thus $f_2 = f_1 \circ \sigma$. 
We determine the morphism $\Phi: \Hom_{\RsG} (X, Y \oplus \sigma ^{\ast} Y)\to \Hom_{\Rs}(\FF(X), Y)$ by $\phi (f) = f_1$. 
Conversely, by the observation above, we determine the morphism $\Psi: \Hom_{\Rs}(\FF(X), Y) \to \Hom_{\RsG} (X, Y \oplus \sigma ^{\ast} Y)$ by $\Psi (g) = (g, g \circ \sigma )$. 
By using the isomorphism $ad (Y)  \cong Y \oplus \sigma^{\ast} (Y)$, we see that $\Phi$ and $\Psi$ give natural isomorphisms between $\Hom_{\RsG} (X, ad(Y))$ and $\Hom_{\Rs}(\FF(X), Y)$. 
Compare with the proof of \cite[Theorem 3.2]{RR85}. 
\end{proof}

One can also show that $(\AA, \mathcal{B})$ and $(\mathcal{B}, \AA)$ are adjoint pairs between $\sCMR$ and $\sCMRs$ since the functors $\Omega$, $\FF$ and $ad$ preserve projectivity.   
Actually, $\Omega (R) = \syz _{\Rs} ^1 (R) \cong \Rs$ and $\Omega ^{-1} (\RsG) = (\RsG )^G / z (\RsG )^a \cong R$ where $(\RsG )^G$ (resp. $(\RsG )^a$) is the set of $\sigma$-invariant (resp. $\sigma$-antiinvariant) elements of $\RsG$(\cite[Proposition 2.1]{Kn87}). 
The indecomposable projective $\RsG$-modules are $\Rs$ and $\sigma ^{\ast} \Rs$ (\cite[Section 2]{Kn87}). 
This implies that $\FF$ sends projective $\RsG$-modules to projective (free) $\Rs$-modules since $\sigma ^{\ast} \Rs \cong \Rs$ as an $\Rs$-module. 
It also implies that the functor $ad$ sends projective (free) $\Rs$-modules to projective $\RsG$-modules.  
The fact induce that $(\AA, \mathcal{B})$ and $(\mathcal{B}, \AA)$ are also adjoint pairs between $\sCMR$ and $\sCMRs$. 
Hence we get the following consequence.

\begin{corollary}\label{C12}
Let $\AA$ and $\BB$ be as in Lemma \ref{C4}. 
Then $(\AA, \mathcal{B})$ and $(\mathcal{B}, \AA)$ are adjoint pairs between $\sCMR$ and $\sCMRs$. 
Thus the induced functors $\AA : \smodC \to \smodCs$ and $\BB : \smodCs \to \smodC$ are exact functors. 
\end{corollary}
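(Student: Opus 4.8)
The plan is to deduce Corollary \ref{C12} from Lemma \ref{C4} together with the accompanying remarks on projectivity, and then to invoke Lemma \ref{C1} twice. First I would verify that the adjoint pairs $(\AA,\BB)$ and $(\BB,\AA)$ established in Lemma \ref{C4} at the level of the categories $\CMR$, $\CMRsG$ and $\CMRs$ descend to the stable categories $\sCMR$ and $\sCMRs$. The key point is that a pair of adjoint functors between additive categories descends to the respective stable quotients precisely when both functors preserve the ideal of morphisms factoring through projectives, and this is guaranteed once the functors send projective objects to projective objects. Thus the main verification is the three displayed facts in the paragraph preceding the corollary: $\Omega(R)\cong\Rs$, $\Omega^{-1}(\RsG)\cong R$, and $\FF(\sigma^{\ast}\Rs)\cong\Rs$ together with the statement that $ad$ sends free $\Rs$-modules to projective $\RsG$-modules. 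These combine to show $\AA=\FF\circ\Omega$ carries free $R$-modules to free $\Rs$-modules and $\BB=\Omega^{-1}\circ ad$ carries free $\Rs$-modules to free $R$-modules, so both functors respect the stable structure and the adjunction isomorphisms pass to the stable Hom groups.

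Once the adjoint pairs are in place on the stable categories, the second assertion of the corollary follows immediately. By definition, the induced functor on functor categories acts by $\AA(\Hom_R(-,C))=\Hom_{\Rs}(-,\AA(C))$, and the induced functor $\AA:\smodC\to\smodCs$ is the restriction of this to functors vanishing on $R$; this restriction is well defined because $\AA$ sends free modules to free modules, so a functor killing $R$ is carried to one killing $\Rs$. Now Lemma \ref{C1}, applied to the adjoint pair $(\BB,\AA)$ on the stable categories, yields that $\AA:\smodC\to\smodCs$ is exact, and applied to the adjoint pair $(\AA,\BB)$ it yields that $\BB:\smodCs\to\smodC$ is exact. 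Both applications are legitimate because Lemma \ref{C1} only requires one of the two functors in the adjunction to be the one whose induced functor is asserted exact, and here each of $\AA$ and $\BB$ sits on the correct side of a suitable adjunction.

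The main obstacle I anticipate is not the exactness deduction, which is a formal consequence of Lemma \ref{C1}, but rather the bookkeeping needed to confirm that Lemma \ref{C4} legitimately transfers from the module categories to the stable categories. Concretely, one must check that the natural isomorphisms $\Phi$ and $\Psi$ constructed in the proof of Lemma \ref{C4} remain well defined and mutually inverse after quotienting out maps factoring through projectives, i.e.\ that $P(X,ad(Y))$ corresponds to $P(\FF(X),Y)$ under these maps. This is where the projectivity-preservation facts do the real work: a morphism $g:\FF(X)\to Y$ factors through a free $\Rs$-module if and only if its image $\Psi(g)=(g,g\circ\sigma)$ factors through a projective $\RsG$-module, using $\FF(\sigma^{\ast}\Rs)\cong\Rs$ and $ad$ of a free module being projective. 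I would record this compatibility explicitly, since it is exactly the content hidden in the phrase \emph{since the functors $\Omega$, $\FF$ and $ad$ preserve projectivity} in the paragraph after Lemma \ref{C4}, and it is the only nontrivial step before the formal conclusion.
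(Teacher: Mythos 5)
Your proposal follows essentially the same route as the paper: the adjunctions of Lemma \ref{C4} descend to the stable categories because $\Omega$, $\FF$ and $ad$ preserve projectives (so the adjunction isomorphisms are compatible with the ideals of maps factoring through projectives), and then Lemma \ref{C1} applied to $(\BB,\AA)$ and to $(\AA,\BB)$ gives exactness of the induced functors. Your explicit remark that $\Phi$ and $\Psi$ carry $P(X,ad(Y))$ to $P(\FF(X),Y)$ is exactly the content the paper compresses into the phrase about preserving projectivity, so the argument is correct and matches the paper's.
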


\begin{remark}\label{C13}
According to Theorem \ref{C3}, for each $F \in \smodC$ with $\sHomR (-, Y) \xrightarrow{\sHomR (-, f)} \sHomR (-, X) \to F \to 0$, one has $\BB \circ \AA (F) = F \oplus  F[-1]$ defined by $\sHomR (-, Y) \oplus \sHomR (-, Y[-1]) \xrightarrow{\left(\begin{smallmatrix}\sHomR (-, f) &0\\0&\sHomR (-, f[-1])\end{smallmatrix}\right)} \sHomR (-, X)\oplus \sHomR (-, X[-1]) \to F \oplus F[-1] \to 0$. 
Similarly one also has $\AA \circ \BB (G) = G \oplus \sigma^{\ast} G$ for each $G \in \smodCs$ (see also Remark \ref{C2}).  
\end{remark}

\begin{proposition}\label{C8}
Let $R= S/(f )$ be a hypersurface and $\AA$, $\BB$ as in Lemma \ref{C4}. 
Suppose that, for each $F \in \smodC _{n}$, $\AA (F) \in \smodCs_{n}$. 
Then, for a simple object $S \in \smodC /\smodC _{n}$, $\AA (S)$ has finite length in $\smodCs /\smodCs _{n}$. 
\end{proposition}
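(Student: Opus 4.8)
The plan is to show that $\AA$ descends to an exact functor $\bar{\AA}\colon \smodC/\smodC_n \to \smodCs/\smodCs_n$ which sends the simple object $S$ to an object of length at most $2$, by transporting a hypothetical filtration of $\AA(S)$ back through $\BB$ into the length-$2$ object $S \oplus S[-1]$.

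First I would record the formal setup. By Corollary \ref{C12} both $\AA$ and $\BB$ are exact, and the standing hypothesis $\AA(\smodC_n) \subseteq \smodCs_n$ guarantees that $\AA$ induces an exact functor $\bar{\AA}$ on the quotient categories. Since $R$ is a hypersurface, hence Gorenstein, Lemma \ref{A7} applies: as $S$ is simple in $\smodC/\smodC_n$, so is $S[-1]$, and therefore $S \oplus S[-1]$ has length exactly $2$ in $\smodC/\smodC_n$. By Remark \ref{C13} this object is precisely $\BB\AA(S)$, which is what makes it the natural target for a counting argument.

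The key point I would isolate next is a one-directional reflection property: if $H \in \smodCs$ satisfies $H \notin \smodCs_n$, then $\BB(H) \notin \smodC_n$. I would prove the contrapositive. If $\BB(H) \in \smodC_n$, then the hypothesis gives $\AA\BB(H) \in \smodCs_n$; but by Remark \ref{C13} one has $\AA\BB(H) \cong H \oplus \sigma^{\ast}H$, and a Serre subcategory is closed under direct summands, so $H \in \smodCs_n$. The value of this step is that it avoids proving that $\BB$ itself descends to the quotients---only this single implication is needed, and it comes essentially for free from the hypothesis together with the isomorphism $\AA\BB \cong \mathrm{Id}\oplus \sigma^{\ast}$.

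Finally I would run the length comparison. Given any chain $0 = \bar{G}_0 \subsetneq \bar{G}_1 \subsetneq \cdots \subsetneq \bar{G}_m = \bar{\AA}(S)$ in $\smodCs/\smodCs_n$ with nonzero successive quotients, I would use Lemma \ref{A2} repeatedly to lift it to a chain $0 = \tilde{G}_0 \subseteq \tilde{G}_1 \subseteq \cdots \subseteq \tilde{G}_m = \AA(S)$ of genuine subobjects in $\smodCs$, each quotient $\tilde{G}_i/\tilde{G}_{i-1}$ lying outside $\smodCs_n$. Applying the exact functor $\BB$ yields a chain in $\smodC$ ending at $\BB\AA(S) = S \oplus S[-1]$, and pushing it into $\smodC/\smodC_n$ gives a chain whose successive quotients are the images of the $\BB(\tilde{G}_i/\tilde{G}_{i-1})$; by the reflection property each of these is nonzero. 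Hence $S \oplus S[-1]$ acquires a filtration with $m$ nonzero quotients in $\smodC/\smodC_n$, forcing $m \le 2$. Thus every such chain has length at most $2$, so $\AA(S)$ has finite length in $\smodCs/\smodCs_n$. The main obstacle is the bookkeeping in this lifting step: making sure that Lemma \ref{A2}, stated for a single short exact sequence, can be iterated to lift the entire finite chain while keeping each successive quotient genuinely outside $\smodCs_n$, so that the counting against the length-$2$ object $S \oplus S[-1]$ is valid.
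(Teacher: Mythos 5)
Your proof is correct, and it rests on exactly the same ingredients as the paper's: the identity $\BB\AA(S)\cong S\oplus S[-1]$ from Remark \ref{C13}, the fact that $S[-1]$ is simple (Lemma \ref{A7}) so that $\BB\AA(S)$ has length $2$, the exactness of $\BB$ from Corollary \ref{C12}, and the reflection property ``$\BB(H)\in\smodC_n \Rightarrow H\in\smodCs_n$'' extracted from the hypothesis together with $\AA\BB\cong \mathrm{Id}\oplus\sigma^{\ast}$. Where you differ is in how the counting against the length-$2$ object is executed: the paper peels off a simple quotient $S'$ of $\AA(S)$ in $\smodCs/\smodCs_n$, applies $\BB$ to the resulting short exact sequence to force $\ell(\BB(V))=\ell(\BB(S'))=1$, and then shows the kernel $V$ is itself simple; you instead bound the length of an arbitrary strict chain ending at $\AA(S)$ by lifting it through Lemma \ref{A2} and pushing it down through $\BB$. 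Your version buys a small amount of robustness: the paper's first step tacitly assumes that the non-simple object $\AA(S)$ admits a simple quotient in the quotient category, whereas your chain-bounding argument needs no such existence statement and yields the finite-length conclusion (indeed $\ell(\AA(S))\le 2$) directly. The iterated use of Lemma \ref{A2} that you flag as the main obstacle does go through: lifting the chain from the top down, each $\tilde{G}_{i-1}$ is realized as a genuine subobject of $\tilde{G}_i$ with quotient isomorphic in $\smodCs/\smodCs_n$ to the original one, and Lemma \ref{A8} (with $\SS'=\SS=\smodCs_n$) guarantees these quotients stay outside $\smodCs_n$.
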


\begin{proof}
Let $S$ be a simple object in $\smodC /\smodC_{n}$. 
If $\AA (S)$ is simple, we have nothing to prove. 
Thus we may assume that $\AA (S)$ is not simple. 
Then we have an exact sequence of functors $0 \to V \to \AA(S) \to S' \to 0$ such that $S'$ is simple  in $\smodCs/\smodCs_{n}$. 
Apply $\BB$ to the sequence, one has 
$$
0 \to \BB (V) \to \BB \circ \AA(S) \to \BB (S') \to 0. 
$$
Since $\BB \circ \AA(S) \cong S \oplus S[-1]$, $\ell (\BB (V)) + \ell (\BB (S')) = 2$ in $\smodC /\smodC_{n}$. 
Notice that the object $S[-1]$ is also a simple object (Lemma \ref{A7}).  
Suppose that $\ell (\BB (V)) = 0$, namely $\BB (V)$ belongs to $\smodC_{n}$. 
By the assumption $\AA \circ \BB (V) \cong V \oplus \sigma^{\ast} V$ is in $\smodCs_{n}$ and so is $V$. 
Hence $\AA (S) \cong S'$ in $ \smodC /\smodC _{n}$. 
This is a contradiction since $\AA (S)$ is not simple. 
Suppose that $\ell (\BB (S')) = 0$. 
Similarly one shows that $S'$ is in $\smodCs_{n}$, which is a contradiction. 
Consequently $\ell (\BB (V)) = \ell (\BB (S')) = 1$. 
Namely $\BB (V)$ and $\BB (S')$ are simple in $\smodC /(\smodC )_{n}$.

Now we shall show $V$ is also simple in $\smodCs /\smodCs_{n}$. 
Let $0 \to V' \to V \to C \to 0$ be an admissible sequence in $\smodCs$. 
Then we obtain the sequence $0 \to \BB (V') \to \BB (V) \to \BB (C) \to 0$ in $\smodC$.  
Since $\BB (V)$ is simple, $\BB (V' )$ or $\BB (C)$ is in $\smodC _{n}$. 
Assume that $\BB (V' )$ is in $\smodC _{n}$. 
By the assumption, $\AA \circ \BB (V')$ is in $\smodCs _{n}$, and so is $V'$. 
This implies that $V$ is simple in $\smodCs/\smodCs_{n}$. 
By the same arguments, the case that $\BB (C )$ is in $\smodC_{n}$ also implies that $V$ is simple.

Consequently $\AA (S)$ is of length 2 in $\smodCs/\smodCs_{n}$, so that the assertion holds. 
\end{proof}

\begin{proposition}\label{C9}
Suppose that $\AA (S)$ belongs to $\smodCs _{n}$ for each simple object $S$ in $\smodC /(\smodC)_{n-1}$. 
Then $\AA (F)$ is in $\smodCs_{n}$ for each $F$ in $\smodC_{n}$. 
\end{proposition}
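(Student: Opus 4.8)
The plan is to realize the claim as a descent statement: I will push $\AA$ down to the quotient categories and show that the resulting functor kills the finite-length objects. Write $\pi\colon\smodCs\to\smodCs/\smodCs_n$ and $q\colon\smodC\to\smodC/\smodC_{n-1}$ for the Serre quotient functors; both are exact, and $\AA$ is exact by Corollary \ref{C12}, so $\pi\circ\AA$ is exact.

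The key step is to check that $\pi\circ\AA$ annihilates $\smodC_{n-1}$, equivalently that $\AA(\smodC_{n-1})\subseteq\smodCs_n$. I would prove this by induction on $n$: when $n=0$ it is vacuous since $\smodC_{-1}=0$, and for $n\ge1$ it follows from the stronger inclusion $\AA(\smodC_{n-1})\subseteq\smodCs_{n-1}\subseteq\smodCs_n$, which is the conclusion of the present proposition at level $n-1$. (The hypothesis required to apply it there, namely that $\AA$ carries the simple objects of $\smodC/\smodC_{n-2}$ into $\smodCs_{n-1}$, is exactly what Proposition \ref{C8} produces from $\AA(\smodC_{n-2})\subseteq\smodCs_{n-2}$. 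So the whole argument is really a single induction on $n$ coupling Propositions \ref{C8} and \ref{C9}, with the present statement supplying the hypothesis that $\AA$ carries the simples of $\smodC/\smodC_{n-1}$ into $\smodCs_n$.)

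Once $\AA(\smodC_{n-1})\subseteq\smodCs_n$ is known, the universal property of the Serre quotient $q$ gives a unique exact functor $\overline{\AA}\colon\smodC/\smodC_{n-1}\to\smodCs/\smodCs_n$ with $\pi\circ\AA=\overline{\AA}\circ q$. The standing hypothesis says precisely that $\overline{\AA}$ sends every simple object of $\smodC/\smodC_{n-1}$ to zero.

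To finish, let $F\in\smodC_n$. By Definition \ref{A1} its image $q(F)$ has finite length in $\smodC/\smodC_{n-1}$; filtering $q(F)$ by a composition series and applying the exact functor $\overline{\AA}$, which annihilates every factor, I obtain $\overline{\AA}(q(F))=0$ by induction on the length (here I use that $\smodCs_n$, being a Serre subcategory, is closed under extensions). Hence $\pi(\AA(F))=0$, that is $\AA(F)\in\smodCs_n$, as required. I expect the descent step $\AA(\smodC_{n-1})\subseteq\smodCs_n$ to be the only genuine obstacle; the remainder is formal, resting on the exactness of $\AA$ and the fact that each $\smodCs_n$ is a Serre subcategory.
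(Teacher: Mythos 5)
Your proposal is correct and follows essentially the same route as the paper: lift a composition series of $F$ over $\smodC/\smodC_{n-1}$, apply the exact functor $\AA$ termwise, and conclude because $\smodCs_n$ is a Serre subcategory closed under extensions. The only difference is that you pass explicitly to the induced functor on the quotient categories and verify the descent condition $\AA(\smodC_{n-1})\subseteq\smodCs_n$ via the joint induction with Proposition \ref{C8} --- a point the paper's proof leaves implicit (its filtration tacitly assumes the bottom layer lying in $\smodC_{n-1}$ causes no trouble), and which is exactly how the two propositions are combined in the proof of Theorem \ref{C10}; this is a careful refinement rather than a genuinely different argument.
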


\begin{proof}
We have the filtration of $F$ in $\smodC$
$$
0=F_0 \subset F_1 \subset F_2 \subset \cdots \subset F_n =F
$$
such that $F_i/F_{i-1}$ are simple in $\smodC /\smodC_{n-1}$. 
Apply $\AA$ to the filtration, we obtain the filtration
$$
0=\AA ( F_0 ) \subset \AA ( F_1 )\subset \AA ( F_2 ) \subset \cdots \subset \AA ( F_n) =\AA ( F)
$$
in $\smodCs$. 
By the assumption, $\AA (F_i )/ \AA (F_{i-1})$ is in $\smodCs _{n}$. 
Hence $\AA (F)$ belongs to $\smodCs_{n}$. 
\end{proof}

\begin{theorem}\label{C10}
Let $R= S/(f )$ and $\Rs = S[\![z]\!] /(f + z^2)$. 
Then $\KGdim \smodC =\KGdim \smodCs$. 
\end{theorem}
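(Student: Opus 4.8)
The plan is to establish, for every $n$, the two filtration-preservation statements $\AA(\smodC _n)\subseteq \smodCs _n$ and $\BB(\smodCs _n)\subseteq \smodC _n$, and then to read off the equality of dimensions from the isomorphisms $\BB\circ\AA(F)\cong F\oplus F[-1]$ and $\AA\circ\BB(G)\cong G\oplus \sigma ^{\ast}G$ recorded in Remark \ref{C13}.

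First I would prove by induction on $n$ that $\AA(\smodC _n)\subseteq \smodCs _n$. The case $n=-1$ is trivial, since an exact functor preserves the zero object. For the inductive step, assuming $\AA(\smodC _n)\subseteq \smodCs _n$, Proposition \ref{C8} tells us that every simple object $S$ of $\smodC /\smodC _n$ is sent to an object $\AA(S)$ of finite length in $\smodCs /\smodCs _n$, that is, $\AA(S)\in \smodCs _{n+1}$; then Proposition \ref{C9}, applied with parameter $n+1$, upgrades this to $\AA(\smodC _{n+1})\subseteq \smodCs _{n+1}$. The point that makes the induction self-contained is that the hypotheses of Propositions \ref{C8} and \ref{C9} refer only to the behaviour of $\AA$ on the previous layer of the filtration, so no separate information about $\BB$ is needed at this stage.

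Next I would prove the symmetric statement $\BB(\smodCs _n)\subseteq \smodC _n$. The whole construction is symmetric under interchanging $(R,\AA,(-)[-1])$ with $(\Rs ,\BB ,\sigma ^{\ast})$: by Corollary \ref{C12} the pair $(\BB ,\AA)$ is again adjoint, and the isomorphism $\AA\circ\BB(G)\cong G\oplus \sigma ^{\ast}G$ mirrors $\BB\circ\AA(F)\cong F\oplus F[-1]$. Hence the proofs of Propositions \ref{C8} and \ref{C9} should apply verbatim to $\BB$, provided $\sigma ^{\ast}$ plays the role that $(-)[-1]$ plays in Lemma \ref{A7}. So I would first record that $\sigma ^{\ast}$ is an exact auto-equivalence of $\smodCs$: it is induced by restriction of scalars along the order-two automorphism $\sigma$, it preserves both MCM and free modules and therefore descends to $\sCMRs$ and to $\smodCs$, and, being an exact auto-equivalence, it automatically preserves the intrinsic filtration $\{\smodCs _n\}$ and carries a simple object of $\smodCs /\smodCs _{n-1}$ to a simple object. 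This is precisely the $\BB$-side analogue of Lemma \ref{A7}, and with it the induction of the previous paragraph runs identically to give $\BB(\smodCs _n)\subseteq \smodC _n$.

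Finally I would deduce the equality. Suppose $\KGdim \smodC =d<\infty$, so that $\smodC =\smodC _d$. For any $G\in \smodCs$ we then have $\BB(G)\in \smodC _d$, whence $\AA\circ\BB(G)\cong G\oplus \sigma ^{\ast}G$ lies in $\smodCs _d$; since $\smodCs _d$ is a Serre subcategory it is closed under subobjects, so $G\in \smodCs _d$. Thus $\smodCs =\smodCs _d$ and $\KGdim \smodCs \leq \KGdim \smodC$, an inequality that holds trivially when $\KGdim \smodC =\infty$. The symmetric argument, using $\BB(\smodCs _n)\subseteq \smodC _n$ together with $\BB\circ\AA(F)\cong F\oplus F[-1]$, gives $\KGdim \smodC \leq \KGdim \smodCs$, and the two inequalities force $\KGdim \smodC =\KGdim \smodCs$. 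I expect the only genuine obstacle to lie in the third paragraph: justifying the symmetry for $\BB$, namely verifying the $\sigma ^{\ast}$-version of Lemma \ref{A7} and confirming that the proofs of Propositions \ref{C8} and \ref{C9} use nothing about $(-)[-1]$ beyond its being an exact auto-equivalence that sends simples to simples in the successive quotients.
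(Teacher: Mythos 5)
Your proposal is correct, and its first half — the induction on $n$ giving $\AA(\smodC _n)\subseteq \smodCs _n$ via Propositions \ref{C8} and \ref{C9}, followed by reading off $\KGdim \smodCs \leq \KGdim \smodC$ from $\AA\circ \BB (G)\cong G\oplus \sigma ^{\ast}G$ and closedness of $\smodCs _d$ under subobjects — is exactly the paper's argument. Where you genuinely diverge is the reverse inequality: the paper never proves $\BB (\smodCs _n)\subseteq \smodC _n$; instead it passes to $\Rss =S[\![z_1,z_2]\!]/(f+z_1^2+z_2^2)$, invokes Kn\"orrer periodicity $\sCM (R)\simeq \sCM (\Rss )$ to identify $\smodC$ with $\smodCss$, and applies the already-established inequality to the pair $(\Rs ,\Rss )$ to get $\KGdim \smodC =\KGdim \smodCss \leq \KGdim \smodCs$. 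You instead symmetrize Propositions \ref{C8} and \ref{C9} directly, with $\sigma ^{\ast}$ in the role of $(-)[-1]$, and you correctly isolate the one new input this requires: the $\sigma ^{\ast}$-analogue of Lemma \ref{A7}, which indeed follows because $\sigma ^{\ast}$ fixes $\Rs$ up to isomorphism, hence descends to an exact auto-equivalence of $\sCMRs$ and of $\smodCs$, and any such auto-equivalence preserves the intrinsic filtration and simplicity in the successive quotients. Checking the details, the proofs of Propositions \ref{C8} and \ref{C9} do dualize verbatim under the swap $(\AA ,[-1])\leftrightarrow (\BB ,\sigma ^{\ast})$: every appeal to the hypothesis $\AA (\smodC _n)\subseteq \smodCs _n$ becomes an appeal to $\BB (\smodCs _n)\subseteq \smodC _n$, which your induction supplies. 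Your route buys independence from the two-variable periodicity equivalence (only the one-step adjunction of Corollary \ref{C12} is used), and your base case at $n=-1$ even recovers, rather than quotes, Kn\"orrer's fact that simples go to finite-length objects; the paper's route is shorter on the page because it reuses the forward inequality as a black box at the cost of citing the deeper equivalence $\sCM (R)\simeq \sCM (\Rss )$.
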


\begin{proof}
First we notice that, as mentioned in \cite[Corollary 2.10]{Kn87}, a simple object $S$ in $\smodC$ goes to a length-finite object in $\smodCs$, that is $\AA (S) \in \smodCs  _0$. 
Conversely a simple object $S'$ in $\smodCs$ also goes to an object in $\smodC _0$, that is $\BB (S') \in \smodC _0$.  
Summing up Proposition \ref{C8} and \ref{C9}, one can see that $\AA$ gives a functor from $\smodC _n$ to $\smodCs _n$. 
Suppose that $\KGdim \smodC =n$. 
For each object $F \in \smodCs$, $\BB (F)$ belongs to $\smodC _n$. 
Since $\AA \circ \BB (F) = F \oplus \sigma^{\ast}F$ belongs to $\smodCs _n$, $F$ is contained in $\smodCs _n$, so that $\KGdim F \leq n$. 
Therefore $\KGdim \smodCs \leq \KGdim \smodC$. 
Let $R^{\sharp \sharp} = S[\![z_1, z_2]\!] /(f + z_{1} ^2 + z_{2} ^2)$. 
Then we have an equivalence of triangulated categories $\sCM (R) \cong \sCM (R^{\sharp \sharp})$ which is known as Kn\"orrer's periodicity (cf. \cite[(12.10)]{Y}). 
Hence we also have the equivalence $\smodC \cong \smod (\CM(R^{\sharp \sharp}) )$. 
Apply the above arguments to $\smodCs$ and $\smodCss$, one has $\KGdim \smodC =\KGdim \smodCss \leq \KGdim \smodCs$. 
Consequently $\KGdim \smodC =\KGdim \smodCs$. 

Suppose that $\KGdim \smodC =\infty$. 
The inequality $\KGdim \smodC \leq \KGdim \smodCs$ holds if $\KGdim \smodCs$ is finite. 
Thus $\KGdim \smodCs$ must be infinite. 
\end{proof}

Finally we reach the main theorem of the paper.

\begin{corollary}\label{C11}
Let $k$ be an algebraically closed uncountable field of characteristic not two.
Let $R$ be a hypersurface of countable but not finite CM representation type. 
Then $\KGdim \smodC =2$. 
\end{corollary}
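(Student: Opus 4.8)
The plan is to combine three ingredients already available: the classification of hypersurfaces of countable but not finite CM representation type recalled in Section~\ref{sec1}, the two base computations of Theorems~\ref{B1} and~\ref{D3}, and the invariance of the Krull--Gabriel dimension under adjoining a square variable (Theorem~\ref{C10}). By that classification, $R \cong k[\![x_0,\ldots,x_n]\!]/(f)$ with $f$ of type $(A_\infty)$ or $(D_\infty)$, so it is enough to treat each family separately and, within a family, to reduce an arbitrary member to its lowest-dimensional representative by repeatedly stripping off pure square variables via Theorem~\ref{C10}.

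For the $(A_\infty)$ family, write $S=k[\![x_0,\ldots,x_n]\!]$ and $f=x_0^2+x_2^2+\cdots+x_n^2$. For $n\geq 2$ I would set $S'=k[\![x_0,\ldots,x_{n-1}]\!]$ and $f'=x_0^2+x_2^2+\cdots+x_{n-1}^2$, so that $R\cong S'[\![x_n]\!]/(f'+x_n^2)$ is exactly the ring attached to $S'/(f')$ in Theorem~\ref{C10} (with $x_n$ playing the role of $z$). Hence $\KGdim \smodC = \KGdim \smod(\CM(S'/(f')))$, and $S'/(f')$ is the next member down in the family. Iterating this descent terminates at $n=1$, i.e.\ at $k[\![x_0,x_1]\!]/(x_0^2)$, whose functor category has Krull--Gabriel dimension $2$ by Theorem~\ref{B1}. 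Since the defining polynomial $f'$ is nonzero at every step (the descent stops at $f=x_0^2$ and never reaches the regular ring), each invocation of Theorem~\ref{C10} is legitimate and we obtain $\KGdim \smodC = 2$.

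For the $(D_\infty)$ family, $f=x_0^2x_1+x_2^2+\cdots+x_n^2$. Stripping $x_n^2,\ldots,x_2^2$ by the same mechanism (there is nothing to strip when $n=1$) reduces to the $1$-dimensional member $k[\![x_0,x_1]\!]/(x_0^2x_1)$. This $1$-dimensional ring is not the hypersurface computed in Theorem~\ref{D3}, but its Kn\"orrer partner $k[\![x_0,x_1,z]\!]/(x_0^2x_1+z^2)$ is precisely that $2$-dimensional $(D_\infty)$ hypersurface; applying Theorem~\ref{C10} once more and then Theorem~\ref{D3} yields $\KGdim \smodC = 2$ for every $n\geq 1$.

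Thus the corollary is essentially a bookkeeping consequence once Section~\ref{C} is in place: all the genuine content sits in the base cases (Theorems~\ref{B1} and~\ref{D3}) and in Theorem~\ref{C10}. The only point demanding care---and the one I would flag as the true obstacle to applying the machinery naively---is that Theorem~\ref{C10} relates $S/(f)$ to the \emph{honest} hypersurface $S[\![z]\!]/(f+z^2)$ and must never be applied to the regular ring that would arise from stripping away the last square term. This is exactly why the $(A_\infty)$ descent halts at $k[\![x_0,x_1]\!]/(x_0^2)$ and the $(D_\infty)$ descent at the $1$-dimensional member (re-ascending one step to dimension $2$) rather than continuing to $f=0$.
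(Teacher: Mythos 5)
Your proposal is correct and follows essentially the same route as the paper: reduce to the $(A_\infty)$ and $(D_\infty)$ normal forms, transfer the Krull--Gabriel dimension along Kn\"orrer's periodicity via Theorem~\ref{C10}, and invoke the base computations of Theorems~\ref{B1} and~\ref{D3}. The paper states this in three sentences; your version merely spells out the variable-stripping bookkeeping (including the correct observation that the descent must stop before $f$ becomes $0$), which is a faithful elaboration rather than a different argument.
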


\begin{proof}
We may assume that $R$ is a hypersurface of type $(A_\infty)$ or $(D_\infty)$. 
Thanks to Theorem \ref{C10}, we have known that the Krull--Gabriel dimension is stable under Kn\"orrer's periodicity. 
Therefore the assertion holds from Theorem \ref{B1} and \ref{D3}. 
\end{proof}

\section*{Acknowledgments}
The author express his deepest gratitude to Ryo Takahashi and Yuji Yoshino for valuable discussions and helpful comments. 

%%%%%%%%%%%%%%%%%%%%%%%%%%%%% 
\ifx\undefined\bysame 
\newcommand{\bysame}{\leavevmode\hbox to3em{\hrulefill}\,} 
\fi


\begin{thebibliography}{1} 

\bibitem{AIT12}
{\sc Araya, T}, {\sc Iima, K.-i.}, {\sc Takahashi, T.}: 
{\it On the structure of Cohen--Macaulay modules over hypersurfaces of countable Cohen--Macaulay representation type}. 
J. Algebra. {\bf 361}, 213--224 (2012).



\bibitem{A80}
{\sc Auslander, M.}:
{\it A functorial approach to representation theory}, 
Representations of algebras (ed. M. Auslander, E. Luis). 
Springer Lecture Notes in Math. {\bf 944}, 105--179 (1980). 


\bibitem{BH}
{\sc Bruns, W.}, {\sc Herzog, J.}: 
{\it Cohen--Macaulay Rings}. 
Cambridge Studies in Advanced Mathematics, 39. 
Cambridge University Press, Cambridge, 1993. xii+403 pp. 
Revised edition, 1998.


\bibitem{BGS87}
{\sc Buchweitz, R.-O.}, {\sc Greuel, G.-M.}, {\sc Schreyer, F.-O.}: 
{\it Cohen--Macaulay modules on hypersurface singularities. II}. 
Invent. Math. {\bf 88}, no. 1, 165--182 (1987).  


\bibitem{BD08}
{\sc Burban, I.}, {\sc Drozd, Y.}: 
{\it Maximal Cohen--Macaulay modules over surface singularities}. 
Trends in representation theory of algebras and related topics, EMS Ser. Congr. Rep., Eur. Math. Soc., Zurich, 101--166 (2008). 


\bibitem{Ga62}
{\sc Gabriel, P.}: 
{\it Des cat\'{a}gories ab\'{a}liennes}. 
Bull. Soc. math. France {\bf 90}, 323--448 (1962). 


\bibitem{G85}
{\sc Geigle, W.}: 
{\it The Krull-Gabriel dimension of the representation theory of a tame hereditary Artin algebra and applications to the structure of exact sequences}.  
Manuscripta Math {\bf 54}, 83--106 (1985). 


\bibitem{GR74}
{\sc Gordon, R.}, {\sc Robson, J.C.}: 
{\it The Gabriel dimension of a module}. 
Journal of Algebra, Volume {\bf 29}, Issue 3, 459--473 (1974). 


\bibitem{Ha88}
{\sc Happel, D.}: 
{\it Triangulated categories in the representation theory of finite-dimensional algebras}. 
London Mathematical Society Lecture Note Series {\bf 119}. 
Cambridge University Press, Cambridge, 1988. x+208 pp. 


\bibitem{H17}
{\sc Hiramatsu, N.}: 
{\it Relations for Grothendieck groups of Gorenstein rings}. 
Proc. Amer. Math. Soc. {\bf 145}, 559--562 (2017). 


\bibitem{H18}
{\sc Hiramatsu, N.}: 
{\it On the stable hom relation and stable degenerations of Cohen--Macaulay modules}.  
Journal of Pure and Applied Algebra, Volume {\bf 222}, Issue 9, 2609--2625 (2018).



\bibitem{Kn87}
{\sc Kn\"orrer, H.}: 
{\it Cohen--Macaulay modules on hypersurface singularities I}. 
Invent Math {\bf 88}, 153--164 (1987). 


\bibitem{Kr98}
{\sc Krause, H.}: 
{\it Generic modules over Artin algebras}.
Proc. Lond. Math. Soc. (3), {\bf 76}, 276--306 (1998). 



\bibitem{LW12}
{\sc Leuschke, G. J.}, {\sc Wiegand, R.}:  
{\it Cohen--Macaulay representations}. 
Mathematical Surveys and Monographs, 181. 
American Mathematical Society, Providence, RI, 2012. xviii+367 pp.


\bibitem{LP19}
{\sc Los, I. }, {\sc Puninski, G.}: 
{\it The Ziegler spectrum of the D-infinity plane singularity}. 
Colloquium Mathematicum {\bf 157}, 35--63 (2019). 


\bibitem{MT17}
{\sc Matsui, H.}, {\sc Takahashi, R.}: 
{\it Singularity categories and singular equivalences for resolving subcategories}. 
Math. Z. {\bf 285}, no. 1-2, 251--286  (2017).


\bibitem{Ma}
{\sc Matsumura, H}.: 
{\it Commutative ring theory}. 
Cambridge Studies in Advanced Mathematics {\bf 8}. 
Cambridge University Press, Cambridge, 1989. xiv+320 pp.


\bibitem{P18}
{\sc Puninski, G.}: 
{\it The Ziegler Spectrum and Ringel's Quilt of the A-infinity Plane Curve Singularity}. 
Algebr Represent Theor {\bf 21}, 419--446 (2018).


\bibitem{RR85}
{\sc Reiten, I.}, {\sc Riedtmann, C.}: 
{\it Skew group algebras in the representation theory of Artin algebras}. 
J. Algebra {\bf 92}, 224--282 (1985). 


\bibitem{S87}
{\sc Schreyer, F-O.}: 
{\it Finite and countable CM-representation type. Singularities, representation of algebras, and vector bundles}. 
Lecture Notes in Math., {\bf1273}, Springer, Berlin, 9--34 (1987). 


\bibitem{S00}
{\sc Schr\"oer, J.}: 
{\it On the Krull-Gabriel dimension of an algebra}. 
Math Z {\bf 233}, 287--303 (2000). 


\bibitem{T08}
{\sc Takahashi, R.}: 
{\it Classifying subcategories of modules over a commutative noetherian ring}. 
J. Lond. Math. Soc. (2) {\bf 78}, no. 3, 767--782 (2008).


 \bibitem{Y}
{\sc Yoshino, Y.}: 
{\it Cohen--Macaulay Modules over Cohen--Macaulay Rings, London Mathematical Society}. 
Lecture Note Series, {\bf 146}, {\em Cambridge University Press, Cambridge}, 1990. 


\bibitem{Y05}
{\sc Yoshino, Y.}: 
{\it A functorial approach to modules of G-dimension zero}. 
Illinois J. Math. {\bf 49} (2) 345--367, Summer 2005. 
 
\end{thebibliography}
\end{document}